\newtheorem{theorem}{Theorem}[section]
\newtheorem{lemma}[theorem]{Lemma}
\newtheorem{proposition}[theorem]{Proposition}
\newtheorem{corollary}[theorem]{Corollary}
\theoremstyle{definition}
\newtheorem{definition}[theorem]{Definition}
\newtheorem{situation}[theorem]{Situation}
\newtheorem{example}[theorem]{Example}
\newtheorem{question}[theorem]{Question}
\theoremstyle{remark}
\newtheorem{remark}[theorem]{Remark}
\newtheorem{claim}[theorem]{Claim}
\newcommand{\sh}{\mathcal}
\newcommand{\spec}{\mathrm{Spec}\mathop{}}
\newcommand{\Proj}{\mathrm{Proj}\mathop{}}
\newcommand{\Pic}{\mathrm{Pic}}
\newcommand{\zz}{\mathbb{Z}}
\newcommand{\cc}{{\mathbb{C}}}
\newcommand{\aline}{\mathbb{A}^1}
\newcommand{\cs}{\mathbb{A}^*}
\newcommand{\suf}{+}
\newcommand{\fuf}{\natural}
\newcommand{\tfuf}{\mp}
\newcommand{\hsymb}{\star}
\newcommand{\graph}{\Gamma}
\newcommand{\blocks}{{\mathcal B}}
\newcommand{\dg}{{\mathcal E}}
\begin{document}

	\title[Small resolutions of moduli spaces of scaled curves]{Small resolutions of moduli spaces of scaled curves}
	\author[A. Zahariuc]{Adrian Zahariuc}
	\date{}
	\address{Department of Mathematics and Statistics, University of Windsor, 401 Sunset Ave, Windsor, ON, N9B 3P4, Canada}
	\email{adrian.zahariuc@uwindsor.ca}
	\keywords{scaled curve, augmented wonderful variety, small resolution, polydiagonal degeneration}
	\subjclass[2020]{14N20, 14H10, 14E15, 05B35}
	\thanks{\includegraphics[scale=0.3]{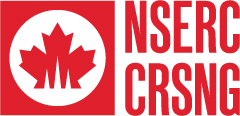} We acknowledge the support of the Natural Sciences and Engineering Research Council of Canada (NSERC), RGPIN-2020-05497. Cette recherche a \'{e}t\'{e} financ\'{e}e par le Conseil de recherches en sciences naturelles et en g\'{e}nie du Canada (CRSNG), RGPIN-2020-05497.}
	
	\maketitle
	
	\begin{abstract}
	We construct small resolutions of the moduli space $\overline{Q}_n$ of stable scaled $n$-marked lines of Ma'u--Woodward and Ziltener and of the moduli space $\overline{P}_n$ of stable $n$-marked ${\mathbb G}_a$-rational trees introduced in earlier work. The resolution of $\overline{P}_n$ is an augmented wonderful variety associated to the graphic matroid of a complete graph. The resolution of $\overline{Q}_n$ is a further blowup, also a wonderful model of an arrangement in ${\mathbb P}^{n-1}$.
	\end{abstract}
	
	\section{Introduction}\label{section: introduction}

	\subsection{Statement of the results} We work over the field ${\mathbb C}$ of complex numbers.
	
	The moduli space of stable scaled lines with marked points constructed by Ma'u and Woodward \cite[\S10]{[MW10]} and Ziltener \cite{[Zi14]}, which we will denote by $\overline{Q}_n$ (also denoted by $Q_n$, $\overline{Q}_n({\mathbb C})$, or $\overline{\mathcal M}_{n,1}({\mathbb A})$ in the literature), is a remarkable compactification of the space of $n$-tuples of distinct points on an affine line modulo translation, i.e. 
	\[ (z_1,\ldots,z_n) \sim (z'_1,\ldots,z'_n) \quad \text{if} \quad z'_1-z_1 = \cdots = z'_n-z_n. \] 
	It is reminiscent of $\overline{M}_{0,n}$, but the curves in question have some partially rigidifying structure given by an extra marked point and a section of $\overline{\omega} = {\mathbb P}(\omega \oplus {\sh O})$, which satisfy some conditions, and generalize $[1:0] \in {\mathbb P}^1$ and $dz$ respectively for smooth curves. This space is important in the context of stable gauged maps \cite{[GSW18]} and other contexts \cite{[IKLPR23]}. In \cite{[Za21]}, a simpler version of $\overline{Q}_n$ denoted $\overline{P}_n$ was considered, in which the section of $\overline{\omega}$ is replaced by a section of $\omega^\vee$, and the marked points labeled $1,\ldots,n$ may coincide, making it a ${\mathbb G}_a^{n-1}$-equivariant compactification of ${\mathbb A}^n/{\mathbb G}_a$ cf. \cite{[HT99]}, where ${\mathbb G}_a = (\cc,+)$ acts diagonally. These spaces are reviewed in \S\ref{subsection: review of Q and P}.
	
	Both $\overline{Q}_n$ and $\overline{P}_n$ are singular for $n \geq 4$, but only mildly and in codimension 3, so they are intriguingly close to being nonsingular. Thus, it is very natural to ask if there are there good ways to resolve their singularities. In this paper, we answer this question by constructing small resolutions of singularities of $\overline{Q}_n$ and $\overline{P}_n$. Here, a `small resolution of singularities' means a proper birational morphism from a smooth variety, whose exceptional locus has codimension at least $2$ (see also Remark \ref{remark: not IH small}). 
	
	The varieties which resolve $\overline{Q}_n$ and $\overline{P}_n$ can be obtained by blowing up certain linear subspaces of ${\mathbb P}^{n-1}$, all of which are intersections of some of the hyperplanes $ X_i = X_j$ and $X_1+\cdots+X_n=0$. 
	The construction is the following. For any subset $S$ of $[n]=\{1,\ldots,n\}$ with at least $2$ elements, we have a \emph{diagonal} of ${\mathbb P}^{n-1}$,
	\[ \{[X_1:\cdots:X_n] \in {\mathbb P}^{n-1}: X_i=X_j \text{ if $i,j \in S$} \}. \] 
	\emph{Polydiagonals} of ${\mathbb P}^{n-1}$ are arbitrary intersections of diagonals, including ${\mathbb P}^{n-1}$ itself. Let 
	$H$ be the hyperplane $X_1+\cdots+X_n = 0$. Let ${\mathcal D}iag$ be the set of diagonals, and
	\begin{equation}\label{equation: An-1 mod scaling}
		{\mathcal P}oly_H = \left\{ P \cap H : \quad P \neq \{[1:\cdots:1]\} \text{ is a polydiagonal} \right\}.
	\end{equation}
	Both ${\mathcal P}oly_H$ and ${\mathcal D}iag \cup {\mathcal P}oly_H$ are building sets in the sense of \cite{[DP95], [Li09]}. The former is trivially so, being closed under (non-empty) intersections, while the latter follows from the fact that any polydiagonal is the transverse intersection of the minimal diagonals which contain it.
	
	\begin{definition}\label{definition: initial definition of Wn and Tn}
	Let $W_n$ and $W'_n$ be the De Concini--Procesi wonderful models of ${\mathcal P}oly_H$ and ${\mathcal D}iag \cup {\mathcal P}oly_H$ respectively.
	\end{definition}
	
	It turns out that $W_n$ is an example of an \emph{augmented wonderful variety}, namely, the one associated to the graphic matroid of the complete graph $K_n$ and its natural representation by a positive root system of $\mathfrak{sl}_n$ (\S\ref{subsection: rearranged definition}). Augmented wonderful varieties, introduced and studied by Huh, Wang, and others \cite{[HW17], [BHMPW20a], [BHMPW20b], [EHL22], [LLPP22]}, play a significant role in the major emerging area between algebraic geometry and matroid theory. 
	
	Let $\cc$ act diagonally on $\cc^n$ and 
	\[ \mathrm{F}(n,\cc) := \{ (x_1,\ldots,x_n) \in \cc^n: x_i \neq x_j, \text{ for all }i \neq j\} \subset \cc^n. \] 
	The open strata $P_n \subset \overline{P}_n$ and $Q_n \subset \overline{Q}_n$ where the curves are smooth are isomorphic to $\cc^n/\cc$ respectively $\mathrm{F}(n,\cc)/\cc$ (see \S\ref{subsection: review of Q and P}), so $\overline{P}_n$ and $\overline{Q}_n$ are (singular) compactifications of $\cc^n/\cc$ and $\mathrm{F}(n,\cc)/\cc$ respectively. On the other hand, $W_n$ and $W'_n$ are nonsingular compactifications of $\cc^n/\cc$ and $\mathrm{F}(n,\cc)/\cc$ respectively with simple normal crossing boundary divisors (see \S\ref{subsection: rearranged definition}). 
	
	%On the other hand, $W'_n$ is a natural compactification of the complement of the $A_{n-1}$ Coxeter arrangement (the restriction of the braid arrangement in $\cc^n$ to $x_1+\cdots+x_n=0$), since there are isomorphisms $\{x_1+\cdots+x_n = 0\} \simeq {\mathbb P}^{n-1} \backslash H$ which respect the diagonals, such as $(x_1,\ldots,x_n) \mapsto [x_1+1:\cdots:x_n+1]$. Note that $\overline{Q}_n$ is a (singular) compactification of the same complement.
	
	\begin{theorem}\label{theorem: small resolution theorem}
		There exists a small resolution $\gamma:W_n \to \overline{P}_n$ if $n \geq 4$.
	\end{theorem}

	\begin{theorem}\label{theorem: small resolution theorem 2}
		There exists a small resolution $\eta:W'_n \to \overline{Q}_n$ if $n \geq 4$.
	\end{theorem}
	
	If $n \leq 3$, then $\overline{P}_n \simeq W_n$ and $\overline{Q}_n \simeq W'_n$.
	
	It is a well-known fact, which follows from work of Kapranov \cite[Theorem 4.3.3]{[Ka93]}, that $\overline{M}_{0,n}$ is the minimal wonderful model of the projectivization of the $A_{n-2}$ arrangement. Theorems \ref{theorem: small resolution theorem} and \ref{theorem: small resolution theorem 2} can be thought of as a version for scaled curves of this fact, with the notable difference that the wonderful models are instead the small resolutions of the moduli spaces. The results also fit naturally with (and refine) the construction of the augmented wonderful variety as a resolution of singularities of a matroid Schubert variety in this case, and with the spaces of flower curves and cactus flower curves recently introduced in \cite{[IKLPR23]} (Remark \ref{remark: flower curves}).
	
	\subsection{Polydiagonal degenerations and outline of proof} The idea to prove Theorems \ref{theorem: small resolution theorem} and \ref{theorem: small resolution theorem 2} is to obtain the maps to $\overline{P}_n$ and $\overline{Q}_n$ from the representability of the moduli functors, by constructing suitable families of curves over $W_n$ and $W'_n$. Hence, our arguments will focus on $W_n$ and $W'_n$ rather than $\overline{P}_n$ and $\overline{Q}_n$.
		
	For any (smooth, projective) variety $X$, we will describe two constructions:
	\begin{itemize}
		\item the \emph{polydiagonal degeneration of $X^n$}, obtained by blowing up $\aline \times X^n$ along all polydiagonals of its central fiber $\{0\} \times X^n \subset \aline \times X^n$; and
		\item the \emph{polydiagonal degeneration of the Fulton--MacPherson compactification $X[n]$}, obtained by \emph{additionally} blowing up all diagonals of $\aline \times X^n$ (that is, the direct product of $\aline$ with all diagonals of $X^n$).
	\end{itemize}
	The name is inspired by Ulyanov's polydiagonal compactification $X \langle n \rangle$ \cite{[Ul02]}. (A common generalization of the two constructions is likely possible, but we have decided against further complicating matters.) The polydiagonal degenerations are constructed using Li's theory of wonderful compactifications \cite{[Li09]}, and still parametrize degenerations of $X$ in the style of Fulton--MacPherson, by Theorem \ref{theorem: polydiagonal degeneration}. A new feature is the fact that these degenerations of $X$ are, in some suitable sense, scaled. The scaling is defined in terms of a line bundle which is roughly the $\dim X$-th root of the dualizing sheaf (\S\ref{section: naive FM spaces}).
	
	The case $\dim X \geq 2$ is not required in the proofs of Theorems \ref{theorem: small resolution theorem} and \ref{theorem: small resolution theorem 2}, though it is included for its own sake, and for potential generalizations (Remark \ref{remark: generalization to any dimension}). In fact, one could avoid the use of polydiagonal degenerations and the auxiliary curve $X$ in the proofs of the main results altogether, at the expense of a less streamlined and natural construction (see also Remark \ref{remark: final remark}).
	
	When $X=C$ is a curve, $W_n$ is a codimension $2$ subvariety of the polydiagonal degeneration of $C^n$, specifically, the fiber over $(0,p,p,\ldots,p) \in \aline \times C^n$ of the birational morphism from the polydiagonal degeneration of $C^n$ to $\aline \times C^n$, for some arbitrary $p \in C$. If we restrict the natural universal family to $W_n$, we can erase the components isomorphic to $C$, and we obtain an \emph{unstable} family of $n$-marked ${\mathbb G}_a$-rational trees (stable $n$-marked ${\mathbb G}_a$-rational trees is the name given in \cite{[Za21]} to the curves parametrized by $\overline{P}_n$). We stabilize it and obtain a map $W_n \to \overline{P}_n$ by \cite[Theorem 1.3]{[Za21]}, which turns out to be the desired small resolution. The idea for $\overline{Q}_n$ is similar, replacing the polydiagonal degeneration of  $C^n$ with that of $C [n]$.
		
	\subsection*{Structure of the paper} The paper is organized as follows.
	\begin{enumerate}
		\item [\S\ref{section: discussion}] contains a brief review of $\overline{Q}_n$ and $\overline{P}_n$, and some further discussion.
		\item [\S\ref{section: naive FM spaces}] focuses on a certain line bundle (which we call the \emph{root relative dualizing sheaf}) defined on families of Fulton--MacPherson degenerations of a variety. The `scaling' structure in \S\ref{section: polydiagonal degenerations} will be defined in terms of this line bundle. 
		\item [\S\ref{section: combinatorial language}] contains combinatorial language and preliminaries. 
		\item [\S\ref{section: polydiagonal degenerations}] discusses polydiagonal degenerations and their universal families. The main result is Theorem \ref{theorem: polydiagonal degeneration}.
		\item [\S\ref{section: small resolutions}] explains how the picture in \S\ref{section: polydiagonal degenerations} provides the desired families of curves over $W_n$ and $W'_n$, then checks that the classifying morphisms are small resolutions of $\overline{P}_n$ respectively $\overline{Q}_n$.
	\end{enumerate}

	\section{Discussion of the spaces and resolutions}\label{section: discussion}
	
	\subsection{Review of $\overline{Q}_n$ and $\overline{P}_n$}\label{subsection: review of Q and P} 
	\subsubsection*{Review of $\overline{P}_n$}
	Although $\overline{Q}_n$ predates $\overline{P}_n$, we review $\overline{P}_n$ first because it is simpler. What we denote by $\overline{P}_n$ in this paper would be denoted by $\overline{P}_{n,{\mathbb C}}$ in \cite[\S1.2]{[Za21]}. Given a noetherian $\cc$-scheme $S$ (for the purposes of this paper, it suffices to consider only the case when $S$ is a variety, although \cite[Theorem 1.3]{[Za21]} applies more generally), an $S$-point of $\overline{P}_n$ corresponds to data $(C,x_1,\ldots,x_n,x_\infty,v)$ such that:
	\begin{enumerate}
		\item $\pi:C \to S$ is a prestable (nodal) curve over $S$ of arithmetic genus $0$;
		\item $x_1,\ldots,x_n,x_\infty: S \to C$ are sections of $C \to S$, such that, for any geometric point $s$ of $S$, $x_i(s) \neq x_\infty(s)$ for $i=1,\ldots,n$ and $x_1(s),\ldots,x_n(s),x_\infty(s)$ are nonsingular points of $C_s$;
		\item\label{item: vector field} $v$ is a global section of $\omega^\vee_{C/S}(-2x_\infty(S))$, which does \emph{not} vanish at $x_1(s),\ldots,x_n(s)$ for any $s \in S$; and
		\item for any geometric point $s$ of $S$, any terminal component of $C_s$ contains at least one of $x_1(s),\ldots,x_n(s)$, while any non-terminal component of $C_s$ contains at least $3$ nodes of $C_s$ if it does not contain $x_\infty(s)$, respectively at least $2$ nodes if it contains $x_\infty(s)$.
	\end{enumerate}
	Note that $x_1(s),\ldots,x_n(s)$ are allowed to coincide. However, $x_1(s),\ldots,x_n(s)$ are forced to live on terminal components of $C_s$ (leaves of the dual tree) by condition \ref{item: vector field}. We consider the component containing $x_\infty(s)$ to be the root of the dual tree. Please see \cite[Figure 2]{[Za21]} for a picture. With the obvious changes over ${\mathbb C}$, $\overline{P}_n$ is a fine moduli space by \cite[Theorem 1.3]{[Za21]}.
	
	By \cite[Theorems 1.3 and 1.4]{[Za21]}, $\overline{P}_n$ is a ${\mathbb G}_a^n/{\mathbb G}_a$-equivariant compactification of ${\mathbb C}^n/{\mathbb C}$, where $\cc \hookrightarrow \cc^n$ diagonally. Specifically, the open subset $P_n \subset \overline{P}_n$ where the curves are irreducible is isomorphic to ${\mathbb C}^n/{\mathbb C}$, and the open immersion $\cc^n/\cc \hookrightarrow \overline{P}_n$ maps the coset of $(x_1,\ldots,x_n) \in \cc^n$ to 
	\[ \left( {\mathbb P}^1,[x_1:1],\ldots,[x_n:1],[1:0],\frac{d}{dx} \right) \] in $\overline{P}_n$. To further elucidate the `modulo translation' nature of $\overline{P}_n$, we note that data $(C,x_\infty,v)$, where $C \to S$ is a \emph{smooth} genus $0$ curve, $x_\infty$ is a section, and nonzero $v \in \Gamma(C,\omega^\vee_{C/S}(-2x_\infty(S)))$, with the natural notion of pullback, is in fact the stack $B{\mathbb G}_a$. Furthermore, considering a $\cc$-point of $\overline{P}_n$ where the curve $C$ is singular, if $Y \not\ni x_\infty$ is a terminal irreducible component of $C$ containing a unique node $q$ of $C$, then $\omega_C^\vee(-2x_\infty)$ restricts on $Y$ to $\omega_Y^\vee(-q)$, but $v$ vanishes on all non-terminal components by a simple calculation, so we can think of $v|_Y$ as a global section of $\omega_Y^\vee(-2q)$, thus giving an affine structure on $Y$ by a reiteration of the smooth case.
	
	The universal curve over $\overline{P}_n$ is isomorphic to $\overline{P}_{n+1}$. This claim is inherent in the inductive construction of $\overline{P}_n$ in \cite{[Za21]} -- see the claim in \cite[\S6.2.3]{[Za21]} that $\overline{P}_n$ is obtained by running Construction 6.4 in \emph{loc. cit.}
	
	The moduli space $\overline{P}_n$ has a natural stratification by the dual graphs of the curves. The open stratum $P_n$ was discussed above. It can be shown that the codimension $1$ strata (which will be denoted by $P_n[\rho]$ in the proof of Proposition \ref{proposition: isomorphism in codimension 2}) consist of `comb' curves (that is, curves whose dual tree is a star, with the center being the root component) with at least $2$ teeth, such that the backbone contains only the marking $x_\infty$, the remaining markings are distributed on the teeth (at least one marking on each tooth), and the vector field $v$ vanishes everywhere on the backbone, but not on the teeth -- see Figure \ref{figure: boundary of q}.
	
	\subsubsection*{Review of $\overline{Q}_n$}
	
	For more in-depth accounts of $\overline{Q}_n$, we refer the reader to \cite[\S10]{[MW10]} (where $\overline{Q}_n$ is denoted $Q_n({\mathbb C})$), as well as \cite[\S2]{[GSW17], [GSW18]} and \cite[Example 4.2.(d)]{[Wo15]} (where $\overline{Q}_n$ is denoted $\overline{\mathcal M}_{n,1}({\mathbb A})$). Our notation will be slightly non-standard compared to these sources: we will index the special marked point by $\infty$ instead of $0$, and denote the scaling (see below) by $\sigma$. 
	
	By comparison with $\overline{P}_n$, the additional feature of $\overline{Q}_n$ is that it disallows the points $x_1,\ldots,x_n$ to coincide. If some of these points collide, then the curve `sprouts out' new components in precisely the same manner it would for $\overline{{\mathcal M}}_{g,n}$ or Fulton--MacPherson spaces of curves. However, this also blows up the vector field, which needs to be replaced by a section $\sigma$ of the compactified log-(co)tangent bundle ${\mathbb P}(\omega_C \oplus {\sh O}_C)$ called `scaling'. \emph{The conventions are reversed}: $0$ vector field is infinite scaling and `infinite vector field' is $0$ scaling. 
	
	Following \cite[Definition 2.4 and Theorem 2.5]{[GSW17]}, if $S$ is a $\cc$-scheme (though, in this paper, we will only use the case when $S$ is a variety), an $S$-point of $\overline{Q}_n$ corresponds to data $(C,x_1,\ldots,x_n,x_\infty,\sigma)$ such that:
	\begin{enumerate}
		\item $\pi:C \to S$ is a prestable curve over $S$ of arithmetic genus $0$;
		\item $x_1,\ldots,x_n,x_\infty: S \to C$ are smooth pairwise disjoint sections of $C \to S$;
		\item\label{item: scaling} $\sigma$ is a global section of the ${\mathbb P}^1$-bundle ${\mathbb P}(\omega_{C/S} \oplus {\sh O}_C) = {\mathbb P}({\sh O}_C \oplus \omega^\vee_{C/S}) \to C$ such that $1/\sigma$ is a regular local section of $\omega_C^\vee(-2x_\infty(S))$ in a neighbourhood of $x_\infty(S)$;
		\item for any geometric point $s$ of $S$, the following conditions are satisfied:
		\begin{enumerate}
			\item The scaling $\sigma(s)$ is infinite at $x_\infty(s)$, and finite (possibly $0$) at $x_1(s),\ldots,x_n(s)$.
			\item The scaling $\sigma_s$ is monotone on $C_s$ in the following sense: for each terminal component $Y$ of $C_s$, there is a canonical non-self-crossing path of components $Y_0,\ldots,Y_k = Y$ starting with the root component $Y_0$ (the one containing $x_\infty$), such that the scaling $\sigma_s$ is finite and non-zero on at most one of these components -- let us call it the \emph{transition} component $Y_\alpha$ -- and the scaling is infinite on $Y_i$ for all $i<\alpha$, respectively $0$ on $Y_i$ for all $i>\alpha$.
			\item The natural stability condition equivalent to the absence of nontrivial automorphisms holds: any irreducible component of $C_s$ either contains at least $3$ special points (markings or nodes of $C_s$), or contains at least $2$ special points and has non-degenerate (finite and non-zero) scaling.
		\end{enumerate} 
	\end{enumerate}
	(The second part of condition \ref{item: scaling} is required e.g. to rule out the case when $\sigma$ has two isolated simple poles.) Thus, if $(C,x_1,\ldots,x_n,x_\infty,\sigma)$ corresponds to a $\cc$-point of $\overline{Q}_n$, there are three types of irreducible components of $C$: components with infinite scaling, (nonzero) finite scaling, and $0$ scaling. Unless explicitly stated otherwise, by finite scaling, we mean non-degenerate scaling (not just finite, but also nonzero). 
	
	Let $\mathrm{F}(n,\cc) = \{ (x_1,\ldots,x_n) \in \cc^n: x_i \neq x_j, \text{ for all }i \neq j\} \subset \cc^n$. The diagonal action of $(\cc,+)$ on $\cc^n$ restricts to an action on $\mathrm{F}(n,\cc)$, and we obtain an open immersion $\mathrm{F}(n,\cc)/\cc \hookrightarrow \overline{Q}_n$ which maps the coset of $(x_1,\ldots,x_n)$ to 
	\[ ({\mathbb P}^1,[x_1:1],\ldots,[x_n:1],[1:0],dx). \] 
	Its image is the open stratum $Q_n \subset \overline{Q}_n$ consisting of smooth curves. It can be shown that there are two types of codimension $1$ strata on $\overline{Q}_n$ -- see Figure \ref{figure: boundary of q}. One (which will be denoted by $Q_n[\rho]$ in the proof of Proposition \ref{proposition: isomorphism in codimension 2 for q}) is completely analogous to the codimension $1$ strata of $\overline{P}_n$ discussed above, with the sole additional requirement that the marking are pairwise distinct; note that the root/backbone has infinite scaling, and the teeth have finite scaling. In the other type of codimension 1 stratum (which will be denoted by $Q_n[S]$ in the proof of Proposition \ref{proposition: isomorphism in codimension 2 for q}), the curves have two irreducible components: the component containing $x_\infty$ has finite scaling, while the other component has $0$ scaling and contains at least two of the marked points $x_1,\ldots,x_n$.
	
	\begin{figure}[h]	
	\begin{center}
		\begin{tikzpicture}[scale = 0.5]
			\draw (0,0) circle (1); \draw[fill=white] (0,-1) circle (3pt) node[anchor = south] {$\infty$};
			\draw (1.3,0.5) circle (0.4); \draw[fill=black] (1.7,0.5) circle (3pt); \draw[fill=black] (1.5,0.82) circle (3pt);
			\draw (-1.3,0.5) circle (0.4);  \draw[fill=black] (-1.7,0.5) circle (3pt); \draw[fill=black] (-1.3,0.1) circle (3pt);
			\draw[fill=black] (-1.3,0.9) circle (3pt); 
			\draw (1.3,-0.5) circle (0.4); \draw[fill=black] (1.3,-0.9) circle (3pt); \draw[fill=black] (-1.3,-0.9) circle (3pt);
			\draw (-1.3,-0.5) circle (0.4);
			\draw (0,1.4) circle (0.4); \draw[fill=black] (0,1.8) circle (3pt);
			
			\draw (6,0) circle (1); \draw[fill=white] (6,-1) circle (3pt) node[anchor = south] {$\infty$};
			\draw[fill=black] (5.2,0.6) circle (3pt); \draw[fill=black] (6,1) circle (3pt); \draw[fill=black] (5,0) circle (3pt);
			\draw[fill=black] (5.2,-0.6) circle (3pt); \draw[fill=black] (5.2,-0.6) circle (3pt); \draw[fill=black] (6.8,-0.6) circle (3pt);
			\draw (7.5,0.7) circle (0.67); \draw[fill=black] (7.5,1.37) circle (3pt); \draw[fill=black] (7.5,0.03) circle (3pt);  \draw[fill=black] (8.17,0.7) circle (3pt);
			
			\node at (-5,0) {finite scaling}; \draw [->,dotted] (-3,0) to (-2,0.5); \draw [->,dotted] (-3,0) to (-2,-0.5);
			
			\node at (-4,2) {infinite scaling}; \draw [->,dotted] (-2,2) to (-0.7,0.8);
			
			\node at (12,-0.5) {finite scaling}; \draw [->,dotted] (10,-0.5) to (7,-0.5); 
			\node at (12,0.5) {zero scaling}; \draw [->,dotted] (10,0.5) to (8.4,0.5);
		\end{tikzpicture} 
	\caption{The two types of boundary divisors of $\overline{Q}_n$. The picture on the left also corresponds to a boundary divisor of $\overline{P}_n$.}
	\label{figure: boundary of q}
	\end{center}
	\end{figure}
	
	The space $\overline{Q}_n$ is constructed as a projective variety in \cite[\S10]{[MW10]}. The equivalence with the modular point of view is stated in \cite[Example 4.2.(d)]{[Wo15]}, a point of view discussed in more detail in \cite[\S2]{[GSW17], [GSW18]}.
	
	Both $\overline{Q}_n$ and $\overline{P}_n$ are normal local complete intersection (irreducible) projective varieties of dimension $n-1$, which are singular in codimension $3$ for $n \geq 4$. We will justify or reference the less obvious facts in the previous sentence only as needed.

	\subsubsection*{Relation between $\overline{Q}_n$ and $\overline{P}_n$}
	
	Given a scaled curve $(C,x_1,\ldots,x_n,x_\infty,\sigma) \in \overline{Q}_n(\cc)$, there is a morphism $f:C \to C'$ which contracts all $0$-scaling components of $C$, with a one-sided inverse $C' \hookrightarrow C$. If $D \subset C'$ is the image of the components contracted by $f$, then $\omega_C|_{C'} \simeq \omega_{C'}(D)$ and, in a Zariski neighbourhood of $D$ in $C'$, $\sigma|_{C'}$ is a meromorphic section of $\omega_C|_{C'}$ with simple poles at $D$, and it follows that $v:=1/(\sigma|_{C'})$ can be regarded as a global section of $\omega^\vee_C$. Then, it is easy to see that $(C',f(x_1),\ldots,f(x_n),f(x_\infty),v) \in \overline{P}_n(\cc)$, so we have defined a function $\overline{Q}_n(\cc) \to \overline{P}_n(\cc)$. This function is indeed a morphism, by Proposition \ref{proposition: morphism from Q to P} below. The proposition will not be used in the paper, so we will sketch the proof, which is rather technical, in Appendix \ref{appendix: contracting components}.
	
	\begin{proposition}\label{proposition: morphism from Q to P}
		There exists a morphism $\tau:\overline{Q}_n \to \overline{P}_n$ which induces the function above on $\cc$-points. 
	\end{proposition}
	
	\subsubsection*{Relations to other spaces}
	
	Some notable relations between $\overline{Q}_n$ or $\overline{P}_n$ and other known spaces are summarized below.
	
	\begin{remark}\label{remark: relation to spaces 1}
	By \cite[Theorem 1.4]{[Za21]}, $\overline{P}_n$ is a degeneration of the Losev--Manin space $\overline{L}_n$ \cite{[LM00]} compatible with the natural actions of ${\mathbb G}_a^{n-1}$ and ${\mathbb G}_m^{n-1}$ respectively. This relies on the degeneration of ${\mathbb G}_m$ to ${\mathbb G}_a$ by $\alpha \to 0$ in $x *_\alpha y = x+y+\alpha xy$ ($x,y \neq \nicefrac{-1}{\alpha}$). In \cite{[IKLPR23]}, it is shown that $\overline{Q}_n$ is a degeneration of $\overline{M}_{0,n+2}$.
	\end{remark}
	
	\begin{remark}\label{remark: relation to spaces 2}
	This remark and the follow-up Remark \ref{remark: flower curves} are based on discussions with Joel Kamnitzer. Let $\overline{\mathfrak{t}}_n$ be the moduli space of flower curves in \cite{[IKLPR23]}, which is also a matroid Schubert variety. Recall that $\overline{P}_{n+1}$ is the universal curve over $\overline{P}_n$ (see the review of $\overline{P}_n$ above). Composing, we obtain maps $\delta_{ij}:\overline{P}_n \to \overline{P}_2 \simeq {\mathbb P}^1$, which forget all markings except any pair $i \neq j$, and stabilize the resulting curves. Taken together, these give a map $ (\delta_{ij})_{i \neq j}: \overline{P}_n \to ({\mathbb P}^1)^{n^2-n} $, whose image is easily checked to be precisely $\overline{\mathfrak{t}}_n$, for instance, by restricting to the open stratum ${\mathbb C}^n/{\mathbb C}$. Thus, we obtain a morphism
	\[ \delta: \overline{P}_n \to \overline{\mathfrak t}_n. \]
	Geometrically, this operation contracts the `interior' of a curve corresponding to some ${\mathbb C}$-point of $\overline{P}_n$ to a point: the leaves turn into petals and the tree shrinks to a flower. For instance, it can be shown that the fiber of $\delta$ over $\infty^{n^2-n} \in \overline{\mathfrak t}_n$ is a Weil divisor on $\overline{P}_n$ isomorphic to $\overline{M}_{0,n+1}$. 
		
	In \cite{[IKLPR23]}, it is shown that there is a birational morphism $\overline{Q}_n \to \overline{F}_n$, where $\overline{F}_n$ is the space of cactus flower curves introduced in \emph{loc. cit.}
	\end{remark}
	
	\subsection{The wonderful models $W_n$ and $W'_n$}\label{subsection: rearranged definition} 
	It is convenient to slightly rearrange Definition \ref{definition: initial definition of Wn and Tn}. If we embed $\cc \hookrightarrow \cc^n$ diagonally and consider the isomorphism 
	$ \cc^n \cong \cc \oplus \cc^\perp \cong \cc \oplus \cc^n/\cc $ relative to the standard inner product $\langle u,v \rangle = \sum_{i=1}^n u_i v_i$ and the corresponding isomorphism ${\mathbb P}^{n-1} \cong {\mathbb P}(\cc \oplus \cc^n/\cc)$, then the hyperplane $H = \{X_1+\cdots+X_n = 0\}$ is simply ${\mathbb P} ( 0 \oplus \cc^n / \cc) $, and the arrangements considered in \S\ref{section: introduction} correspond to
	\begin{equation} 
	\begin{aligned}
	{\mathcal D}iag &\simeq \{ {\mathbb P}( {\mathbb C} \oplus D/{\mathbb C} ) : \text{$D$ is a diagonal of ${\mathbb C}^n$} \} \quad \text{and} \\ 
	{\mathcal P}oly_H &\simeq \{ {\mathbb P}( 0 \oplus P/{\mathbb C} ) : \text{$P$ is a polydiagonal of ${\mathbb C}^n$, }\dim P \geq 2 \}.
	\end{aligned}
	\end{equation}
	Note that ${\mathbb P}^{n-1} \setminus H$ is naturally identified with $\cc^n / \cc$, so the complements of the building sets ${\mathcal P}oly_H$ and ${\mathcal D}iag \cup {\mathcal P}oly_H$ used to construct $W_n$ and $W'_n$ are naturally identified with $\cc^n/\cc$ and $\mathrm{F}(n,\cc)/\cc$ respectively. 

	Now we can also justify the claim that $W_n$ is an augmented wonderful variety. In the setup of \cite[Remark 2.13]{[BHMPW20a]}, let us take $V \subset \cc^{n \choose 2}$ cut out by 
	\[ x_{\{a,b\}} - x_{\{a,c\}} + x_{\{b,c\}} = 0 \text{ for all } 1 \leq a<b<c \leq n. \] 
	Clearly, $V \cong {\mathbb C}^n/{\mathbb C}$, with ${\mathbb C} \hookrightarrow {\mathbb C}^n$ diagonally. With the notation in loc. cit., $M$  is the graphic matroid of $K_n$, the linear subspaces $H_F$ correspond to elements of ${\mathcal P}oly_H$, and hence $X_V = W_n$. The specific setup considered essentially corresponds to representing $M$ by a positive root system of $\mathfrak{sl}_{n}$.
	
	By \cite[Theorem 1.3]{[Li09]}, $W'_n$ can be constructed by first blowing up the elements of ${\mathcal P}oly_H$ and then (the proper transforms of) the elements of ${\mathcal D}iag$, so there is a natural birational morphism $W'_n \to W_n$.
		
	\subsection{Further remarks and questions} To conclude the discussion of how the various spaces mentioned above relate to each other, we note the following.
	
	\begin{remark}\label{remark: flower curves}
		By Remark \ref{remark: relation to spaces 2} and Theorem \ref{theorem: small resolution theorem}, there are morphisms
		\begin{equation}\label{equation: factorization of resolution} W_n \xrightarrow{\gamma} \overline{P}_n \xrightarrow{\delta} \overline{\mathfrak t}_n, \end{equation}
	whose composition $W_n \to \overline{\mathfrak{t}}_n$ must equal the (special case of the) known resolution of the matroid Schubert variety by the augmented wonderful variety \cite{[HW17]}, since all spaces involved are compactifications of ${\mathbb C}^n/{\mathbb C}$ and all maps involved are the identity on these open subsets. Thus, Theorem \ref{theorem: small resolution theorem} provides a factorization of $W_n \to \overline{\mathfrak{t}}_n$ by inserting the moduli space $\overline{P}_n$ in the middle. 
	
	Similarly, for distinct marked points, there is a composition
	$$ W'_n \xrightarrow{\eta} \overline{Q}_n \to \overline{F}_n, $$ 
	where $\overline{F}_n$ is the space of cactus flower curves in \cite{[IKLPR23]}, cf. Remark \ref{remark: relation to spaces 2}.
	\end{remark} 
	
	\begin{remark}
	A further remark regarding the spaces in \eqref{equation: factorization of resolution}: both $\overline{\mathfrak t}_n$ and $\overline{P}_n$ deform to $\overline{L}_n$ (\cite{[IKLPR23]} and \cite{[Za21]}), but $W_n$ does not deform to $\overline{L}_n$ for $n \geq 4$, because $W_n$ and $\overline{L}_n$ are smooth but not diffeomorphic. For instance, it can be checked that $\mathrm{rank}\mathop{}H^2(W_n,\zz) = B_n-1 \neq 2^n-n-1 = \mathrm{rank}\mathop{} H^2(\overline{L}_n,\zz)$ for $n \geq 4$, where $B_n$ is the $n$-th Bell number. Thus, although $W_n$ fixes the singularities of both $\overline{\mathfrak t}_n$ and $\overline{P}_n$, it loses at least one of their interesting properties. 
	\end{remark}
	
	\begin{question}
		Are there factorizations similar to \eqref{equation: factorization of resolution} for other matroids?
	\end{question}
	
	Theorems \ref{theorem: small resolution theorem} and \ref{theorem: small resolution theorem 2} do not provide a clear method to read the local geometry of the small resolutions from combinatorial data. However, the case $n=4$ discussed in Example \ref{example: case n=4} below is quite simple.
	
	\begin{example}\label{example: case n=4}
		As explained at the end of \cite{[MW10]}, the threefold $\overline{Q}_4$ has $3$ ordinary double points, namely, the points where the curves look like this 
		\begin{center}
		\begin{tikzpicture}[scale = 0.5]
			\draw (0,0) circle (1); \draw[fill=white] (0,-1) circle (3pt) node[anchor = south] {$\infty$};
			\draw (-2,0) circle (1); 
			\draw (2,0) circle (1);
			\draw (3.3,0.5) circle (0.4); \draw[fill=black] (3.7,0.5) circle (3pt) node[anchor = west] {$k$};
			\draw (3.3,-0.5) circle (0.4); \draw[fill=black] (3.7,-0.5) circle (3pt) node[anchor = west] {$\ell$};
			\draw (-3.3,0.5) circle (0.4); \draw[fill=black] (-3.7,0.5) circle (3pt) node[anchor = east] {$i$};
			\draw (-3.3,-0.5) circle (0.4); \draw[fill=black] (-3.7,-0.5) circle (3pt) node[anchor = east] {$j$};
		\end{tikzpicture} 
		\end{center}
		cf. \cite[Figure 19]{[MW10]}, and, in fact, the same holds for $\overline{P}_4$. The exceptional set of $W_4 \to \overline{P}_4$ (respectively $W'_4 \to \overline{Q}_4$) is the intersection of the preimage by $W_4 \to {\mathbb P}^3$ (respectively $W'_4 \to {\mathbb P}^3$) of the $3$ points of the form $[\pm 1:\pm 1:\pm 1: \pm 1]$ in $H$ (the $3$ dots in Figure \ref{figure: 6 lines} which are intersections of only $2$ lines) with the proper transform of $H$. Our resolutions are locally a usual small resolution of $xy=zt$.
	\end{example}
		
		\begin{figure}[h]
			\begin{center}
				\begin{tiny}
					\begin{tikzpicture}
						\def\xa{-0.5}; \def\ya{0}; \def\xb{1}; \def\yb{0}; \def\xc{0}; \def\yc{1}; \def\xd{0.8}; \def\yd{0.8};
						
						\def\rr{-3}; \def\ss{2};
						\def\xp{\xa}; \def\yp{\ya}; \def\xq{\xb}; \def\yq{\yb};
						\draw  (\rr*\xp+\xq-\rr*\xq,\rr*\yp+\yq-\rr*\yq) -- (\ss*\xp+\xq-\ss*\xq,\ss*\yp+\yq-\ss*\yq);
						
						\def\rr{-1.5}; \def\ss{1.5};
						\def\xp{\xa}; \def\yp{\ya}; \def\xq{\xc}; \def\yq{\yc};
						\draw  (\rr*\xp+\xq-\rr*\xq,\rr*\yp+\yq-\rr*\yq) -- (\ss*\xp+\xq-\ss*\xq,\ss*\yp+\yq-\ss*\yq);
						
						\def\rr{-1}; \def\ss{1.5};
						\def\xp{\xa}; \def\yp{\ya}; \def\xq{\xd}; \def\yq{\yd};
						\draw  (\rr*\xp+\xq-\rr*\xq,\rr*\yp+\yq-\rr*\yq) -- (\ss*\xp+\xq-\ss*\xq,\ss*\yp+\yq-\ss*\yq);
						
						\def\rr{-1}; \def\ss{1.5};
						\def\xp{\xb}; \def\yp{\yb}; \def\xq{\xc}; \def\yq{\yc};
						\draw  (\rr*\xp+\xq-\rr*\xq,\rr*\yp+\yq-\rr*\yq) -- (\ss*\xp+\xq-\ss*\xq,\ss*\yp+\yq-\ss*\yq);
						
						\def\rr{-2}; \def\ss{1.5};
						\def\xp{\xb}; \def\yp{\yb}; \def\xq{\xd}; \def\yq{\yd};
						\draw  (\rr*\xp+\xq-\rr*\xq,\rr*\yp+\yq-\rr*\yq) -- (\ss*\xp+\xq-\ss*\xq,\ss*\yp+\yq-\ss*\yq);
						
						\def\rr{-5}; \def\ss{2};
						\def\xp{\xc}; \def\yp{\yc}; \def\xq{\xd}; \def\yq{\yd};
						\draw  (\rr*\xp+\xq-\rr*\xq,\rr*\yp+\yq-\rr*\yq) -- (\ss*\xp+\xq-\ss*\xq,\ss*\yp+\yq-\ss*\yq);
						
						\draw[fill=white] (\xa,\ya) circle (2pt) node[anchor = north] {$234|1$}; \draw[fill=white] (\xb,\yb) circle (2pt) node[anchor = north] {$134|2$}; \draw[fill=white] (\xc,\yc) circle (2pt) node[anchor = south] {$124|3$}; \draw[fill=white] (\xd,\yd) circle (2pt) node[anchor = south] {$123|4$}; 
						
						\draw[fill=white] (0.43,0.57) circle (2pt) node[anchor = east] {$14|23$}; \draw[fill=white] (0.5,2) circle (2pt) node[anchor = east] {$13|24$}; \draw[fill=white] (4,0) circle (2pt) node[anchor = south] {$12|34$};
						
						\def\px{7}; \def\py{0.5}; \def\ax{0}; \def\ay{2};  \def\bx{1}; \def\by{-1};
						
						\draw (\px,\py) -- (\px+\ax,\py+\ay) -- (\px+\ax+\bx,\py+\ay+\by) -- (\px+\bx,\py+\by) -- (\px,\py);
						
						\def\bx{\px-1}; \def\by{\py+0.5};
						
						\def\rx{0.3}; \def\ry{1.2}; 
						\draw (\bx,\by) -- node[anchor = south] {$124$} (\px+\rx,\py+\ry);
						\draw[fill=white] (\px+\rx,\py+\ry) circle (1pt);
						
						\def\rx{0.7}; \def\ry{0.5}; 
						\draw (\bx,\by) -- node[anchor = south] {$123$} (\px+\rx,\py+\ry);
						\draw[fill=white] (\px+\rx,\py+\ry) circle (1pt);
						
						\def\rx{0.2}; \def\ry{0.2}; 
						\draw (\bx,\by) -- node[anchor = north] {$234$} (\px+\rx,\py+\ry);
						\draw[fill=white] (\px+\rx,\py+\ry) circle (1pt);
						
						\def\rx{0.8}; \def\ry{-0.5}; 
						\draw (\bx,\by) -- node[anchor = north] {$134$} (\px+\rx,\py+\ry);
						\draw[fill=white] (\px+\rx,\py+\ry) circle (1pt);
						
						\draw[fill=white] (\bx,\by) circle (2pt) node[anchor = east] {$1234$};
						
						\node at (\px+1.2,\py-0.5) {$H$};
					\end{tikzpicture}
					\caption{For $n=4$: the arrangement ${\mathcal P}oly_H$ if $H$ is the screen or paper (left); the lines and point in ${\mathcal D}iag$ (right).}
					\label{figure: 6 lines}
				\end{tiny}
			\end{center}
		\end{figure}

	The geometry of the resolutions and their fibers will be studied in more detail in future work. For instance, the fibers of $\gamma:W_n \to \overline{P}_n$ will be shown to be non-trivial precisely over points of $\overline{P}_n$ where the corresponding curve is isomorphic to that in Example \ref{example: case n=4} (and the $n$ marked points are distributed among the $4$ terminal components in any possible way) plus further degenerations of such configurations. More notably, we expect the fibers of $\gamma$ to be isomorphic to fibers of the map from Ulyanov's polydiagonal compactification to the Fulton--MacPherson compactification \cite[\S6]{[Ul02]}, though fitting together in families in rather new ways. 
	
	\begin{remark}\label{remark: not IH small}
	In this paper, we are using the phrase `small resolution of singularities' in the classical sense (\S\ref{section: introduction}). There is a more restrictive notion of small resolution which, for instance, is used in intersection homology \cite[\S6.2]{[GM83]}. Our resolutions are not small in this sense if $n \geq 7$. This is proved in Remark \ref{remark: not IH small proof}.
	\end{remark}
		
	\section{Naive Fulton--MacPherson degeneration spaces}\label{section: naive FM spaces}
	
	\subsection{Naive FM spaces} Let $X$ be a smooth projective variety of dimension $d>0$. The constructions in this section are related to the Fulton--MacPherson compactification $X[n]$ and its `universal family' $X[n]^+$. The reader is referred to \cite{[FM94]} for their construction and properties.
	
	Kim, Kresh, and Oh introduced the notion of \emph{Fulton--MacPherson degeneration space} (or simply \emph{FM space}) of $X$ over an arbitrary base \cite[Definition 2.1.1]{[KKO14]} to construct an alternate compactification of the space of maps from curves to $X$. FM spaces are families which \'e{tale} locally on the base look like (pullbacks of) $X[n]^+ \to X[n]$ with the marked points ignored. Please see \cite[Figure 5]{[FM94]} or \cite[Figure 1]{[KKO14]} for pictures of the fibers. Such spaces will come up in \S\ref{section: polydiagonal degenerations}, though, for our purposes, we may work with the more elementary class of degenerations below.
	
	Let $S$ be a smooth quasi-projective variety. Naive FM spaces of $X$ over $S$ are obtained starting from the trivial family $S \times X \to S$, by repeatedly blowing up smooth sections of the family above smooth divisors on the base. (We say that a section of a morphism is \emph{smooth} if its image is contained in the open subset of the source of the morphism where the morphism is a smooth.) 
		
	\begin{definition}\label{definition: naive FM space}
		A pair of morphisms $(W \xrightarrow{\pi} S,W \xrightarrow{\xi} X)$ is a \emph{naive FM space} of $X$ over $S$ if there exists a sequence of birational morphisms
		\[ W = W_k \xrightarrow{\beta_k} W_{k-1} \xrightarrow{\beta_{k-1}} \cdots \xrightarrow{\beta_1} W_0 = S \times X \]
		and, for each $i=0,\ldots,k-1$, a smooth divisor $D_i \subset S$ and a smooth section $\sigma_i: S \to W_i$ of $W_i \to S$, such that $\beta_{i+1}$ is the blowup of $W_i$ at $\sigma_i(D_i)$, and $\pi$ and $\xi$ are the compositions of $\beta_1 \cdots \beta_k$ with the projections of $S \times X$ to its two factors.
	\end{definition}
	
	\begin{lemma}\label{lemma: only flatness}
		If $(W \xrightarrow{\pi} S, W \xrightarrow{\xi} X)$ is a naive FM space of $X$ over $S$, then $\pi$ is flat, and the fiber of $\pi$ over any ${\mathbb C}$-point of $S$ is isomorphic to a fiber of $X[n]^+ \to X[n]$ for some suitably chosen $n$.
	\end{lemma}
	
	\begin{proof}
		Continuing to use the notation in Definition \ref{definition: naive FM space}, let $\pi_i:W_i \to S$ be the composition of  $\beta_1 \cdots \beta_i$ with the projection $S \times X \to S$, for $i=0,\ldots,k$. We will write $W_{i,s} := \pi_i^{-1}(s)$, for any $s \in S$. Let ${\sh N}$ (respectively ${\sh N}'$) the normal bundle of the closed immersion by $\sigma_i$ of $D_i$ into $W_i$ (respectively $\sigma_i^{-1}(D_i)$), and ${\sh T} = {\sh T}_{W_i/S}$ the relative tangent sheaf of $W_i$ over $S$. Note that ${\sh N}' \simeq \sigma_i^*{\sh T}_{W_i/S}$, and we have a short exact sequence
		\begin{equation}\label{equation: ses which will split locally}
			0 \to \sigma_i^*{\sh T} \to {\sh N} \to {\sh O}_S(D_i)|_{D_i} \to 0.
		\end{equation}
		Recall that $W_{i+1}$ is the blowup of $W_i$ along $\sigma_i(D_i)$. Then, $\pi_{i+1}^{-1}(D_i)$ is the union of $R$, the proper transform of $\pi_i^{-1}(D_i)$ relative to $\beta_{i+1}$, or equivalently, its blowup along $\sigma_i(D_i)$, with the exceptional divisor $P = {\mathbb P}{\sh N}$ of $\beta_{i+1}$. Moreover, $P \cap R = E:= {\mathbb P}{\sh N}'$, the exceptional divisor of $R \to \pi_i^{-1}(D_i)$. 
		
		Then, for any $s \in D_i$, $W_{i+1,s}$ is the union of the blowup $\mathrm{Bl}_{\sigma_i(s)} W_{i,s}$ of $W_{i,s}$ at $\sigma_i(s)$ with the projective space $P_s$. More specifically, $W_{i+1,s}$ is the gluing (pushout) of $\mathrm{Bl}_{\sigma_i(s)} W_{i,s}$ with $P_s$ along $E_s$, with $E_s$ embedded inside $\mathrm{Bl}_{\sigma_i(s)} W_{i,s}$ as the exceptional divisor, and inside $P_s$ as the hyperplane $ {\mathbb P}{\sh N}'_s \subset P_s$. On the other hand, if $s \notin D_i$, we obviously have $W_{i+1,s} \cong W_{i,s}$.  
		Then, using the description of the fibers of $X[n]^+ \to X[n]$ in \cite[p. 194]{[FM94]}, it follows by induction on $i$ that the fibers of $W_i \to S$ are isomorphic to such fibers, for suitably chosen $n$.
		
		Finally, the flatness of $\pi$ follows from the result above concerning the fibers and the `miracle flatness' theorem \cite[Theorem 23.1]{[Ma87]}.
	\end{proof}
	
	\subsection{The root relative dualizing sheaf} If $(W \xrightarrow{\pi} S, W \xrightarrow{\xi} X)$ is a naive FM space, then it can be checked inductively with standard arguments that
	\begin{equation} \Pic (W) = \Pic (S \times X) \oplus F_{W,X/S}, \end{equation}
	where $F_{W,X/S}$ is a free abelian group freely generated by the irreducible components of the exceptional locus of $W \to S \times X$, and $\Pic(S \times X)$ is identified with its image by $(\pi,\xi)^*$. Note also that $\pi$ is lci because $W$ and $S$ are smooth varieties.
	
	\begin{lemma}\label{lemma: defining property of root}
		For any naive FM space $(W \xrightarrow{\pi} S, W \xrightarrow{\xi} X)$, there exists a unique line bundle $\sqrt[d]{\omega}_{W,X/S} \in F_{W,X/S}$ such that 
		\begin{equation}\label{equation: defining equation of root repeated}  \omega_{W/S} = \xi^*\omega_X \otimes \sqrt[d]{\omega}_{W,X/S}^{\otimes d} . \end{equation}
		Moreover, if $(W \xrightarrow{\pi} S, W \xrightarrow{\xi} X)$ is a naive FM space, $D \subset S$ is a smooth divisor, $\sigma: S \to W$ is a smooth section of $\pi$, and $\beta:W' \to W$ is the blowup of $W$ at $\sigma(D)$ with exceptional divisor $E$, then $(W' \xrightarrow{\pi \beta} S, W' \xrightarrow{\xi \beta} X)$ is a naive FM space, and
		\begin{equation}\label{equation: inductive definition of root}
			\sqrt[d]{\omega}_{W',X/S} = {\sh O}_{W'}(E) \otimes \beta^* \sqrt[d]{\omega}_{W,X/S}.
		\end{equation}
	\end{lemma}
	
	\begin{proof} Uniqueness is clear, since $F_{W,X/S}$ is torsion-free. To prove existence, we turn $\sqrt[d]{\omega}_{S \times X,X/S} = {\sh O}_{S \times X}$ and \eqref{equation: inductive definition of root} into the inductive definition of the desired line bundle, and check \eqref{equation: defining equation of root repeated}. Since ${\sh O}_{W'}(E) \in F_{W',X/S}$ and $\beta^*(F_{W,X/S}) \subseteq F_{W',X/S}$, we have $\sqrt[d]{\omega}_{W',X/S} \in F_{W',X/S}$. By the well-known 
	$ \omega_W = \omega_{W/S} \otimes \pi^*\omega_S$ and $\omega_{S \times X} = \omega_S \boxtimes \omega_X$,
	formula \eqref{equation: defining equation of root repeated} can be rearranged equivalently as $\omega_W = (\pi,\xi)^*\omega_{S \times X} \otimes \sqrt[d]{\omega}_{W,X/S}^{\otimes d}$, which follows inductively from \eqref{equation: inductive definition of root} and $K_{W'} = \beta^*K_W + dE$. 
	\end{proof}
	
	\begin{definition}
	The line bundle $\sqrt[d]{\omega}_{W,X/S}$ in Lemma \ref{lemma: defining property of root} is the \emph{root relative dualizing sheaf} of $(W \to S, W \to X)$. If $d=1$, we write $\omega_{W,X/S}$ instead of $\sqrt[1]{\omega}_{W,X/S}$.
	\end{definition}
	
	In some sense, $\sqrt[d]{\omega}$ is simply the line bundle associated to the exceptional divisor of $W \to S \times X$, though components of the exceptional locus often need to be taken with multiplicities greater than 1. (There is also a distinguished section $\epsilon$ of $\sqrt[d]{\omega}$. We will never use it in any arguments, though we will make another parenthetical reference to it later.) It is almost certain that the root relative dualizing sheaf can be defined for FM spaces in the sense of \cite{[KKO14]}, although the required verifications seem not entirely trivial, and will be discussed elsewhere. 
		
	\begin{lemma}\label{lemma: lifting vector fields naively}
		Let $(W \xrightarrow{\pi} S, W \xrightarrow{\xi} X)$ be a naive FM space, $D \subset S$ a smooth divisor, $\sigma: S \to W$ a smooth section of $\pi$, and $\beta:W' \to W$ the blowup of $W$ at $\sigma(D)$ with exceptional divisor $E$. Then, for any
		$$ \psi \in \mathrm{Hom}_{{\sh O}_W} \left( \sqrt[d]{\omega}_{W,X/S}, {\sh I}_{\sigma(D),W} \right), $$ 
		there exists a unique
		$$ \psi' \in \mathrm{Hom}_{{\sh O}_{W'}} \left( \sqrt[d]{\omega}_{W',X/S}, {\sh O}_{W'} \right) $$ 
		such that the restrictions of $\psi$ and $\psi'$ to $W \backslash \sigma(D)$ and $W' \backslash E$ respectively map to each other under the isomorphism $W \backslash \sigma(D) \cong W' \backslash E$. Moreover, the restriction of $\psi'$ to $E$ is equal to $0$ if and only if $\mathrm{Im}(\psi) \subseteq {\sh I}_{\sigma(D),W}^2$.
	\end{lemma}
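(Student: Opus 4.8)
The plan is to obtain $\psi'$ directly from the inductive description \eqref{equation: inductive definition of root} of the root relative dualizing sheaf together with the universal property of the blowup. Write $\beta:W'\to W$ for the blowup and $Z:=\sigma(D)$, a smooth closed subvariety of $W$ of codimension $d+1\geq 2$, so that $E$ is a $\mathbb{P}^{d}$-bundle $\pi_E:E\to Z$ and $\sh O_{W'}(-E)=\sh I_{Z,W}\cdot\sh O_{W'}$ is the image of the canonical surjection $\beta^*\sh I_{Z,W}\twoheadrightarrow\sh O_{W'}(-E)$. By \eqref{equation: inductive definition of root}, $\sqrt[d]{\omega}_{W',X/S}=\sh O_{W'}(E)\otimes\beta^*\sqrt[d]{\omega}_{W,X/S}$, so giving $\psi'$ amounts to giving a map $\bar\psi:\beta^*\sqrt[d]{\omega}_{W,X/S}\to\sh O_{W'}(-E)$; I would take $\bar\psi$ to be the composite of $\beta^*\psi$ with the surjection $\beta^*\sh I_{Z,W}\twoheadrightarrow\sh O_{W'}(-E)$, and set $\psi':=\bar\psi\otimes\mathrm{id}_{\sh O_{W'}(E)}$. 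Over $W'\setminus E\cong W\setminus Z$ the line bundle $\sh O_{W'}(E)$ is canonically trivial, $\beta$ is an isomorphism, and the above surjection is the identity, so $\psi'$ restricts to $\psi$ there. Uniqueness is then automatic: $W'\setminus E$ is dense in the variety $W'$ and $\sqrt[d]{\omega}_{W',X/S}$ is a line bundle, so a homomorphism out of it into $\sh O_{W'}$ is determined by its restriction to any dense open.

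For the ``moreover'', since $\sh O_{W'}(E)$ is invertible, $\psi'|_E=0$ if and only if $\bar\psi|_E=0$, and I would analyze $\bar\psi|_E$ after restricting everything to $E$. Let $N^\vee=\sh I_{Z,W}/\sh I_{Z,W}^{2}$ be the conormal bundle of $Z$ in $W$. Using the standard facts about blowing up a smooth center, $\beta^*\sh I_{Z,W}|_E\cong\pi_E^*N^\vee$, $\sh O_{W'}(-E)|_E\cong\sh O_E(1)$ for $E=\mathbb{P}(N^\vee)$, and the restriction to $E$ of $\beta^*\sh I_{Z,W}\to\sh O_{W'}(-E)$ is the tautological quotient $\pi_E^*N^\vee\twoheadrightarrow\sh O_E(1)$. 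Hence $\bar\psi|_E$ is $\pi_E^*\tilde\psi$ followed by the tautological quotient, where $\tilde\psi:\sqrt[d]{\omega}_{W,X/S}|_Z\to N^\vee$ is induced by $\psi$ (compose $\psi$ with $\sh I_{Z,W}\twoheadrightarrow N^\vee$; this kills $\sh I_{Z,W}\cdot\sqrt[d]{\omega}_{W,X/S}$, so it factors through $\otimes\,\sh O_Z$). By construction $\tilde\psi=0$ exactly when $\mathrm{Im}(\psi)\subseteq\sh I_{Z,W}^{2}$, so it remains to show $\bar\psi|_E=0\iff\tilde\psi=0$. One implication is clear; conversely, if $\tilde\psi\neq0$ then, being a nonzero homomorphism out of a line bundle, its image at a general point $z\in Z$ spans a line $\ell\subseteq N^\vee_z$, and at a general point of the fibre $\pi_E^{-1}(z)=\mathbb{P}(N^\vee_z)$ the tautological quotient is nonzero on $\ell$, so $\bar\psi|_E\neq0$.

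I expect the second paragraph to be the main obstacle: pinning down the restriction to $E$ of the blowup surjection $\beta^*\sh I_{Z,W}\to\sh O_{W'}(-E)$ and keeping the projective-bundle conventions straight. A safe alternative is an explicit chart computation. Locally write $Z=V(x_1,\dots,x_{d+1})$ and take the chart of $W'$ with coordinates $x_1,\ y_j=x_j/x_1$ ($2\le j\le d+1$), so that $E=V(x_1)$; if $\psi$ carries a local generator of $\sqrt[d]{\omega}_{W,X/S}$ to $f=\sum_i a_i x_i$, then $\psi'$ carries the corresponding generator of $\sqrt[d]{\omega}_{W',X/S}$ to $\sum_i(\beta^*a_i)\,y_i$ (with $y_1:=1$), whose restriction to $E$ is $\sum_i(a_i|_Z)\,y_i$. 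Since $x_1,\dots,x_{d+1}$ is a regular sequence, the ambiguity in the $a_i$ is generated by the Koszul syzygies $x_j e_i-x_i e_j$ and hence lies in $\sh I_{Z,W}$, so the classes $a_i|_Z$ are well defined (they are the components of the image of $f$ in $N^\vee$); consequently $\sum_i(a_i|_Z)\,y_i$ vanishes identically on $E$ precisely when every $a_i|_Z=0$, i.e.\ when $f\in\sh I_{Z,W}^{2}$. Either way one concludes $\psi'|_E=0\iff\mathrm{Im}(\psi)\subseteq\sh I_{Z,W}^{2}$.
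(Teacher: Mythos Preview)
Your argument is correct. Both you and the paper start from the identity $\sqrt[d]{\omega}_{W',X/S}=\sh O_{W'}(E)\otimes\beta^*\sqrt[d]{\omega}_{W,X/S}$, but then diverge in technique. You work on $W'$ via pullback: compose $\beta^*\psi$ with the canonical surjection $\beta^*\sh I_{Z,W}\twoheadrightarrow\sh O_{W'}(-E)$, and for the ``moreover'' you analyze the restriction to $E$ through the conormal bundle and the tautological quotient on $E=\mathbb{P}(N^\vee)$. The paper instead works on $W$ via pushforward: it invokes the isomorphism $\sh L\otimes\sh I_{Z,W}\simeq\beta_*(\beta^*\sh L\otimes\sh I_{E,W'})$ (reduced by the projection formula to $\sh I_{Z,W}\simeq\beta_*\sh I_{E,W'}$), and then the companion identity $\beta_*\sh I_{E,W'}^{2}=\sh I_{Z,W}^{2}$ inside $\beta_*\sh O_{W'}\simeq\sh O_W$ handles the ``moreover'' in one stroke. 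The paper's route is slightly slicker, treating existence and the vanishing criterion uniformly via $\beta_*\sh I_{E,W'}^{k}=\sh I_{Z,W}^{k}$ for $k=1,2$; yours is more explicit and gives a concrete geometric picture of $\psi'|_E$ as the tautological quotient applied to $\pi_E^*\tilde\psi$, which is pleasant and potentially reusable. Your local-coordinate check is sound (the Koszul remark correctly shows the $a_i|_Z$ are well defined) but redundant once the conormal-bundle argument is in place.
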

	
	\begin{proof}
		The lemma follows from \eqref{equation: inductive definition of root} and Corollary \ref{corollary: twisting for blowup}.
	\end{proof}
	
	\section{Combinatorial language}\label{section: combinatorial language}
	
	This section is a summary of the combinatorial notation used in \S\ref{section: polydiagonal degenerations} and \S\ref{section: small resolutions}.
		
	\subsection{Partitions and trees}\label{subsection: partitions and trees} 
	Our account is very similar to that in \cite[\S2]{[Ul02]}, although some small differences are significant in practice. 
	
	Let $L_n$ be the lattice of partitions of $[n] = \{1,\ldots,n\}$, with the (inverse, according to some conventions, though we will follow \cite{[Ul02]}) refinement partial ordering $\rho_1 \leq \rho_2$ if each block of $\rho_2$ is contained in a block of $\rho_1$. Then 
	$\bot = 12\cdots n = \min L_n$ and $\top = 1|2|\cdots|n = \max L_n$. For each $\rho \in L_n$, $\sim_\rho$ denotes the corresponding equivalence relation on $[n]$, that is, $i \sim_\rho j$ if $i$ and $j$ belong to the same block of $\rho$, and $\blocks(\rho)$ is the set of blocks of $\rho$. For $\rho_1,\rho_2 \in L_n$, $\rho_1 \vee \rho_2$ and $\rho_1 \wedge \rho_2$ are the join and the meet respectively, for instance,  $12|34|56 \vee 123|456 = 12|3|4|56$ and $12|34|56 \wedge 123|456 = 123456$. 
	
	\begin{remark}\label{remark: bijection of complement}
	The partitions of $[n+1]$ which have $\{n+1\}$ as a block are in bijection with $L_n$, so there is a natural injective function 
	$\iota:L_n \hookrightarrow L_{n+1}$, and 
	\begin{equation}  L_{n+1} \backslash L_n \simeq U_n := \{(\rho,B):\rho \in L_n \text{ and } B \in \blocks(\rho)\}. \end{equation}
	By abuse of notation, we will sometimes omit $\iota$, as we did above.
	\end{remark}
	
	A \emph{chain} in a poset is a subset consisting of mutually comparable elements. As in \cite{[Ul02]}, the interplay between chains in $L_n$ and leveled trees is essential.
	
	\begin{definition}\label{definition: leveled tree representations}
		Let ${\mathcal H}=\{\rho_1 < \rho_2 < \cdots < \rho_k\} \neq \emptyset$ be a chain in $L_n$.
		
		The \emph{leveled tree representation of ${\mathcal H}$ with phantom vertices included} $T({\mathcal H})$ is a rooted tree whose root $\star$ is the only level-$0$ vertex, whose set of level-$i$ vertices is $\blocks(\rho_i)$, and in which edges correspond to inclusions of blocks on consecutive levels (and all level-$1$ vertices are adjacent to the root). Level-$k$ vertices are leaves. Finally, to each leaf $B \in \blocks(\rho_k)$ we attach legs (half-edges) indexed by the elements of $B$.
		
		The \emph{leveled tree representation of ${\mathcal H}$ with phantom vertices excluded} $T^s({\mathcal H})$ is obtained from $T({\mathcal H})$, by replacing each maximal chain in $T^s({\mathcal H})$ with internal vertices of degree $2$ (and vertex endpoints -- leg ends are forbidden), which doesn't contain $\star$ as an internal vertex, with an edge and erasing the internal vertices.	
	\end{definition}
	
	\begin{figure}[h]
		\begin{center}
			\begin{tiny}
				\begin{tikzpicture}[scale=0.5]
					\node at (-1,-5.3) {\normalsize{$T({\mathcal H})$}};
					\def\rr{3pt}
					\fill[black] (-1,0) circle (\rr) node[anchor = south] {$\star$};
					\fill[black] (-3,-1) circle (\rr) node[anchor = south] {$123$};
					\draw (-3,-1) -- (-1,0);
					\fill[black] (-4,-2) circle (\rr) node[anchor = east] {$12$};
					\draw (-4,-2) -- (-3,-1);
					\fill[black] (-2,-2) circle (\rr) node[anchor = east] {$3$};
					\draw (-2,-2) -- (-3,-1);
					\fill[black] (-4,-3) circle (\rr) node[anchor = east] {$12$};
					\draw (-4,-3) -- (-4,-2);
					\fill[black] (-2,-3) circle (\rr) node[anchor = east] {$3$};
					\draw (-2,-3) -- (-2,-2);
					\fill[black] (-4.5,-4) circle (0pt) node[anchor = north] {$1$};
					\draw (-4.5,-4) -- (-4,-3);
					\fill[black] (-3.5,-4) circle (0pt) node[anchor = north] {$2$};
					\draw (-3.5,-4) -- (-4,-3);
					\fill[black] (-2,-4) circle (0pt) node[anchor = north] {$3$};
					\draw (-2,-4) -- (-2,-3);
					
					\fill[black] (1,-1) circle (\rr) node[anchor = south] {$45678$};
					\draw (1,-1) -- (-1,0);
					\fill[black] (-0.5,-2) circle (\rr) node[anchor = east] {$45$};
					\draw (-0.5,-2) -- (1,-1);
					
					\fill[black] (-0.5,-3) circle (\rr) node[anchor = east] {$45$};
					\draw (-0.5,-3) -- (-0.5,-2);
					
					\fill[black] (-1,-4) circle (0pt) node[anchor = north] {$4$};
					\draw (-1,-4) -- (-0.5,-3);
					\fill[black] (0,-4) circle (0pt) node[anchor = north] {$5$};
					\draw (0,-4) -- (-0.5,-3);
					
					\fill[black] (1,-2) circle (\rr) node[anchor = east] {$67$};
					\draw (1,-2) -- (1,-1);
					
					\fill[black] (0.5,-3) circle (\rr) node[anchor = east] {$6$};
					\draw (0.5,-3) -- (1,-2);
					\fill[black] (0.5,-4) circle (0pt) node[anchor = north] {$6$};
					\draw (0.5,-4) -- (0.5,-3);
					\fill[black] (1.5,-3) circle (\rr) node[anchor = west] {$7$};
					\draw (1.5,-3) -- (1,-2);
					\fill[black] (1.5,-4) circle (0pt) node[anchor = north] {$7$};
					\draw (1.5,-4) -- (1.5,-3);
					
					\fill[black] (2.5,-2) circle (\rr) node[anchor = south] {$8$};
					\draw (2.5,-2) -- (1,-1);
					
					\fill[black] (2.5,-3) circle (\rr) node[anchor = west] {$8$};
					\draw (2.5,-3) -- (2.5,-2);
					
					\fill[black] (2.5,-4) circle (0pt) node[anchor = north] {$8$};
					\draw (2.5,-4) -- (2.5,-3);
					
					\def\aa{9}
					
					\node at (\aa-1,-5.3) {\normalsize{$T^s({\mathcal H})$}};
					
					\fill[black] (\aa-1,0) circle (\rr) node[anchor = south] {$\star$};
					\fill[black] (\aa-3,-1) circle (\rr) node[anchor = south] {$123$};
					\draw (\aa-3,-1) -- (\aa-1,0);
					\fill[black] (\aa-4,-3) circle (\rr) node[anchor = east] {$12$};
					\draw (\aa-4,-3) -- (\aa-3,-1);
					\fill[black] (\aa-2,-3) circle (\rr) node[anchor = east] {$3$};
					\draw (\aa-2,-3) -- (\aa-3,-1);
					\fill[black] (\aa-4.5,-4) circle (0pt) node[anchor = north] {$1$};
					\draw (\aa-4.5,-4) -- (\aa-4,-3);
					\fill[black] (\aa-3.5,-4) circle (0pt) node[anchor = north] {$2$};
					\draw (\aa-3.5,-4) -- (\aa-4,-3);
					\fill[black] (\aa-2,-4) circle (0pt) node[anchor = north] {$3$};
					\draw (\aa-2,-4) -- (\aa-2,-3);
					
					\fill[black] (\aa+1,-1) circle (\rr) node[anchor = south] {$45678$};
					\draw (\aa+1,-1) -- (\aa-1,0);
					
					\fill[black] (\aa-0.5,-3) circle (\rr) node[anchor = east] {$45$};
					\draw (\aa-0.5,-3) -- (\aa+1,-1);
					
					\fill[black] (\aa-1,-4) circle (0pt) node[anchor = north] {$4$};
					\draw (\aa-1,-4) -- (\aa-0.5,-3);
					\fill[black] (\aa+0,-4) circle (0pt) node[anchor = north] {$5$};
					\draw (\aa+0,-4) -- (\aa-0.5,-3);
					
					\fill[black] (\aa+1,-2) circle (\rr) node[anchor = east] {$67$};
					\draw (\aa+1,-2) -- (\aa+1,-1);
					
					\fill[black] (\aa+0.5,-3) circle (\rr) node[anchor = east] {$6$};
					\draw (\aa+0.5,-3) -- (\aa+1,-2);
					\fill[black] (\aa+0.5,-4) circle (0pt) node[anchor = north] {$6$};
					\draw (\aa+0.5,-4) -- (\aa+0.5,-3);
					\fill[black] (\aa+1.5,-3) circle (\rr) node[anchor = west] {$7$};
					\draw (\aa+1.5,-3) -- (\aa+1,-2);
					\fill[black] (\aa+1.5,-4) circle (0pt) node[anchor = north] {$7$};
					\draw (\aa+1.5,-4) -- (\aa+1.5,-3);
					
					\fill[black] (\aa+2.5,-3) circle (\rr) node[anchor = west] {$8$};
					\draw (\aa+2.5,-3) -- (\aa+1,-1);
					
					\fill[black] (\aa+2.5,-4) circle (0pt) node[anchor = north] {$8$};
					\draw (\aa+2.5,-4) -- (\aa+2.5,-3);
					
				\end{tikzpicture}
				\caption{${\mathcal H}= \{123|45678,12|3|45|67|8,12|3|45|6|7|8\}$. We are \emph{not} contracting all the way to the leg (visible for legs $3,6,7,8$), in contrast to \cite[Figure 7]{[Ul02]}. Instead, we contract only up to the corresponding leaf, even when there is a single leg attached to it.}
				\label{figure: trees figure}
			\end{tiny}
		\end{center}
	\end{figure}
	
	Later in the paper, we will see that $T({\mathcal H})$ relates to $W_n$ and $T^s({\mathcal H})$ relates to $\overline{P}_n$ (Remark \ref{remark: relation between W curve and P curve}). In the context of $W_n$ and/or $\overline{P}_n$, only chains ${\mathcal H}$ which contain the partition $\bot$ will occur. If $\bot \in {\mathcal H}$, then the root vertex $\star$ in $T({\mathcal H})$ (respectively $T^s({\mathcal H})$) has degree $1$, so removing it gives a tree $T({\mathcal H}) - \star$ (respectively $T^s({\mathcal H}) - \star$), in which we will regard the unique level 1 vertex to be the new root.
	
	We state here two ad hoc combinatorial lemmas needed in \S\ref{section: small resolutions}. 
	
	\begin{lemma}\label{lemma: spider has short legs}
	Let ${\mathcal H} \subseteq L_n$ be a chain such that $\bot \in {\mathcal H}$ and $|{\mathcal H}| \geq 2$. If $T^s({\mathcal H})-\star$ is a star after erasing the legs (that is, $K_{1,m}$ ignoring the legs) with the center adjacent to $\star$ in $T^s({\mathcal H})$, then $|{\mathcal H}|=2$ and $T^s({\mathcal H})=T({\mathcal H})$.
	\end{lemma}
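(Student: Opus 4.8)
The statement is purely combinatorial, so the plan is to unwind the definitions of $T(H)$ and $T'(H)$ and run a short degree count in $T(H)$. Throughout I use that a vertex of $T(H)$ is erased in $T'(H)$ only if it is an internal vertex of degree $2$, and that a contracted chain is required to have genuine vertices (not leg ends, and not an interior occurrence of $\star$) at both ends.

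First I would set up notation: since $\bot \in H$ and $|H| \geq 2$, write $H = \{\bot = \rho_1 < \rho_2 < \cdots < \rho_k\}$ with $k = |H| \geq 2$. In $T(H)$ the unique level-$1$ vertex is the single block $[n]$ of $\rho_1 = \bot$, so $\star$ has exactly one neighbor there, namely $[n]$. Because $\rho_2 > \rho_1$ strictly, $\rho_2$ has $b := |\blocks(\rho_2)| \geq 2$ blocks, so the vertex $[n]$ has degree $1 + b \geq 3$ in $T(H)$. In particular $[n]$ is not an internal degree-$2$ vertex; it therefore survives in $T'(H)$, and (since no contracted chain can have $\star$ adjacent to a degree-$\geq 3$ interior vertex) it remains the unique neighbor of $\star$ in $T'(H)$. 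By hypothesis $T'(H) - \star$ is a star with center adjacent to $\star$, so $[n]$ must be that center; consequently every child of $[n]$ in $T'(H)$ is a leaf of the tree with legs ignored.

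Next I would prove $k = 2$ by contradiction. If $k \geq 3$, then $\rho_3 > \rho_2$ strictly, so some block $B \in \blocks(\rho_2)$ is a union of at least two blocks of $\rho_3$. The level-$2$ vertex $B$ of $T(H)$ then has parent $[n]$ and at least two children (the blocks of $\rho_3$ contained in $B$), hence degree $\geq 3$, so it survives in $T'(H)$; moreover the edge joining $[n]$ and $B$ connects two vertices of degree $\geq 3$, so it is not absorbed into any contracted chain, and $B$ is still a child of $[n]$ in $T'(H)$. But each of the $\geq 2$ branches of $B$ contributes a child of $B$ in $T'(H)$: the maximal chain descending from $B$ along such a branch ends at a genuine vertex (either the first descendant of degree $\neq 2$, or the level-$k$ block carrying the legs), and that vertex becomes a child of $B$ after contraction. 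Hence $B$ has at least two children in $T'(H)$ and is not a leaf, contradicting the previous paragraph. Therefore $k = 2$.

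Finally, with $k = 2$ I would check that $T'(H) = T(H)$, i.e.\ that no chain is contracted. The vertices of $T(H)$ are $\star$ (which cannot be an interior vertex), the vertex $[n]$ of degree $1 + b \geq 3$, and the level-$2$ blocks $B_1, \dots, B_b$, each of degree $1 + |B_i|$. The only candidates for internal degree-$2$ vertices are the singleton blocks $B_i = \{j\}$; but such a vertex has neighbors $[n]$ (a genuine vertex) and the leg $j$ (a leg end), so it cannot be an interior vertex of a contractible chain. Thus no contraction occurs and $T'(H) = T(H)$, which completes the argument. There is no genuine obstacle here; the only point requiring care is the behaviour of the contraction rule at the last level — legs are not swallowed — and this is precisely what prevents a further collapse when $k = 2$, so one should state that part of the definition of $T'(H)$ carefully before invoking it.
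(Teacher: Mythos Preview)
Your argument is correct. You proceed by explicitly locating the center of the star as the vertex $[n]$, then, assuming $|H|\ge 3$, exhibiting a level-$2$ block $B$ of degree $\ge 3$ that survives in $T'(H)$ with at least two children, contradicting the star shape; the final check that no contraction occurs when $|H|=2$ is fine and uses the ``leg ends are forbidden'' clause exactly where it matters.

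The paper's proof is different and shorter: it observes that the number of non-root, non-leaf vertices of degree $>2$ is an invariant of the passage $T(H)\rightsquigarrow T'(H)$ (erased vertices have degree exactly $2$, and surviving vertices keep their degree), so this count is at most $1$ in $T(H)$ as well; since each level $1,\ldots,|H|-1$ of $T(H)$ contains at least one such vertex (some block must split at the next level), one gets $|H|-1\le 1$. Your approach is more hands-on and self-contained, requiring no invariance claim, at the cost of tracking branches through the contraction; the paper's approach packages the same idea into a one-line counting argument but leaves $T'(H)=T(H)$ implicit.
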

	
	\begin{proof}
		The number of non-root, non-leaf vertices of degree larger than $2$ is the same in $T({\mathcal H})$ and $T^s({\mathcal H})$, so it's at most $1$ in $T({\mathcal H})$ too. Every level of $T({\mathcal H})$ except level $0$ and level $|{\mathcal H}|$ contains at least one such vertex, so $|{\mathcal H}|-1 \leq 1$.
	\end{proof}
	
	\begin{lemma}\label{lemma: binary tree}
		Let $\rho_j$ be the partition of $[n]$ into the fibers of the function $[n] \to \zz$ given by $x \mapsto \left\lfloor 2^{-j}(x-1) \right\rfloor$. Let $k=\lceil \log_2 n \rceil$ and ${\mathcal H} =\{\bot = \rho_k, \ldots, \rho_0\}$. Then,
		\begin{enumerate}
			\item $T^s({\mathcal H}) - \star$ is binary with one leg attached to each leaf; and
			\item $T^s({\mathcal H}') = T^s({\mathcal H})$, for any chain ${\mathcal H}' \supseteq {\mathcal H}$ in $L_n$.
		\end{enumerate}
	\end{lemma}
	
	\begin{proof}
		The proof is entirely elementary.
	\end{proof}
	
	\subsection{Nested sets}\label{subsection: Fulton--MacPherson nests} A family of sets is \emph{nested} if any two of them are either disjoint or comparable, that is, one is contained in the other. The following fact is well-known and implicit in \cite{[FM94]}, so we will regard it as a definition.
	
	\begin{definition}
		Given a finite set $S$ and a nested family ${\mathcal F}$ of subsets of $S$ with at least two elements each, there exists a unique (up to isomorphism) rooted tree $R_S({\mathcal F})$ with $|{\mathcal F}|+1$ vertices and legs indexed by $S$, such that ${\mathcal F}$ coincides with the family of subsets of $S$ which consist of all legs attached to vertices that are descendants of (or equal to) a given non-root vertex. Moreover, if $v$ if a non-root vertex of $R_S({\mathcal F})$, the total number of edges and legs with endpoint $v$ is at least $3$.
	\end{definition}
	
	When $S$ is clear from context, we will write $R({\mathcal F})$ instead of $R_S({\mathcal F})$.
	
	Let $Z_n$ be the set of subsets of $[n]$ with at least two elements ($|Z_n|=2^n-n-1$), and
	$ Z_{n+1}^* = Z_{n+1} \backslash \{\{1,n+1\},\ldots,\{n,n+1\}\}$. 
	The families $Z_n$ and $Z_{n+1}^*$ come up in the construction of the Fulton--MacPherson space \cite{[FM94]}, where they index the blowups required to obtain $X[n]$ and its universal family $X[n]^+$ respectively.
	
	We can combine the objects above and those in \S\ref{subsection: partitions and trees} as follows. A pair $({\mathcal H},{\mathcal F})$ consisting of a chain ${\mathcal H} \subseteq L_n$ and a nested family ${\mathcal F} \subseteq Z_n$ is \emph{comparable} if each member of ${\mathcal F}$ is contained in some block of $\max {\mathcal H}$. To each comparable pair $({\mathcal H},{\mathcal F})$, we associate rooted trees $T({\mathcal H},{\mathcal F})$ and $T^s({\mathcal H},{\mathcal F})$ as follows. 
	
	\begin{definition}\label{definition: combined leveled tree representations}
		Let $({\mathcal H},{\mathcal F})$ be a comparable pair as above. For each $B \in \blocks(\max {\mathcal H})$, let 
		$ {\mathcal F}|_B = \{ S \in {\mathcal F}: S \subseteq B \}$, and, in the rooted tree $T({\mathcal H})$ (respectively $T^s({\mathcal H})$) in Definition \ref{definition: leveled tree representations}, we replace the leaf corresponding to $B$ and the legs attached to this leaf with the rooted tree $R_B({\mathcal F}|_B)$, in such a way that the old leaf is replaced with the root of $R_B({\mathcal F}|_B)$. The resulting rooted tree with legs is denoted $T({\mathcal H},{\mathcal F})$ (respectively $T^s({\mathcal H},{\mathcal F})$).
	\end{definition}
	
	A not-root vertex of $T({\mathcal H},{\mathcal F})$ (respectively $T^s({\mathcal H},{\mathcal F})$) has \emph{zero scaling} if it does not come from a vertex of $T({\mathcal H})$ (resp. $T^s({\mathcal H},{\mathcal F})$), \emph{finite scaling} if it corresponds to a leaf of $T({\mathcal H})$ (resp. $T^s({\mathcal H},{\mathcal F})$), or \emph{infinite scaling} if it corresponds to a non-leaf vertex of $T({\mathcal H})$ (resp. $T^s({\mathcal H},{\mathcal F})$). 
	
%	\begin{figure}[!t]
	\begin{example} The case ${\mathcal H}=\{1234|5678,13|24|5678\}$, ${\mathcal F} = \{\{1,3\},\{6,7,8\},\{6,7\}\}$ is shown below. 
		\begin{center}
			\begin{tiny}
				\begin{tikzpicture}[scale=0.5]
					\def\rr{3pt}
					\def\xx{0};
					
					\fill[black] (-1+\xx,0) circle (\rr) node[anchor = south] {$\star$};
					\fill[black] (-3+\xx,-1) circle (\rr) node[anchor = south] {$1234$};
					\fill[black] (1+\xx,-1) circle (\rr) node[anchor = south] {$5678$};		
					\draw (-3+\xx,-1) -- (-1+\xx,0) -- (1+\xx,-1);
					
					\fill[black] (-4+\xx,-2) circle (\rr) node[anchor = east] {$13$};
					\fill[black] (-2+\xx,-2) circle (\rr) node[anchor = east] {$24$};
					\draw (-2+\xx,-2) -- (-3+\xx,-1) -- (-4+\xx,-2);
					
					\fill[black] (-4.5+\xx,-3) circle (0pt) node[anchor = north] {$1$};
					\fill[black] (-3.5+\xx,-3) circle (0pt) node[anchor = north] {$3$};
					\draw (-3.5+\xx,-3) -- (-4+\xx,-2) -- (-4.5+\xx,-3);
					
					\draw[dashed] (-4+\xx,-2.5) circle (0.9);
					
					\def\ax{0}; \def\ay{-3};
					
					\fill[black] (-4+\xx,-2+\ay) circle (\rr) node[anchor = south] {$R(\{1,3\})$};
					\fill[black] (-4+\xx,-3+\ay) circle (\rr);
					\draw (-4+\xx,-3+\ay) -- (-4+\xx,-2+\ay);
					\fill[black] (-4.5+\xx,-4+\ay) circle (0pt) node[anchor = north] {$1$};
					\fill[black] (-3.5+\xx,-4+\ay) circle (0pt) node[anchor = north] {$3$};
					\draw (-3.5+\xx,-4+\ay) -- (-4+\xx,-3+\ay) -- (-4.5+\xx,-4+\ay);

					\fill[black] (-2.5+\xx,-3) circle (0pt) node[anchor = north] {$2$};
					\fill[black] (-1.5+\xx,-3) circle (0pt) node[anchor = north] {$4$};
					\draw (-1.5+\xx,-3) -- (-2+\xx,-2) -- (-2.5+\xx,-3);
					
					\draw[dashed] (-2+\xx,-2.5) circle (0.9);

					\fill[black] (-2+\xx,-2+\ay) circle (\rr) node[anchor = south] {$R(\emptyset)$};
					\fill[black] (-2.5+\xx,-3+\ay) circle (0pt) node[anchor = north] {$2$};
					\fill[black] (-1.5+\xx,-3+\ay) circle (0pt) node[anchor = north] {$4$};
					\draw (-1.5+\xx,-3+\ay) -- (-2+\xx,-2+\ay) -- (-2.5+\xx,-3+\ay);

					\fill[black] (1+\xx,-2) circle (\rr) node[anchor = east] {$5678$};
					\draw (1+\xx,-2) -- (1+\xx,-1);
					
					\draw[dashed] (1+\xx,-2.8) circle (1.2);
					
					\fill[black] (0+\xx,-3) circle (0pt) node[anchor = north] {$5$}; \draw (0+\xx,-3) -- (1+\xx,-2);
					\fill[black] (0.6+\xx,-3) circle (0pt) node[anchor = north] {$6$}; \draw (0.6+\xx,-3) -- (1+\xx,-2);
					\fill[black] (1.4+\xx,-3) circle (0pt) node[anchor = north] {$7$}; \draw (1.4+\xx,-3) -- (1+\xx,-2);
					\fill[black] (2+\xx,-3) circle (0pt) node[anchor = north] {$8$}; \draw (2+\xx,-3) -- (1+\xx,-2);
					
					\fill[black] (1+\xx,-2+\ay) circle (\rr) node[anchor = south] {$R(\{6,7,8\},\{6,7\})$};	
					\fill[black] (0+\xx,-3+\ay) circle (0pt) node[anchor = north] {$5$}; \draw (0+\xx,-3+\ay)--(1+\xx,-2+\ay);
					\fill[black] (2+\xx,-3+\ay) circle (\rr); \draw (2+\xx,-3+\ay)--(1+\xx,-2+\ay);
					\fill[black] (3+\xx,-4+\ay) circle (0pt) node[anchor = north] {$8$}; \draw (2+\xx,-3+\ay)--(3+\xx,-4+\ay);
					\fill[black] (1+\xx,-4+\ay) circle (\rr); \draw (2+\xx,-3+\ay)--(1+\xx,-4+\ay);
					
					\fill[black] (0+\xx,-5+\ay) circle (0pt) node[anchor = north] {$6$}; \draw (0+\xx,-5+\ay)--(1+\xx,-4+\ay);
					\fill[black] (2+\xx,-5+\ay) circle (0pt) node[anchor = north] {$7$}; \draw (2+\xx,-5+\ay)--(1+\xx,-4+\ay);
					
					\def\xx{9};
					
					\fill[black] (-1+\xx,0) circle (\rr) node[anchor = south] {$\star$};
					\fill[black] (-3+\xx,-1) circle (\rr);
					\fill[black] (1+\xx,-1) circle (\rr);		
					\draw (-3+\xx,-1) -- (-1+\xx,0) -- (1+\xx,-1);
					
					\fill[black] (-4+\xx,-2) circle (\rr);
					\fill[black] (-2+\xx,-2) circle (\rr);
					\draw (-2+\xx,-2) -- (-3+\xx,-1) -- (-4+\xx,-2);
					
					\def\ay{0};
					
					\fill[black] (-4+\xx,-2+\ay) circle (\rr);
					\fill[black] (-4+\xx,-3+\ay) circle (\rr);
					\draw (-4+\xx,-3+\ay) -- (-4+\xx,-2+\ay);
					\fill[black] (-4.5+\xx,-4+\ay) circle (0pt) node[anchor = north] {$1$};
					\fill[black] (-3.5+\xx,-4+\ay) circle (0pt) node[anchor = north] {$3$};
					\draw (-3.5+\xx,-4+\ay) -- (-4+\xx,-3+\ay) -- (-4.5+\xx,-4+\ay);

					\fill[black] (-2.5+\xx,-3) circle (0pt) node[anchor = north] {$2$};
					\fill[black] (-1.5+\xx,-3) circle (0pt) node[anchor = north] {$4$};
					\draw (-1.5+\xx,-3) -- (-2+\xx,-2) -- (-2.5+\xx,-3);

					\fill[black] (-2+\xx,-2+\ay) circle (\rr);
					
					\fill[black] (1+\xx,-2) circle (\rr);
					\draw (1+\xx,-2) -- (1+\xx,-1);
					
					\fill[black] (1+\xx,-2+\ay) circle (\rr);	
					\fill[black] (0+\xx,-3+\ay) circle (0pt) node[anchor = north] {$5$}; \draw (0+\xx,-3+\ay)--(1+\xx,-2+\ay);
					\fill[black] (2+\xx,-3+\ay) circle (\rr); \draw (2+\xx,-3+\ay)--(1+\xx,-2+\ay);
					\fill[black] (3+\xx,-4+\ay) circle (0pt) node[anchor = north] {$8$}; \draw (2+\xx,-3+\ay)--(3+\xx,-4+\ay);
					\fill[black] (1+\xx,-4+\ay) circle (\rr); \draw (2+\xx,-3+\ay)--(1+\xx,-4+\ay);
					
					\fill[black] (0+\xx,-5+\ay) circle (0pt) node[anchor = north] {$6$}; \draw (0+\xx,-5+\ay)--(1+\xx,-4+\ay);
					\fill[black] (2+\xx,-5+\ay) circle (0pt) node[anchor = north] {$7$}; \draw (2+\xx,-5+\ay)--(1+\xx,-4+\ay);
					
					\draw [dashed] (4,-2.5) -- (13,-2.5); \draw [dashed] (4,-1.5) -- (13,-1.5);
					
					\node at (13,-0.5) {$\infty$};
					\node at (13,-2) {finite};
					\node at (13,-3.5) {$0$};
					
					\node at (9,-6.5) {\normalsize{$T({\mathcal H},{\mathcal F})$}};
					
					\def\xx{19};
					
					\fill[black] (-1+\xx,0) circle (\rr) node[anchor = south] {$\star$};
					\fill[black] (-3+\xx,-1) circle (\rr);
			%		\fill[black] (1+\xx,-1) circle (\rr);		
					\draw (-3+\xx,-1) -- (-1+\xx,0);% -- (1+\xx,-1);
					
					\fill[black] (-4+\xx,-2) circle (\rr);
					\fill[black] (-2+\xx,-2) circle (\rr);
					\draw (-2+\xx,-2) -- (-3+\xx,-1) -- (-4+\xx,-2);
					
					\def\ay{0};
					
					\fill[black] (-4+\xx,-2+\ay) circle (\rr);
					\fill[black] (-4+\xx,-3+\ay) circle (\rr);
					\draw (-4+\xx,-3+\ay) -- (-4+\xx,-2+\ay);
					\fill[black] (-4.5+\xx,-4+\ay) circle (0pt) node[anchor = north] {$1$};
					\fill[black] (-3.5+\xx,-4+\ay) circle (0pt) node[anchor = north] {$3$};
					\draw (-3.5+\xx,-4+\ay) -- (-4+\xx,-3+\ay) -- (-4.5+\xx,-4+\ay);

					\fill[black] (-2.5+\xx,-3) circle (0pt) node[anchor = north] {$2$};
					\fill[black] (-1.5+\xx,-3) circle (0pt) node[anchor = north] {$4$};
					\draw (-1.5+\xx,-3) -- (-2+\xx,-2) -- (-2.5+\xx,-3);

					\fill[black] (-2+\xx,-2+\ay) circle (\rr);
					
					\fill[black] (1+\xx,-2) circle (\rr);
					\draw (1+\xx,-2) -- (\xx-1,0);
					
					\fill[black] (1+\xx,-2+\ay) circle (\rr);	
					\fill[black] (0+\xx,-3+\ay) circle (0pt) node[anchor = north] {$5$}; \draw (0+\xx,-3+\ay)--(1+\xx,-2+\ay);
					\fill[black] (2+\xx,-3+\ay) circle (\rr); \draw (2+\xx,-3+\ay)--(1+\xx,-2+\ay);
					\fill[black] (3+\xx,-4+\ay) circle (0pt) node[anchor = north] {$8$}; \draw (2+\xx,-3+\ay)--(3+\xx,-4+\ay);
					\fill[black] (1+\xx,-4+\ay) circle (\rr); \draw (2+\xx,-3+\ay)--(1+\xx,-4+\ay);
					
					\fill[black] (0+\xx,-5+\ay) circle (0pt) node[anchor = north] {$6$}; \draw (0+\xx,-5+\ay)--(1+\xx,-4+\ay);
					\fill[black] (2+\xx,-5+\ay) circle (0pt) node[anchor = north] {$7$}; \draw (2+\xx,-5+\ay)--(1+\xx,-4+\ay);
					
					\draw [dashed] (14,-2.5) -- (23,-2.5); \draw [dashed] (14,-1.5) -- (23,-1.5);
					
					\node at (23,-0.5) {$\infty$};
					\node at (23,-2) {finite};
					\node at (23,-3.5) {$0$};
					
					\node at (19,-6.5) {\normalsize{$T^s({\mathcal H},{\mathcal F})$}};
					
				\end{tikzpicture}
%				\caption{${\mathcal H}=\{1234|5678,13|24|5678\}$, ${\mathcal F} = \{\{1,3\},\{6,7,8\},\{6,7\}\}.$}
%				\label{figure: combining constructions}
			\end{tiny}
		\end{center}
	\end{example}
%\end{figure}
	
	\subsection{Enumerations}\label{subsection: enumerations} 
		The sets and partitions discussed above will index certain sequences of blowups in the constructions in \S\ref{section: polydiagonal degenerations}. Thus, order is important, so let us fix notation for some frequently used enumerations. In \S\ref{section: polydiagonal degenerations}, we will use a graph $\graph$ with $n$ vertices, which is either complete or has no edges.
			\begin{enumerate}
				\item[${\mathfrak l}$] is an increasing enumeration $\bot,\ldots,\top$ of $L_n$: first the partition with $1$ block, then the partitions with $2$ blocks, etc.;
				\item[${\mathfrak l}^\star$] is a corresponding enumeration of $L_{n+1} \backslash L_n$ by Remark \ref{remark: bijection of complement}: first all pairs whose first entry is the first partition in ${\mathfrak l}$, then all pairs whose first entry is the second partition in ${\mathfrak l}$, etc.;
				\item[${\mathfrak L}$] is an increasing enumeration of $L_{n+1}$;
				\item[${\mathfrak e}$] is a Fulton--MacPherson enumeration \cite[page 196]{[FM94]} of $Z_n$ if $\graph$ is complete, respectively empty if $\graph$ has no edges;
				\item[${\mathfrak e}^\star$] is a Fulton--MacPherson enumeration of $ Z_{n+1}^* \backslash Z_n$
				if $\graph$ is complete (which essentially means non-increasing cardinalities), respectively empty if $\graph$ has no edges.
			\end{enumerate}

		\section{Polydiagonal degenerations}\label{section: polydiagonal degenerations}
		
		Throughout \S\ref{section: polydiagonal degenerations}, $X$ is a smooth projective variety of dimension $d > 0$. For each partition $\rho \in L_n$, we have a \emph{polydiagonal} of $X^n$
	\begin{equation*} \Delta_\rho = \{(x_1,\ldots,x_n) \in X^n: \text{ $x_i=x_j$ if $i \sim_\rho j$} \}. \end{equation*}
	By abuse of notation, if $S \subseteq [n]$, we have a \emph{diagonal}
	\begin{equation*} \Delta_S = \{(x_1,\ldots,x_n) \in X^n: \text{ $x_i=x_j$ if $i, j \in S$} \}. \end{equation*}
	As usual, $\mathrm{F}(X,n) = X^n \backslash \bigcup_{|S| \geq 2} \Delta_S = X^n \backslash \bigcup_{|S| = 2} \Delta_S$ is the configuration space.
	
	In \S\ref{section: polydiagonal degenerations} and \S\ref{section: small resolutions}, we will rely heavily on Li's theory of wonderful compactifications \cite{[Li09]}. It is presumed that the reader is familiar with the results of \cite{[Li09]}, and with the way in which the results of \cite{[FM94]} can be phrased in the language of \cite{[Li09]}, which is sketched in \cite[\S4.2]{[Li09]}. If ${\mathcal G}$ is a building set in $Y$, we will sometimes use the notation $ X \times {\mathcal G} = \{ X \times S: S \in {\mathcal G} \}$, which is a building set in $X \times Y$. We will use similar notations ${\mathcal G} \times Z$, $X \times {\mathcal G} \times Z$, etc. for the building sets in $Y \times Z$, $X \times Y \times Z$, etc.

	\subsection{Construction of polydiagonal degenerations}\label{subsection: polydiagonal degenerations} In \S\ref{section: polydiagonal degenerations}, we construct the `polydiagonal degenerations' of $X^n$ and of the Fulton--MacPherson compactification $X[n]$ \cite{[FM94]}. The author owes the reader an explanation and apology regarding exposition:  the decision to carry out the two constructions simultaneously in the interest of keeping the size of the paper reasonable has the major expository downside that many of the objects considered are trivial or empty in the case of $X^n$, and are of relevance only in the case of $X[n]$. It might be possible to find a natural common generalization by considering Kuperberg--Thurston compactifications \cite{[KT99]}, but this seems to be a further complication. Nevertheless, the exposition is quite similar to what it would have been if we only considered $X[n]$. To help the reader navigate the technicalities, we note that the entire point of the construction essentially amounts to items \ref{item: combinatorics item in polydiagonal theorem} and \ref{item: scaling in polydiagonal theorem} in Theorem \ref{theorem: polydiagonal degeneration}.
	
	Let $\graph$ be a graph with $n$ vertices labeled $1,\ldots,n$, and assume that either:
	\begin{enumerate}
		\item $\graph$ has no edges, that is, $\graph = K_n^c$; or
		\item $\graph$ is complete, that is, $\graph = K_n$.
	\end{enumerate}
	Let $F(X,\graph) = X^n \backslash \bigcup_{e \in \mathrm{Edges}(\graph)} \Delta_e$, which is thus either $X^n$ or ${\mathrm F}(n,X)$,
	$$
	X[\graph] = 
	\begin{cases}
	X^n & \text{if $\graph$ has no edges} \\
	X[n] & \text{if $\graph$ is complete,}
	\end{cases}
	$$
	and
	$$
	\dg_\graph = 
	\begin{cases}
	\emptyset & \text{if $\graph$ has no edges} \\
	\{\Delta_S: \mathop{} S \subseteq [n], |S| \geq 2\} & \text{if $\graph$ is complete}.
	\end{cases}
	$$
	Then, $X[\graph]$ is the wonderful compactification \cite[Theorem 1.2]{[Li09]} of the building set \cite[Definition 2.2]{[Li09]} $\dg_\graph$ in $X^n$ (allowing empty building sets), by e.g. \cite[\S4.2]{[Li09]}. 
	
	We will construct a degeneration $X[[\graph]]$ of $X[\graph]$, that is, a variety $X[[\graph]]$ and a (flat, projective) morphism $X[[\graph]] \to \aline$
	whose restriction to the preimage of $\cs = \aline \backslash \{0\}$ coincides with the projection $\cs \times X[\graph] \to \cs$. Note that
	\begin{equation} {\mathcal P}_\graph = (\aline \times  \dg_\graph) \cup \{ \{0\} \times \Delta_\rho: \rho \in L_n \} \end{equation}
	is a building set in $\aline \times X^n$. Indeed, if $\graph = K_n^c$, then ${\mathcal P}_\graph$ coincides with its induced arrangement, and if $\graph = K_n$ (and $n \geq 2$), then the arrangement induced by ${\mathcal P}_\graph$ is $ \{ \aline \times \Delta_\rho: \rho \in L_n \setminus \{ \top \} \}  \cup \{ \{0\} \times \Delta_\rho: \rho \in L_n \}$, and the intersection $\aline \times \Delta_\rho = \bigcap_{B \in \blocks(\rho)} (\aline \times \Delta_B)$ in $\aline \times X^n$ is transverse.
	
	\begin{definition}\label{definition: polydiagonal degeneration}
	The \emph{polydiagonal degeneration} $X[[\graph]]$ of $X[\graph]$ is the wonderful compactification of ${\mathcal P}_\graph$.
	\end{definition}
	
	If $\graph$ is complete, there are two types of `boundary' divisors on $X[[\graph]]$, but only one if $\graph$ has no edges. Namely,
	\begin{enumerate}
	\item let $D^v_\rho \subset X[[\graph]]$ be the divisor corresponding to $\{0\} \times \Delta_\rho$; and
	\item let $D^h_S \subset X[[\graph]]$ be the divisor corresponding to $\aline \times \Delta_S$ if $\graph$ is complete.
	\end{enumerate}
	Recall that the $\dg_{K_n}$-nests correspond to nested subfamilies of $Z_n$ (\cite{[FM94]} and \cite{[Li09]}). In our setup, the situation is similar. 
	
	\begin{lemma}\label{lemma: nests in polydiagonal degeneration}
	Any ${\mathcal P}_\graph$-nest \cite[Definition 2.3]{[Li09]} is a union of two sets as follows:
	\begin{enumerate}
		\item the polydiagonals in $\{0\} \times X^n$ corresponding to a chain ${\mathcal H} \subseteq L_n$, and
		\item the products of $\aline$ with diagonals in $X^n$ which form a $\dg_\graph$-nest, such that any subset of $[n]$ corresponding to one of these diagonals of $X^n$ is contained in some block of $\max {\mathcal H}$. (Thus, this is empty if $\graph$ has no edges.)
	\end{enumerate}
	Conversely, any subset of ${\mathcal P}_\graph$ of this form is a ${\mathcal P}_\graph$-nest. 
	\end{lemma}
	
	\begin{proof}
		If $\graph =K_n^c$, the claim is trivial, since ${\mathcal P}_\graph$ coincides with the arrangement it induces. When $\graph = K_n$, any flag in the induced arrangement is of the form
		$ \{0\} \times \Delta_{\rho_1} \subset \cdots \subset \{0\} \times \Delta_{\rho_k} \subset \aline \times \Delta_{\rho_{k+1}} \subset \cdots \subset \aline \times \Delta_{\rho_\ell}$, 
		with $\rho_1<\cdots<\rho_k \leq \rho_{k+1}<\cdots < \rho_\ell$ in $L_n$, $k \leq \ell$, and $\rho_\ell \neq \top$ if $k \neq \ell$, and the argument is similar to that in the usual Fulton--MacPherson setup.
	\end{proof}
	
	Thus, the ${\mathcal P}_\graph$-nests are in bijective correspondence with the chains in $L_n$ if $\graph=K_n^c$, respectively with the set of comparable (\S\ref{subsection: Fulton--MacPherson nests}) pairs $({\mathcal H},{\mathcal F})$ if $\graph = K_n$. 
	
	\subsection{Universal families}\label{subsection: universal families} Most of \S\ref{section: polydiagonal degenerations} is devoted to constructing a family of degenerations of $X$ over $X[[\graph]]$. In fact, we will construct two such families, $X[[\graph]]^\fuf$ and $X[[\graph]]^\suf$. The `correct' one is $X[[\graph]]^\suf$, but it is convenient for many technical reasons to have both when $\graph$ is complete; when $\graph$ has no edges, they coincide. 
	
	First, let us review the universal family over $X[\graph]$. Let $\Delta'_T \subseteq X^{n+1}$ be the diagonal corresponding to $T \subseteq [n+1]$, and $\Delta'_\varpi \subseteq X^{n+1}$ the polydiagonal corresponding to $\varpi \in L_{n+1}$. Note that
	\begin{equation} 
	\dg_\graph^+ = (\dg_\graph \times X) \cup \{ \Delta'_{S \cup \{n+1\}}: \mathop{} \Delta_S \in \dg_\graph \} 
	\end{equation}
	is a building set in $X^{n+1}$. Then, the wonderful compactification $X[\graph]^+$ of $\dg_\graph^+$ is the universal family over $X[\graph]$, either $X^{n+1}$ or $X[n]^+$, cf. \cite{[FM94]} for the latter.
	
	To construct the universal families over the polydiagonal degenerations, let
		\begin{equation*}
			\begin{aligned}
				{\mathcal U}^\fuf_{\graph} &= (\aline \times \dg_\graph \times X) \cup \{\{0\} \times \Delta'_\varpi : \mathop{} \varpi \in L_{n+1} \}, \\
				{\mathcal U}^\suf_{\graph} &= (\aline \times \dg_\graph^+) \cup \{\{0\} \times \Delta'_\varpi : \mathop{} \varpi \in L_{n+1} \} . \\
			\end{aligned}
		\end{equation*}
	Note that ${\mathcal U}^\fuf_\graph$ and ${\mathcal U}^\suf_\graph$ are building sets in $\aline \times X^{n+1}$ (this is similar to ${\mathcal P}_\graph$ being a building set), and that ${\mathcal U}^\fuf_\graph = {\mathcal U}^\suf_\graph$ if $\graph$ has no edges. 
	
	\begin{definition}
		Let $X[[\graph]]^\fuf$ and $X[[\graph]]^\suf$ be the wonderful compactifications of ${\mathcal U}^\fuf_\graph$ and ${\mathcal U}^\suf_\graph$ respectively. 
	\end{definition}
	
	\begin{proposition}\label{proposition: commutative diagram}
		There exists a unique commutative diagram
		\begin{center}
			\begin{tikzpicture}[scale = 1.4]
				
				\node (a) at (4,0) {$\aline \times X [\graph] \times X$};
				\node (b) at (0.5,0) {$X [[\graph]] \times X$};
				
				\node (d) at (4,1) {$\aline \times X [\graph]^+$};
				\node (e) at (1.5,1) {$X [[\graph]]^\suf$};
				\node (f) at (-0.5,1) {$X [[\graph]]^\fuf$};
				
				\draw [->] (b) --node[above] {$\upsilon_\graph \times \mathrm{id}_X$} (a); 
				\draw [->] (e) -- (d); 
				\draw [->] (e) --node[above] {$\xi_\graph$}  (f); 
				\draw [->] (d) -- (a); \draw [->] (e) -- (b); \draw [->] (f) -- (b); 
			\end{tikzpicture}
		\end{center}
		over $\aline \times X^{n+1}$. (All varieties in the diagram are $\aline \times X^{n+1}$-schemes naturally.)
	\end{proposition}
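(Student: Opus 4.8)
The plan is to realize all five varieties in the diagram as wonderful compactifications, in the sense of \cite{[Li09]}, of building sets inside the single ambient variety $\aline \times X^{n+1}$, and then to produce the morphisms, their commutativity, and their uniqueness from the functoriality of the wonderful compactification construction together with separatedness over $\aline \times X^{n+1}$. Nothing beyond \cite{[Li09]} and the Fulton--MacPherson descriptions of $X[\graph]$ and $X[\graph]^+$ is needed.

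The first step is to transport all the building sets into $\aline \times X^{n+1} = (\aline \times X^n) \times X$. There one has $\nabla_\alpha \times X = \nabla'_{\iota\alpha}$ for $\alpha \in L_{[n]}$ and $\nabla_\alpha \times X = \nabla'_\alpha$ for $\alpha \in Z_n$ (notation of Remark \ref{remark: bijection of complement}), as well as $\aline \times \Delta'_T = \nabla'_T$ for $T \in Z_{n+1}$; moreover $\aline \times (-)$ and $(-) \times X$ commute with blowups along smooth centers, so they carry a wonderful compactification to the wonderful compactification of the transported building set, compatibly with the structure morphisms to $\aline \times X^{n+1}$. Hence each vertex of the diagram is the wonderful compactification of $\{\nabla'_\beta : \beta \in J\}$ for an explicit index set $J \subseteq L_{[n+1]} \sqcup Z_{n+1}$: for $\graph = K_n$ one finds $J = Z_n$ for $\aline \times X[\graph] \times X$ (using that $X[n]$ is the wonderful compactification of $\{\Delta_S : S \in Z_n\}$), $J = \iota(L_{[n]}) \sqcup Z_n$ for $X[[\graph]] \times X$, $J = Z_{n+1}^*$ for $\aline \times X[\graph]^+$ (the Fulton--MacPherson universal family, \cite{[FM94]}), $J = L_{[n+1]} \sqcup Z_{n+1}^*$ for $X[[\graph]]^\suf$, and $J = L_{[n+1]} \sqcup Z_n$ for $X[[\graph]]^\fuf$; for $\graph = K_n^c$ delete every occurrence of a $Z$. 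Since $\iota(L_{[n]}) \subseteq L_{[n+1]}$ and $Z_n \subseteq Z_{n+1}^* \subseteq Z_{n+1}$ (\S\ref{section: combinatorial language}), along every arrow of the diagram the index set attached to the source contains the one attached to the target; and all the collections $\{\nabla'_\beta : \beta \in J\}$ occurring here are building sets --- for ${\mathcal G}_\graph$, ${\mathcal U}^\fuf_\graph$ and ${\mathcal U}^\suf_\graph$ this is part of the setup of \S\ref{subsection: polydiagonal degenerations}, while for the three ``auxiliary'' vertices it follows because products and $\aline$-multiples of building sets are building sets.

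The morphisms now come from the functoriality of wonderful compactifications: if $\mathcal G \subseteq \mathcal G'$ are building sets in a smooth variety $Y$, then $Y_{\mathcal G'}$ and $Y_{\mathcal G}$ are the closures of the common open $Y \setminus \bigcup_{G' \in \mathcal G'} G'$ in $\prod_{G' \in \mathcal G'} \mathrm{Bl}_{G'} Y$ and in $\prod_{G \in \mathcal G} \mathrm{Bl}_{G} Y$ respectively, so the coordinate projection between these products restricts to a morphism $Y_{\mathcal G'} \to Y_{\mathcal G}$ over $Y$ which is the identity on $Y \setminus \bigcup_{G' \in \mathcal G'} G'$ (\cite{[Li09]}; for subspace arrangements, \cite{[DP95]}). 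Applying this along each arrow produces the diagram. For uniqueness and commutativity, note that all five varieties contain a common dense open $U$ --- the complement in $\aline \times X^{n+1}$ of all the $\nabla'_\beta$ with $\beta$ in the index set of $X[[\graph]]^\suf$, which lifts to an open immersion into each of the five varieties --- and over $U$ they all restrict to the inclusion $U \hookrightarrow \aline \times X^{n+1}$; moreover every morphism constructed above is an $(\aline \times X^{n+1})$-morphism inducing $\mathrm{id}_U$, and all five varieties are separated over $\aline \times X^{n+1}$. Therefore each arrow is the unique $(\aline \times X^{n+1})$-morphism extending $\mathrm{id}_U$; this gives the uniqueness of the diagram, and it forces every composite around the triangle $X[[\graph]]^\suf \to X[[\graph]]^\fuf \to X[[\graph]] \times X$ and the square $X[[\graph]]^\suf \to \aline \times X[\graph]^+ \to \aline \times X[\graph] \times X$ to be that same unique extension, which is the asserted commutativity. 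The step I expect to need the most care --- though it is not deep --- is the identification of $\aline \times X[\graph] \times X$, $X[[\graph]] \times X$ and especially $\aline \times X[\graph]^+$ with the claimed wonderful compactifications (for $X[\graph]^+$ one must check that the Fulton--MacPherson blowup recipe for the universal family matches $\{\Delta'_T : T \in Z_{n+1}^*\}$), together with the verification that the ``mixed'' collections $\{\nabla'_\beta : \beta \in J\}$ are building sets; granting these, everything else is bookkeeping with the inclusions of index sets.
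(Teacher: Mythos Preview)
Your argument is correct. Both you and the paper reduce uniqueness and commutativity to separatedness over $\aline \times X^{n+1}$ and the existence of a common dense open, so the only real content is the construction of the arrows, and here the approaches genuinely differ. You use Li's closure-in-product description of $Y_{\mathcal G}$ directly: once every vertex is identified with the wonderful compactification of a building set $\{\nabla'_\beta:\beta\in J\}$ and the index sets are nested along every arrow, the coordinate projection $\prod_{\beta\in J'}\mathrm{Bl}_{\nabla'_\beta}(\aline\times X^{n+1})\to\prod_{\beta\in J}\mathrm{Bl}_{\nabla'_\beta}(\aline\times X^{n+1})$ carries the closure to the closure. The paper instead exhibits, for each arrow, an explicit enumeration of the larger building set satisfying Li's condition~$(*)$ (this is the content of Lemma~\ref{lemma: enumerations satisfy Li's condition}), so that \cite[Theorem 1.3]{[Li09]} realizes the source as an iterated blowup passing through the target at an intermediate stage.

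Your route is cleaner for the proposition in isolation: it avoids the case analysis needed to verify condition~$(*)$ for the mixed enumerations $\mathbf{lzl^\star z^\star}$ and $\mathbf{zz^\star L}$. The paper's route, however, is not wasted effort: the explicit blowup sequence $\mathbf{lzl^\star z^\star}$ from $\aline\times X^{n+1}$ to $X[[\graph]]^\suf$ is precisely what drives the long induction in the proof of Theorem~\ref{theorem: polydiagonal degeneration} (Claims~\ref{claim: big induction in round 2 of blowing up} and~\ref{claim: scaling is nonvanishing at markings}), where one needs to track sections, dominant transforms, and the scaling $\psi$ one blowup at a time. So the paper's proof of Proposition~\ref{proposition: commutative diagram} is doing double duty, setting up machinery that your argument bypasses but that is indispensable immediately afterward. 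The identifications you flag as needing care (that $X[n]^+$ is the wonderful compactification of $\{\Delta'_T:T\in Z_{n+1}^*\}$, and that products with $X$ or $\aline$ of building sets are building sets) are indeed routine but correct.
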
 
	
	When $\graph$ has no edges, $\xi_\graph$ and the downward arrow are isomorphisms. 
	
	\begin{proof}
		The diagram is necessarily commutative and unique since all varieties are canonically birational to $\aline \times X^{n+1}$. Hence, it suffices to construct any morphisms over $\aline \times X^{n+1}$.
		
		\begin{lemma}\label{lemma: enumerations satisfy Li's condition}
			Any of the following enumerations of a building set:
			\begin{enumerate}
			\item the enumeration of ${\mathcal P}_\graph$ corresponding to $\mathfrak{el}$ (see \S\ref{subsection: enumerations});
			\item the enumeration of ${\mathcal U}_\graph^\suf$ corresponding to $\mathfrak{lel^\star e^\star}$; or
			\item the enumeration of ${\mathcal U}_\graph^\suf$ corresponding to $\mathfrak{ee^\star L}$
			\end{enumerate}
			satisfies condition $(*)$ in \cite[Theorem 1.3]{[Li09]}.
		\end{lemma}
		
		\begin{proof}
		It is well-known that the FM enumerations ${\mathfrak e}$ and ${\mathfrak e}{\mathfrak e}^\star$ (in the context of $X^n$, though taking a direct product with $\aline$ doesn't change anything) satisfy condition $(*)$ in \cite[Theorem 1.3]{[Li09]}. We will focus on the two enumerations $\mathfrak{lel^\star e^\star}$ and $\mathfrak{ee^\star L}$ and the case $\graph = K_n$, since the other cases are similar but quite trivial. We will check that, given any `initial segment' $\mathfrak{s}$ in either enumeration ($\mathfrak{lel^\star e^\star}$ or $\mathfrak{ee^\star L}$), and any subset of indices $J$ in ${\mathfrak s}$, whose corresponding subvarieties intersect at  $F_J$, the minimal subvarieties in ${\mathfrak s}$ which contain $F_J$ intersect transversally. If $J$ contains only sets, no subvariety indexed by a partition can contain $F_J$, so we can invoke $(*)$ for the FM enumeration. If $J$ contains at least one partition, $F_J$ must have already appeared on the list of subvarieties indexed by ${\mathfrak s}$, so the requirement is satisfied in this case as well.
		\end{proof}
		
		In light of Lemma \ref{lemma: enumerations satisfy Li's condition}, \cite[Theorem 1.3]{[Li09]} gives sequences of morphisms:
		\begin{enumerate}
			\item $X[[\graph]] \to \aline \times X[\graph] \to \aline \times X^n$;
			\item $X[[\graph]]^\suf \to X[[\graph]]^\fuf \to X[[\graph]] \times X \to \aline \times X^{n+1}$;
			\item $X[[\graph]]^\suf \to \aline \times X[\graph]^\suf \to \aline \times X[\graph] \times X \to \aline \times X^{n+1}$
		\end{enumerate}
		corresponding respectively to the three enumerations in Lemma \ref{lemma: enumerations satisfy Li's condition}. For instance, for the first one, we start with $\aline \times X^n$ and blow up in order until we get $X[[\graph]]$ according to \cite[Theorem 1.3]{[Li09]}, but at some point in the sequence of blowups we will see $\aline \times X[\graph]$. The other two sequences of morphisms are analogous. 
	\end{proof}
	
	\begin{theorem}\label{theorem: polydiagonal degeneration}
		The following hold.
		\begin{enumerate}
			\item\label{item: item 1 in theorem: polydiagonal degeneration}
			Both $X[[\graph]]^\fuf$ and $X[[\graph]]^\suf$ (with the maps to $X[[\graph]]$ and $X$ obtained by composing the `diagonal' maps in Proposition \ref{proposition: commutative diagram} with the projections to the factors) are naive FM spaces of $X$ over $X[[\graph]]$.
			\item There exist unique smooth sections $ x^\suf_1,\ldots,x^\suf_n:X[[\graph]] \to X[[\graph]]^\suf$ of the map $X[[\graph]]^\suf \to X[[\graph]]$, such that the diagram
			\begin{center}
			\begin{tikzpicture}
				\node (a) at (0.5,0) {$\cs \times X^n$};
				\node (b) at (4,0) {$\cs \times \mathrm{F}(X,\graph)$};
				\node (c) at (7,0) {$X[\graph]$};
				\node (au) at (0.5,1.2) {$\cs \times X^n \times X$};
				\node (bu) at (4,1.2) {$\cs \times \mathrm{F}(X,\graph) \times X$};
				\node (cu) at (7,1.2) {$X[\graph]^\suf$};
				
				\draw[left hook->] (bu) -- (au); \draw[right hook->] (bu) -- (cu);
				\draw[left hook->] (b) -- (a); \draw[right hook->] (b) -- (c);
				\draw[->] (a) -- (au); \draw[->] (b) -- (bu); \draw[->] (c) -- node[right] {$x_i$} (cu);
			\end{tikzpicture}
			\end{center}
			where the left vertical arrow is the graph of the projection to the $i$-th $X$ factor, is commutative. Moreover,
			\begin{enumerate}
				\item\label{item: disjoint sections} $x_i^\suf(z) \neq x_j^\suf(z)$, for all $z \in X[[\graph]]$ and $i \neq j$ adjacent in $\graph$; and
				\item $x^\fuf_i = \xi_\graph \circ x^\suf_i$ are smooth sections of $X[[\graph]]^\fuf \to X[[\graph]]$.
			\end{enumerate}
			\item\label{item: combinatorics item in polydiagonal theorem} Let $z \in X[[\graph]](\cc)$ in the fiber of $0 \in \aline$. By Lemma \ref{lemma: nests in polydiagonal degeneration}, ${\mathcal H} = \{\rho \in L_n: z \in D_\rho^v \}$ is a chain in $L_n$ (we are implicitly relying on \cite[Theorem 1.2]{[Li09]}).
			\begin{enumerate}
				\item\label{item: combinatorics empty case item in polydiagonal theorem} Assume that $\graph$ has no edges. Then, the dual tree of $X [[\graph]]^\fuf_z$ is $T({\mathcal H})$, and so is the dual tree of $X [[\graph]]^\suf_z$, since the two coincide. 
				\item\label{item: combinatorics complete case item in polydiagonal theorem} Assume that $\graph$ is complete, and let ${\mathcal F} = \{S \subseteq [n]: |S| \geq 2, z \in D_S^h \}$. By Lemma \ref{lemma: nests in polydiagonal degeneration}, ${\mathcal H}$ and ${\mathcal F}$ are comparable (\S\ref{subsection: Fulton--MacPherson nests}). Then,
				\begin{enumerate}
					\item the dual tree of $X [[\graph]]^\fuf_z$ is $T({\mathcal H})$; and
					\item the dual tree of $X [[\graph]]^\suf_z$ is $T({\mathcal H},{\mathcal F})$.
				\end{enumerate}
				Moreover, if $\Lambda$ is an irreducible component of $X [[\graph]]^\suf_z$, then $\xi_\graph|_\Lambda$ is generically finite onto its image if and only if the corresponding vertex of $T({\mathcal H},{\mathcal F})$ has infinite or finite (non-zero) scaling (\S\ref{subsection: Fulton--MacPherson nests}), and, in that case, $\xi_\graph$ maps $\Lambda$ birationally to an irreducible component of $X [[\graph]]^\fuf_z$.
			\end{enumerate}
			\item\label{item: scaling in polydiagonal theorem} By item \ref{item: item 1 in theorem: polydiagonal degeneration} and Lemma \ref{lemma: defining property of root}, we may speak of the root relative dualizing sheaf on $X[[\graph]]^\fuf$. Then, there exists a (unique) homomorphism
			$$ \psi: \sqrt[d]{\omega} = \sqrt[d]{\omega}_{X[[\graph]]^\fuf,X/X[[\graph]]} \to {\sh O}_{ X[[\graph]]^\fuf } $$
			which maps to $t \in \cc[t,t^{-1}]$ under the restriction map
			$$ H^0( X[[\graph]]^\fuf ,  \sqrt[d]{\omega}^\vee ) \to H^0(\cs \times_{\aline} X[[\graph]]^\fuf,  \sqrt[d]{\omega}^\vee ) = H^0 ({\sh O}_{\cs \times X[\graph] \times X}) \cong { \mathbb C} [t,t^{-1}]. $$
			\item The homomorphisms $(x_1^\fuf)^*\psi,\ldots,(x^\fuf_n)^*\psi$ are isomorphisms.
		\end{enumerate}
	\end{theorem}
	
	(A parenthetical remark on $\psi$ for context: if $\epsilon$ is the distinguished section of $\sqrt[d]{\omega}$ alluded to in \S\ref{section: naive FM spaces}, and $\varpi:X[[\graph]]^\fuf \to X[[\graph]]$ is the projection, then it can be shown that $\psi \otimes \epsilon = \varpi^*t$.)
	
	\subsection{Proof of Theorem \ref{theorem: polydiagonal degeneration}}\label{subsection: proof of theorem}
	The proof is a long induction, which follows the sequence of blowups indexed by $\mathfrak{lel^\star e^\star}$ (Lemma \ref{lemma: enumerations satisfy Li's condition}) from $\aline \times X^{n+1}$ to $X[[\graph]]^\suf$. We split it into $3$ parts as follows:
	\begin{itemize}
		\item [I] the blowups indexed by $L_n$, and, if $\graph = K_n$, those indexed by $Z_n$; 
		\item [II] the blowups indexed by $L_{n+1} \backslash L_n \simeq U_n$ (Remark \ref{remark: bijection of complement}); 
		\item [III] only in the case $\graph = K_n$, the blowups indexed by $Z_{n+1}^*\backslash Z_n$. 
	\end{itemize}
	The three parts correspond respectively to the segments $\mathfrak{le}$, ${\mathfrak l}^\star$, and ${\mathfrak e}^\star$. 
	
	\subsubsection*{Proof of Theorem \ref{theorem: polydiagonal degeneration} -- part I} After performing the blowups of $\aline \times X^{n+1}$ indexed by $\mathfrak{le}$, we obtain $X[[\graph]] \times X$.
	
	Let $x_i^0: X[[\graph]] \to X[[\graph]] \times X$ be the graph of the composition of $X[[\graph]] \to X^n$ with the projection to the $i$-th factor $X^n \to X$, and $\Sigma_{i}^0 \subset X[[\graph]] \times X$ its image. For each $\varpi \in L_{n+1}$, let $G_\varpi^0 \subset X[[\graph]] \times X$ be the dominant transform of $\{0\} \times \Delta'_\varpi$ in the sequence of blowups from $\aline \times X^{n+1}$ to $X[[\graph]] \times X$. If $\varpi \in L_{n+1} \backslash L_n$ and $\varpi$ corresponds to $(\rho,B) \in U_n$ by Remark \ref{remark: bijection of complement}, we will also write $G^0_{\rho,B}$ for $G^0_\varpi$. If $\graph$ is complete, for each $T \in Z_{n+1}^*$, let $G^0_T \subset X[[\graph]] \times X$ be the dominant transform of $\aline \times \Delta'_T$ in the sequence of blowups from $\aline \times X^{n+1}$ to $X[[\graph]] \times X$.
	
	\begin{lemma}\label{lemma: more about after round 1 of blowups}
		\begin{enumerate}
			\item\label{item: item 1 in lemma: more about after round 1 of blowups} If $\varpi = \iota(\rho)$ with $\rho \in L_n$, then $G_\varpi^0 = D^v_\rho \times X$.
			\item\label{item: item 2 in lemma: more about after round 1 of blowups} If $\graph = K_n$ and $T \in Z_{n+1}^*$ does not contain $n+1$, then $G_T^0 = D^h_T \times X$.
			\item\label{item: item 3 in lemma: more about after round 1 of blowups} If  $(\rho,B) \in U_n$, then $ G_{\rho,B}^0 = x_i^0(D^v_\rho)$ for all $i \in B$.
			\item\label{item: item 4 in lemma: more about after round 1 of blowups} If $\graph = K_n$ and $T \in Z_{n+1}^*$ contains $n+1$, then $G^0_T = x_i^0(D^h_{T \backslash \{n+1\}})$ for all $i \in T$.
		\end{enumerate}
	\end{lemma}
	
	\begin{proof}
		Claims \ref{item: item 1 in lemma: more about after round 1 of blowups} and \ref{item: item 2 in lemma: more about after round 1 of blowups} are trivial. Claims \ref{item: item 3 in lemma: more about after round 1 of blowups} and \ref{item: item 4 in lemma: more about after round 1 of blowups} follow inductively using Lemma \ref{lemma: technical fact about dominant transforms}.
		\end{proof} 
	
	\subsubsection*{Proof of Theorem \ref{theorem: polydiagonal degeneration} -- part II} The morphism $X[[\graph]]^\fuf \to X[[\graph]] \times X$ was obtained explicitly as a sequence of blowups with smooth centers indexed by $L_{n+1} \backslash L_n$, by Proposition \ref{proposition: commutative diagram} and Lemma \ref{lemma: enumerations satisfy Li's condition}.
	
	\begin{definition}\label{definition: trimming}
	Given a rooted tree $T$ with legs attached only to leaves, a set of vertices $S$ is a \emph{star subset} if the shortest path from the root to $v$ is contained in $S$, for all $v \in S$. The \emph{trimming} of $T$ relative to $S$ is the subgraph whose set of vertices is $S$, whose edges are the edges of $T$ with both endpoints in $S$, and whose legs (half-edges) are obtained from the legs of $T$ with endpoints in $S$, and from the edges of $T$ with only one endpoint in $S$. 
	\end{definition}
	
	In particular, if ${\mathcal H} \subseteq L_n$ is a chain, and $T({\mathcal H})$ is its representation described in Definition \ref{definition: leveled tree representations}, then any initial segment in the enumeration ${\mathfrak l}^\star$ specifies a trimming of $T({\mathcal H})$, since the subset of vertices of $T({\mathcal H})$ whose label is in this initial segment is a star subset. 
	
	\begin{claim}\label{claim: big induction in round 2 of blowing up}
		Let $W$ be a blowup of $X[[\graph]] \times X$ in the sequence of blowups from $X[[\graph]] \times X$ to $X[[\graph]]^\fuf$. Let $\Sigma_i \subset W$ be the dominant transform of $\Sigma_i^0$.
		\begin{enumerate}
			\item\label{item: naive FM space in lemma: big induction in round 2 of blowing up} $(W \to X[[\graph]], W \to X)$ is a naive FM space of $X$ over $X[[\graph]]$.
			\item\label{item: sections remain sections in lemma: big induction in round 2 of blowing up} The composition $\Sigma_i \hookrightarrow W \to X[[\graph]]$ is an isomorphism.
			\item\label{item: sections are smooth in lemma: big induction in round 2 of blowing up} The morphism $W \to X[[\graph]]$ is smooth at all points of $\Sigma_i$.
		\end{enumerate}
		According to item \ref{item: sections remain sections in lemma: big induction in round 2 of blowing up}, $\Sigma_i$ is the image of a section $x_i:X[[\graph]] \to W$. Let $G_{\rho,B} \subset W$ be the dominant transform of $G_{\rho,B}^0$, for $(\rho,B) \in U_n$.
		\begin{enumerate}\setcounter{enumi}{3}
			\item\label{item: loci to be blown up are sections in lemma: big induction in round 2 of blowing up} $G_{\rho,B} = x_i(D^v_\rho)$ if $i \in B$ and the dominant transform of the polydiagonal corresponding to $(\rho,B)$ hasn't been blown up yet.
			\item\label{item: combinatorics in lemma: big induction in round 2 of blowing up} Let $z \in X[[\graph]]_0(\cc)$ and ${\mathcal H} = \{\rho \in L_n: z \in D^v_\rho\}$, which is a chain by Lemma \ref{lemma: nests in polydiagonal degeneration}. Then the dual tree of $W_z$ is the trimming of $T({\mathcal H})$ induced by the initial segment of ${\mathfrak l}^\star$ corresponding to $W$ (Definitions \ref{definition: leveled tree representations} and \ref{definition: trimming}). In particular, if a vertex of the trimmed tree has a leg towards a block which contains the number $i \in [n]$, then $x_i(z)$ lives on the component of $W_z$ which corresponds to the vertex. 
		\end{enumerate}
	\end{claim}
	
	The maps $W \to X[[\graph]]$ and $W \to X$ are obtained as the compositions of the blowdown $W \to X[[\graph]] \times X$ with the projections to the factors.
	
	\begin{proof}
		We proceed inductively. The base case $W = X[[\graph]] \times X$ is trivial, with the exception of item \ref{item: loci to be blown up are sections in lemma: big induction in round 2 of blowing up}, which follows from Lemma \ref{lemma: more about after round 1 of blowups}. We make the following assumption: the initial segment of ${\mathfrak l}^\star$ corresponding to $W$ is a union of fibers of $L_{n+1} \backslash L_n \to L_n$. Thus, at the next steps we have to blow up all loci $G_{\rho_W,B}$ with $B \in \blocks(\rho_W)$ for fixed $\rho_W \in L_n$. Lemma \ref{lemma: extreme shuffling} below guarantees among other things that there is no loss of generality in doing so.
		
		\begin{lemma}\label{lemma: extreme shuffling}
			If $B_1,B_2 \in \blocks(\rho_W)$ and $B_1 \neq B_2$, then $G_{\rho_W,B_1} \cap G_{\rho_W,B_2} = \emptyset$.
		\end{lemma}
	
		\begin{proof}
			Let $U_n \ni \varpi_i \leftrightarrow (\rho_W,B_i)$ by Remark \ref{remark: bijection of complement}. The enumeration $\mathfrak{lel^\star e^\star}$ is
			\begin{equation*}
				\underbrace{\ldots}_{{\mathfrak l}}\underbrace{\ldots}_{{\mathfrak e}}\underbrace{\ldots\varpi_1 \wedge \varpi_2 \ldots|\ldots\varpi_1\ldots\varpi_2\ldots}_{{\mathfrak l}^\star}\underbrace{\ldots}_{{\mathfrak e}^\star},
			\end{equation*}
			where the bar `$|$' marks the current state at $W$. Let us shuffle as follows:
			\begin{equation}\label{equation: extreme shuffling}
				\varpi_1\wedge\varpi_2\underbrace{\ldots \overbrace{ \textcolor{gray}{\varpi_1\wedge\varpi_2} }^{\text{missing}}\ldots}_{\text{${\mathfrak l}^\star$ before $|$}}\underbrace{\ldots}_{{\mathfrak l}}\underbrace{\ldots}_{{\mathfrak e}}| \underbrace{\ldots\varpi_1\ldots\varpi_2\ldots}_{ \text{ ${\mathfrak l}^\star$ after $|$} }  \underbrace{\ldots}_{{\mathfrak e}^\star}.
			\end{equation}
			Then, nothing has changed after the current state, and, more importantly, the (enumeration of ${\mathcal U}_\graph^+$ corresponding to) enumeration \eqref{equation: extreme shuffling} satisfies condition $(*)$ in \cite[Theorem 1.3]{[Li09]}. This can be checked by an argument similar to Lemma \ref{lemma: enumerations satisfy Li's condition}. Indeed, if the chosen intersection is not contained in the $0$-fiber, then the property to check is clear from the fact that the FM enumeration satisfies Li's condition; if the intersection is contained in the $0$-fiber, then it must have already appeared on our list, which can be checked by a straightforward case analysis. Then the lemma follows from \cite[Theorem 1.3]{[Li09]} and the fact that, since $\varpi_1,\varpi_2$ are not comparable in $L_{n+1}$, blowing up $\{0\} \times \Delta'_{\varpi_1 \wedge \varpi_2}$ in $\aline \times X^{n+1}$ separates (the proper transforms of) $\{0\} \times \Delta'_{\varpi_1}$ and $\{0\} \times \Delta'_{\varpi_2}$. 
		\end{proof}
		
		Assume that $G_{\rho_W,B_W}$ is to be blown up next. By items \ref{item: sections are smooth in lemma: big induction in round 2 of blowing up} and \ref{item: loci to be blown up are sections in lemma: big induction in round 2 of blowing up}, $G_{\rho_W,B_W}$ is a smooth section over a smooth divisor in $X[[\graph]]$. Moreover, we claim that
		\begin{equation}\label{equation: intersection cases}
			x_i(X[[\graph]]) \cap G_{\rho_W,B_W} = \begin{cases}
				x_i(D^v_{\rho_W}), & \text{if $i \in B_W$} \\
				\emptyset, & \text{if $i \notin B_W$}. \\
			\end{cases}
		\end{equation}
		Indeed, the case $i \in B_W$ follows from item \ref{item: loci to be blown up are sections in lemma: big induction in round 2 of blowing up}, whereas the case $i \notin B_W$ follows from item \ref{item: loci to be blown up are sections in lemma: big induction in round 2 of blowing up} and Lemma \ref{lemma: extreme shuffling}: if $B'$ is the block of $\rho_W$ which contains $i$, then
		\begin{equation*}
			\begin{aligned}
				x_i(X[[\graph]]) \cap G_{\rho_W,B_W} &= x_i(X[[\graph]]) \cap (D^v_{\rho_W} \times_{X[[\graph]]} W) \cap G_{\rho_W,B_W} \\
				&= x_i(D^v_{\rho_W}) \cap G_{\rho_W,B_W} = G_{\rho_W,B'} \cap G_{\rho_W,B_W} = \emptyset.
			\end{aligned}
		\end{equation*}
		In either case, $x_i(X[[\graph]]) \cap G_{\rho_W,B_W}$ is a possibly empty smooth effective Cartier divisor on $x_i(X[[\graph]])$, so $x_i(X[[\graph]])$ remains the image of a section after blowing up, call it $x_i':X[[\graph]] \to W'$, where $W' \to W$ is the blowup. Note also that $W' \to X [[\graph]]$ is smooth at these sections. Thus, we've already checked items \ref{item: naive FM space in lemma: big induction in round 2 of blowing up}, \ref{item: sections remain sections in lemma: big induction in round 2 of blowing up}, and \ref{item: sections are smooth in lemma: big induction in round 2 of blowing up}. Item \ref{item: loci to be blown up are sections in lemma: big induction in round 2 of blowing up} is clear since the equality of the generic points remains true if $(\rho,B) \neq (\rho_W,B_W)$. Item \ref{item: combinatorics in lemma: big induction in round 2 of blowing up} follows by construction and \eqref{equation: intersection cases}.
	\end{proof}
	
	Let $x_i^\tfuf:X [[\graph]] \to X [[\graph]]^\fuf$ and $\Sigma^\fuf_i \subset X[[\graph]]^\fuf$ denote the section $x_i$ and the subvariety $\Sigma_i$ in Claim \ref{claim: big induction in round 2 of blowing up} at the final step $W = X [[\graph]]^\fuf$. It will turn out that $x_i^\tfuf = x_i^\fuf$, but we don't know this yet, so we use the notation $x_i^\tfuf$ for now.
	
	The same induction as in Claim \ref{claim: big induction in round 2 of blowing up} allows us to define the homomorphism
	\begin{equation}\label{equation: final scaling}
		\psi:\sqrt[d]{\omega}_{X[[\graph]]^\fuf,X/X[[\graph]]} \to {\sh O}_{X[[\graph]]^\fuf}.
	\end{equation}
	Specifically, we start with $\psi_{X[[\graph]] \times X}$ on $X[[\graph]] \times X$ corresponding to the pullback of the regular function $t$ on $\aline$ along $X[[\graph]] \times X \to \aline$. The property we need to check inductively simultaneously with constructing the homomorphism is the following.
	
	\begin{claim}
		If $\psi_W:\sqrt[d]{\omega}_{W,X/X[[\graph]]} \to {\sh O}_{W}$ is the map on $W$, and the blowup indexed by $(\rho,B)$ hasn't been performed yet, then 
		\begin{equation}\label{equation: inclusion of image in lone item} \mathrm{Im}(\psi_W) \subseteq {\sh I}_{G_{\rho,B},W}, \end{equation} 
		where $G_{\rho,B}$ is the dominant transform of $G^0_{\rho,B}$ on $W$.
	\end{claim}
	
	\begin{proof}
		Equation \eqref{equation: inclusion of image in lone item} ensures that Lemma \ref{lemma: lifting vector fields naively} can be applied to produce $\psi_{W'}$, where $W'$ is the next blowup in the sequence. The base case of \eqref{equation: inclusion of image in lone item} is trivial by construction. It is clear that \eqref{equation: inclusion of image in lone item} continues to hold on $W'$ because the desired vanishing obviously continues to hold at least at the generic point of $G_{\rho,B}$.
	\end{proof}
	
	This completes the construction of \eqref{equation: final scaling}. For simplicity, we write $\sqrt[d]{\omega}$ instead of $\sqrt[d]{\omega}_{X[[\graph]]^\fuf,X/X[[\graph]]}$ for the rest of this section.
	
	\begin{lemma}\label{lemma: scaling is nonvanishing at markings}
		The map $(x_i^\tfuf)^*\psi:(x_i^\tfuf)^*\sqrt[d]{\omega} \to {\sh O}_{X[[\graph]]}$ is an isomorphism.
	\end{lemma}
	
	\begin{proof}
		We proceed inductively again.
		
		\begin{claim}\label{claim: lone item 2 about scaling following lemma} Let $W$ be a blowup of $X[[\graph]] \times X$ in the sequence of blowups in Claim \ref{claim: big induction in round 2 of blowing up} and $x_{i,W}:X[[\graph]] \to W$ the sections on $W$. If the blowup indexed by $(\rho,B)$ has been performed already, then the restriction of $\psi_W$ to $x_{i,W}(D^v_\rho)$ is not identically $0$ for any $i \in B$.
		\end{claim}
		
		\begin{proof}
		The base case is vacuous. Let $W'$ be next variety in the sequence, obtained by performing the blowup $\beta:W' \to W$ corresponding to $(B,\rho)$, with exceptional divisor $E \subset W'$. It suffices to check Claim \ref{claim: lone item 2 about scaling following lemma} for the pair $(B,\rho)$ only. It is clear that $\mathrm{Im}(\psi_W)$ is not contained in ${\sh I}^2_{x_{i,W}(D^v_\rho),W}$ -- if it was the case, we could remove the exceptional loci of all blowups so far and find that the same holds at least in a suitable open in $X[[\graph]] \times X$ (note that $x_{i,W}(D^v_\rho)$ is not contained in the union of the removed exceptional divisors), but this is clearly not true given the definition of $\psi_0$. It follows from Lemma \ref{lemma: lifting vector fields naively} that
		\begin{equation}\label{equation: not 0 on exceptional at next step}
			\psi_{W'}|_E \neq 0.
		\end{equation}
		Clearly, $E \to D^v_{\rho}$ is a projective bundle, and, for any $z \in D^v_{\rho}(\cc)$,
		\begin{equation}\label{equation: restriction of dual root to fiber of exceptional}
			\sqrt[d]{\omega}_{W',X/X[[\graph]]}^\vee |_{E_z} \simeq {\sh O}_{E_z}(1),
		\end{equation}
		by Lemma \ref{lemma: defining property of root}. Moreover, $\psi_{W'}$ vanishes along the hyperplane $H \subset E_z$ where $E_z$ intersects the proper transform of $W_0$ because $\psi_W$ vanishes in a neighbourhood in $W_0$ of $x_{i,W}(D^v_\rho)$, which in turn follows from the definition of $\psi_0$ and the fact that $W_z \simeq X$ for general $z \in D^v_{\rho}(\cc)$. Also, note that $H$ is precisely the subset of $E_z$ where $W'_0$ is singular, so in particular $x_{i,W'}(z) \notin H$ by item \ref{item: sections are smooth in lemma: big induction in round 2 of blowing up} in Claim \ref{claim: big induction in round 2 of blowing up}. Thus, if $\psi_{W'}$ also vanishes at $x_{i,W'}(z)$, then it must vanish everywhere on $E_z$ by \eqref{equation: restriction of dual root to fiber of exceptional}. In conclusion, if Claim \ref{claim: lone item 2 about scaling following lemma} is false, then \eqref{equation: not 0 on exceptional at next step} is false, contradiction.
		\end{proof}
		
		Now consider the final case $W = X [[\graph]]^\fuf$. By Claim \ref{claim: lone item 2 about scaling following lemma} above and the obvious fact that $(x_i^\tfuf)^*\psi$ is an isomorphism away from the central fiber, $(x_i^\tfuf)^*\psi$ is an isomorphism away from a codimension $2$ subvariety of $X [[\graph]]$, hence an isomorphism.
	\end{proof}
	
	Claim \ref{claim: big induction in round 2 of blowing up} and Lemma \ref{lemma: scaling is nonvanishing at markings} complete the proof of Theorem \ref{theorem: polydiagonal degeneration} in the case $\graph = K_n^c$, and much of it in the case $\graph = K_n$, specifically, everything about $X[[K_n]]^\fuf$ pending establishing $x_i^\fuf = x_i^\tfuf$.
		
	\subsubsection*{Proof of Theorem \ref{theorem: polydiagonal degeneration} -- part III} 	Only some claims in the case $\graph=K_n$ still remain to be checked. Assume that $\graph = K_n$, and consider the sequence of blowups from $X[[K_n]]^\fuf$ to $X[[K_n]]^\suf$. The fact that the total space remains a naive FM degeneration space over $X[[K_n]]$, the fact that the sections $x_i$ continue to lift all the way to $X[[K_n]]^\suf$, claim \ref{item: disjoint sections}, as well as the description of the dual tree of a fiber follow by an inductive argument similar to the well-known argument for $X[n]$, which is detailed below.
	
	\begin{claim}\label{claim: big induction in round 3 of blowing up}
		Let $W$ be a variety in the sequence of blowups from $X[[\graph]]^\fuf$ to $X[[\graph]]^\suf$. Let $\Sigma_{i,W} \subset W$ be the dominant transform of $\Sigma^\fuf_i \subset X[[\graph]]^\fuf$.
		\begin{enumerate}
			\item\label{item: naive FM space in lemma: big induction in round 3 of blowing up} $(W \to X[[\graph]], W \to X)$ is a naive FM space of $X$ over $X[[\graph]]$.
			\item\label{item: sections remain sections in lemma: big induction in round 3 of blowing up} The composition $\Sigma_{i,W} \hookrightarrow W \to X[[\graph]]$ is an isomorphism.
			\item\label{item: sections are smooth in lemma: big induction in round 3 of blowing up} The morphism $W \to X[[\graph]]$ is smooth at all points of $\Sigma_{i,W}$.
		\end{enumerate}
		By item \ref{item: sections remain sections in lemma: big induction in round 2 of blowing up}, $\Sigma_{i,W}$ is the image of a section $x_{i,W}:X[[\graph]] \to W$. Let $F_S \subset W$ be the dominant transform of $\aline \times \Delta'_{S \cup \{n+1\}} \subset \aline \times X^{n+1}$, for each $S \subseteq [n]$ nonempty.
		\begin{enumerate}\setcounter{enumi}{3}
			\item\label{item: loci to be blown up are sections in lemma: big induction in round 3 of blowing up} $F_S = x_{i,W}(D^h_S)$ if $|S| \geq 2$, $i \in S$, and the dominant transform of the variety $\aline \times \Delta'_{S \cup \{n+1\}} \in {\mathcal U}_\graph^\suf$  hasn't been blown up yet.
			\item\label{item: combinatorics in lemma: big induction in round 3 of blowing up} Let $z \in X[[\graph]]_0(\cc)$, $ {\mathcal H} = \{ \rho \in L_n: z \in D^v_\rho \}$, and ${\mathcal F} = \{ S \in Z_n : z \in D^h_S \}$. Let ${\mathcal F}_W \subseteq {\mathcal F}$ be the set of $S \in {\mathcal F}$ such that the dominant transform of the variety $\aline \times \Delta'_{S \cup \{n+1\}} \in {\mathcal U}_\graph^\suf$ has already been blown up. By Lemma \ref{lemma: nests in polydiagonal degeneration}, $({\mathcal H},{\mathcal F})$ is a comparable pair in the sense of \S\ref{subsection: Fulton--MacPherson nests}, and hence so is $({\mathcal H},{\mathcal F}_W)$. Then, the dual tree of $W_z$, the fiber of $W$ over $z$, is $T({\mathcal H},{\mathcal F}_W)$. 
		\end{enumerate}
	\end{claim}
	
	\begin{proof}
		All claims in the base case $W = X[[\graph]]^\fuf$ are either trivial, or follow from the results of part II of this proof. Assume that the claim holds for $W$, and let's prove it for the next variety $W'$ in the sequence of blowups. Assume that $S_W \subseteq [n]$ is such that $W'$ is obtained from $W$ by blowing up $F_{S_W}$.
		
		\begin{lemma}\label{lemma: remaining issue}
			If $i \in [n] \backslash S_W$, then the dominant transforms of $\aline \times \Delta'_{\{i,n+1\}}$ and $\aline \times \Delta'_{S_W \cup \{n+1\}}$ on $W$ are disjoint.
		\end{lemma}
		
		\begin{proof}
			We will give an argument by reducing to the analogous statement in the Fulton--MacPherson setup. Let $Y = W_1$, the fiber of $W \to \aline$ over $1$. Then $Y$ is the variety `analogous' to $W$ in the sequence of blowups from $X[n] \times X$ to $X[n]^+$.  
			
			For each nonempty subset $S \subseteq [n]$, we have the commutative diagram
		\begin{center}
			\begin{tikzpicture}[scale = 1.2]
				\node (a) at (3,0) {$\aline \times X [n] \times X$};
				\node (b) at (0.5,0) {$X [[K_n]]^\fuf$};
				\node (d) at (3,1) {$\aline \times Y$};
				\node (e) at (0.5,1) {$W$};
				\node (d') at (3,2) {$\aline \times X [n]^+$};
				\node (e') at (0.5,2) {$X [[K_n]]^\suf$};
				\node (f) at (-1.5,1) {\stackon[-8pt]{$\aline \times \Delta'_{S \cup \{n+1\}}$}{\vstretch{1.5}{\hstretch{3.4}{\widetilde{\phantom{\;\;\;\;\;\;\;\;}}}}}};
				\node (f') at (5.2,1) {$\aline \times $\stackon[-8pt]{$\Delta'_{S \cup \{n+1\}}$}{\vstretch{1.5}{\hstretch{2.2}{\widetilde{\phantom{\;\;\;\;\;\;\;\;}}}}}};
				
				\draw [->] (b) -- (a); 
				\draw [->] (e) -- (d);  \draw [->] (e') -- (d'); 
				\draw [->] (e') -- (e);  \draw [->] (d') -- (d);
				\draw [->] (d) -- (a); \draw [->] (e) -- (b);
				\draw[right hook->] (f) -- (e); \draw[left hook->] (f') -- (d);
				\draw[->,dotted] (f) to [out=15, in = 165] (f');
			\end{tikzpicture}
		\end{center}
	in which the tall commutative square comes from Proposition \ref{proposition: commutative diagram}, while the map $W \to \aline \times Y$ can be obtained by precisely the same argument used in the proof of Proposition \ref{proposition: commutative diagram} to construct the map $X[[K_n]]^\suf \to \aline \times X[n]^+$, by simply using the suitable shorter enumeration. Commutativity in the two central columns is once again clear, because all varieties are naturally schemes over $\aline \times X^{n+1}$ by birational morphisms. The image in $\aline \times Y$ of the dominant transform on $W$ of $\aline \times \Delta'_{S \cup \{n+1\}}$ is the direct product of $\aline$ with the dominant transform on $Y$ of $\Delta'_{S \cup \{n+1\}}$. Indeed, this is trivial in the complement of the central fiber, hence the generic point of the former maps to generic point of the latter (both are irreducible), so the statement must hold. 
	
	In particular, if the dominant transforms on $Y$ of $\Delta'_{S_1 \cup \{n+1\}}$ and $\Delta'_{S_2 \cup \{n+1\}}$ are disjoint, then the dominant transforms on $W$ of $\aline \times \Delta'_{S_1 \cup \{n+1\}}$ and $\aline \times \Delta'_{S_2 \cup \{n+1\}}$ are disjoint as well. Thus, to prove the lemma, it suffices to prove that the dominant transforms of $\Delta'_{\{i,n+1\}}$ and $\Delta'_{S_W \cup \{n+1\}}$ on $Y$ are disjoint, which follows from \cite[Proposition 3.3]{[FM94]}. Indeed, if we let $S_W \cup \{n+1\}$, $\{i,n+1\}$, $n-|S_W|$ play the roles of $S'$, $T'$, $k$ from loc. cit. respectively, then $Y_k$ of loc. cit. is either our $Y$ or (more often) a blowdown of our $Y$, and we are done. 
		\end{proof}
		
		For each $i \in [n]$, we claim that
		\begin{equation}\label{equation: possible intersections in round 3}
			x_{i,W}(X[[\graph]]) \cap F_{S_W} =
			\begin{cases}
			x_{i,W}(D^h_{S_W}) & \text{if $i \in S_W$} \\
			\emptyset & \text{if $i \notin S_W$}.
			\end{cases}
		\end{equation} 
		If $i \in S_W$, \eqref{equation: possible intersections in round 3} follows from \ref{item: loci to be blown up are sections in lemma: big induction in round 3 of blowing up}. Recall that $x_{i,W}(X[[\graph]]) = \Sigma_{i,W}$ is the dominant transform of $\aline \times \Delta'_{\{i,n+1\}}$ on $W$, so, if $i \notin S_W$, \eqref{equation: possible intersections in round 3} follows from Lemma \ref{lemma: remaining issue}. Thus, the center of the next blowup is either disjoint from $\Sigma_{i,W}$, or is a Cartier divisor on $\Sigma_{i,W}$. Now the inductive step is easy: item \ref{item: naive FM space in lemma: big induction in round 3 of blowing up} for $W'$ follows from items \ref{item: naive FM space in lemma: big induction in round 3 of blowing up}, \ref{item: sections are smooth in lemma: big induction in round 3 of blowing up} and \ref{item: loci to be blown up are sections in lemma: big induction in round 3 of blowing up} for $W$; items \ref{item: sections remain sections in lemma: big induction in round 3 of blowing up} and \ref{item: sections are smooth in lemma: big induction in round 3 of blowing up} for $W'$ follow from the corresponding items for $W$ and \eqref{equation: possible intersections in round 3}, \ref{item: loci to be blown up are sections in lemma: big induction in round 3 of blowing up} is clear, since the claimed equality clearly remains true on the level of generic points, so it must still hold, and \ref{item: combinatorics in lemma: big induction in round 3 of blowing up} is clear by construction.
	\end{proof}
	
	We define $x_i^\suf$ to be $x_{i,W}$ at the final step $W=X[[K_n]]^\suf$. By construction, we have $x^\tfuf_i = \xi_\graph \circ x^\suf_i = x^\fuf_i$. It only remains to check that the sections $x_i^\suf$ are pairwise disjoint, which can be checked using the same idea as in the proof of Lemma \ref{lemma: remaining issue}. Indeed, the morphism $X[[K_n]]^\suf \to \aline \times X[n]^+$ from Proposition \ref{proposition: commutative diagram} must map the sections $x_i^\suf(X[[K_n]]) \subset X[[K_n]]^\suf$ to the Fulton--MacPherson sections (direct product with $\aline$) in $\aline \times X[n]^+$ since it trivially does so in the complement of the central fiber, and hence necessarily everywhere. Since the latter sections are pairwise disjoint, so must be the former, completing the proof.
	
	\section{The small resolutions}\label{section: small resolutions}
	
	Let $C$ be any smooth complex projective curve. It is convenient for exposition (though mathematically completely unimportant) to assume that $C$ is not rational. We will use the results of \S\ref{section: polydiagonal degenerations} for $X=C$. We will keep the notation from \S\ref{subsection: polydiagonal degenerations} and \S\ref{subsection: universal families}, but not from \S\ref{subsection: proof of theorem}.

	\subsection{Stabilizing the curves}\label{subsection: stabilizing the curves}
	By Theorem \ref{theorem: polydiagonal degeneration} and Lemma \ref{lemma: only flatness}, $C[[\graph]]^\fuf \to C[[\graph]]$ and $C[[\graph]]^\suf \to C[[\graph]]$ are prestable curves. In this section, we stabilize them (and their `scaling' structures) by contracting suitable rational bridges. A \emph{rational bridge} is an irreducible component of a nodal curve over $\spec \cc$ which is smooth, rational, and contains precisely two nodes of the curve.
	
	There are $2 \times 2 = 4$ cases in principle, since $\graph$ is either $K_n^c$ or $K_n$, and the decoration `$\hsymb$' on $C[[\graph]]^\hsymb$ is either $\fuf$ or $\suf$, but in fact only $3$ families of curves, since $C[[K_n^c]]^\fuf = C[[K^c_n]]^\suf $. The case $C[[K_n]]^\fuf$ is combinatorially similar to the two identical cases with $\graph = K_n^c$. By Theorem \ref{theorem: polydiagonal degeneration}, in all cases so far, the dual tree of any fiber $C[[\graph]]^\hsymb_z$, $z \in C[[\graph]]_0(\cc)$, is a tree of the form $T({\mathcal H})$ (Definition \ref{definition: leveled tree representations}). We refer to the (rational) irreducible components of $C[[\graph]]^\hsymb_z$ as \emph{terminal} or \emph{non-terminal} depending on whether the corresponding vertex in the dual tree is a leaf or not. We will refer to these cases as \emph{type A} cases. The remaining case of the curve $C[[K_n]]^+ \to C[[K_n]]$ (so $\hsymb = +$, $\graph = K_n$) is combinatorially the most complicated, and we will refer to it as the \emph{type B} case. In this case, the dual tree of any fiber $C[[K_n]]^\suf_z$, $z \in C[[K_n]]_0(\cc)$, is a tree of the form $T({\mathcal H},{\mathcal F})$ (Definition \ref{definition: combined leveled tree representations}), again by Theorem \ref{theorem: polydiagonal degeneration}. We refer to the irreducible components of the fiber as \emph{infinite scaling}, \emph{finite scaling}, and \emph{zero scaling} components correspondingly with the vertex in the dual tree (\S\ref{subsection: Fulton--MacPherson nests}). For convenience, if $z \notin C[[K_n]]_0$, we will sometimes refer to all rational irreducible components of $C[[K_n]]^\suf_z$ as $0$-scaling components. 
	\begin{center}
	\begin{tabular}{c|c|c|c}
		Cases & $(\graph,\hsymb)$ & combinatorics & components \\
		\hline
		Type A & $(K_n^c,\fuf)$, $(K_n^c,\suf)$, $(K_n,\fuf)$ & $T({\mathcal H})$ & terminal/non-terminal \\ 
		Type B & $(K_n,\suf)$ & $T({\mathcal H},{\mathcal F})$ & $\infty$/finite/$0$-scaling \\ 
	\end{tabular}
	\end{center}
	Note that non-terminal or infinite scaling components contain no marked points. 
	
	\begin{definition}\label{definition: components to be contracted}
		Let $\hsymb \in \{\fuf,\suf\}$ and $z \in C[[\graph]](\cc)$. Let $R \simeq {\mathbb P}^1$ be an irreducible component of $C[[\graph]]^\hsymb_z$. For $z \in C[[\graph]]_0(\cc)$, $R$ is a \emph{component to be contracted} if it is a rational bridge, and either non-terminal (Type A case) or of infinite scaling (Type B case). There are no components to be contracted for $z \notin C[[\graph]]_0(\cc)$.
	\end{definition}
	
	In particular, the components to be contracted contain no marked points.
	
%	\begin{remark}\label{remark: to be contracted is disjoint union of chains}
%		Note that the union of all components to be contracted in any given fiber is a disjoint union of rational chains. 
%	\end{remark}
	
	\begin{proposition}\label{proposition: stabilization of universal curve}
		For each symbol $\hsymb \in \{\fuf,\suf\}$, there exists a prestable curve $Y^\hsymb$ over $C[[\graph]]$ and a morphism with property R (Definition \ref{definition: rational contraction}) 
		\begin{equation} f^\hsymb:C[[\graph]]^\hsymb \to Y^\hsymb, \end{equation}
		such that, for each closed point $z \in C[[\graph]]$, $f_z^\hsymb$ contracts all components to be contracted (Definition \ref{definition: components to be contracted}) in $C[[\graph]]_z^\hsymb$, and nothing else. Moreover, $f^\hsymb x_i^\hsymb$ is a smooth section of $Y^\hsymb$ over $C[[\graph]]$.
	\end{proposition}
	
	\begin{proof}
		Let $\zeta: C[[\graph]]^\hsymb \to C[[\graph]]^\fuf$ be the map $\xi_{K_n}$ from Proposition \ref{proposition: commutative diagram} in the Type B case, respectively the identity map in the Type A cases, and $\kappa: C[[\graph]]^\hsymb \to C$ the natural morphism. Let ${\sh J}$ be a line bundle on $C$ of large degree, and
		\begin{equation}\label{equation: crucial line bundle} {\sh L} = \omega_{C[[\graph]]^\hsymb/C[[\graph]]}^{\otimes 2} \otimes \zeta^*\omega_{C[[\graph]]^\fuf/C[[\graph]]}^{\vee} \otimes {\sh O}_{C[[\graph]]^\hsymb} \left( 2\sum_{i=1}^n x_i^\hsymb(C[[\graph]]) \right) \otimes \kappa^*{\sh J}. \end{equation}
		
		\begin{lemma}\label{claim: nonnegative degrees claim}
			Let $z \in C [[\graph]](\cc)$ and $R$ an irreducible component of $C[[\graph]]_z^\hsymb$. Then, $ \deg {\sh L}|_R \geq 0$, with equality if and only if $R$ is a component to be contracted.
		\end{lemma}
		
		\begin{proof}
			This is clear if $R \simeq C$ since $\deg {\sh J}$ is large, so assume $R \simeq {\mathbb P}^1$. Let $n_R$ be the number of nodes of $C[[\graph]]_z^\hsymb$ on $R$, and $m_R$ the number of marked points on $R$.
			
			\emph{Type A cases.} Note that \eqref{equation: crucial line bundle} can be written more naturally as
			$$ {\sh L} = \omega_{C[[\graph]]^\hsymb/C[[\graph]]}\left( 2\sum_{i=1}^n x_i^\hsymb(C[[\graph]]) \right) \otimes \kappa^*{\sh J}. $$
			in this case. Note also that $z \in C[[\graph]]_0$ by necessity. We have
			\[ \deg {\sh L}|_R = n_R-2+2m_R \geq 0, \]
			with equality if and only if $R$ is to be contracted, by item \ref{item: combinatorics item in polydiagonal theorem} in Theorem \ref{theorem: polydiagonal degeneration} and Definition \ref{definition: leveled tree representations}.
			
			\emph{The type B case.} The case $z \notin C[[K_n]]_0$ is straightforward. Indeed, by \eqref{equation: crucial line bundle},
			\[ \deg {\sh L}|_R = 2(n_R-2) - 0+ 2m_R + 0 > 0, \]
			which amounts to the well-known fact that any rational irreducible component of any fiber of $C[n]^+ \to C[n]$ has at least three `special points', where both nodes and marked points are considered special. It remains to analyze the case $z \in C[[K_n]]_0$. If $R$ has infinite scaling, then, by \eqref{equation: crucial line bundle},
			$$ \deg {\sh L}|_R = 2(n_R-2) - (n_R-2)+2m_R = n_R - 2 \geq 0, $$
			with equality if and only if $R$ is to be contracted. If $R$ has finite scaling, then
			\[ \deg {\sh L}|_R = 2(n_R-2) - (-1) +2m_R = 2n_R+2m_R-3 > 0, \]
			since, if $n_R=1$ (that is, $R$ intersects no zero scaling components), then $R$ must contain at least one marked point. Finally, if $R$ has zero scaling, then
			\[ \deg {\sh L}|_R = 2(n_R-2) - 0 +2m_R = 2(n_R+m_R-2) >0, \]
			since $n_R+m_R \geq 3$ is clear from item \ref{item: combinatorics complete case item in polydiagonal theorem} in Theorem \ref{theorem: polydiagonal degeneration} and Definition \ref{definition: combined leveled tree representations}.
		\end{proof}
		
		In light of Definition \ref{definition: components to be contracted}, item \ref{item: combinatorics complete case item in polydiagonal theorem} in Theorem \ref{theorem: polydiagonal degeneration}, and especially Lemma \ref{claim: nonnegative degrees claim}, we may apply Proposition \ref{proposition: contracting bridges semi-general} to conclude the existence of $f^\hsymb$. The smoothness of $f^\hsymb x_i^\hsymb$ follows from the earlier remark that components to be contracted contain no marked points. Finally, $f^\hsymb$ has property R by Lemma \ref{lemma: has property R in general} (or Remark \ref{remark: has property R in general}). 
	\end{proof}
	
	Combinatorially, stabilizing the curves amounts to passing from $T({\mathcal H})$ or $T({\mathcal H},{\mathcal F})$ to $T^s({\mathcal H})$ or $T^s({\mathcal H},{\mathcal F})$ (please see \S\ref{section: combinatorial language}). The irreducible components of the curves $Y^\hsymb_z$ inherit the descriptions terminal/non-terminal, respectively $\infty$/finite/$0$-scaling from the corresponding components of $C[[\graph]]^\hsymb_z$. Our `finite scaling components' are the `transition components' from \cite[\S2]{[GSW17]}. 
	
	The lemma below is relevant only when $\graph$ is complete. 
	
	\begin{lemma}
		There exists a (unique) $C[[\graph]]$-morphism $\mu:Y^\suf \to Y^\fuf$ such that
		\begin{center}
		\begin{tikzpicture}
			\node (sw) at (3,1.4) {$C[[\graph]]^\fuf$}; 
			\node (nw) at (0,1.4) {$C[[\graph]]^\suf$};
			\node (se) at (3,0) {$Y^\fuf$}; 
			\node (ne) at (0,0) {$Y^\suf$};
			
			\draw [->] (nw) --node[above] {$\xi_\graph$} (sw); 
			\draw [->] (ne) --node[above] {$\mu$} (se); 
			\draw [->] (nw) --node[left] {$f^\suf$} (ne); 
			\draw [->] (sw) --node[right] {$f^\fuf$} (se);
		\end{tikzpicture}
	\end{center}
	is a commutative diagram of prestable curves over $C[[\graph]]$, and all morphisms in the diagram have property R (Definition \ref{definition: rational contraction}). Moreover, if $\graph=K_n$, for any $z \in C[[\graph]] (\cc)$, $\mu_z$ contracts all zero scaling components of $Y^\suf_z$ and nothing else.
	\end{lemma}
	
	\begin{proof}
		By item \ref{item: combinatorics complete case item in polydiagonal theorem} in Theorem \ref{theorem: polydiagonal degeneration} and Lemma \ref{lemma: has property R in general}, $\xi_\graph$ has Property R. In particular, this and Proposition \ref{proposition: stabilization of universal curve} imply
		\begin{equation}\label{equation: pushforward for BM cite} f^\fuf_* (\xi_\graph)_* {\sh O}_{C[[\graph]]^\suf} = f^\fuf_*{\sh O}_{C[[\graph]]^\fuf} = {\sh O}_{Y^\fuf}. \end{equation}
		By Proposition \ref{proposition: stabilization of universal curve}, $f^\suf_z$ is constant on the fibers of $(f^\fuf \xi_\graph)_z$ for any $z \in C[[\graph]] (\cc)$, so, with \eqref{equation: pushforward for BM cite} in mind, \cite[Lemma 2.2]{[BM96]} gives a $C[[\graph]]$-morphism $\mu:Y^\suf \to Y^\fuf$ such that $\mu f^\suf = f^\fuf \xi_\graph$. The last claim is clear by construction, and then it can be checked on fibers (Lemma \ref{lemma: has property R in general}) that $\mu$ is a has property R.
	\end{proof}
	
	From now on, we will often write $\pi_{A/B}$ for a map $A \to B$ that is `naturally' a projection. Context should leave no reasonable doubts about what the map is.
	
	We also need to check that the `scaling' descends along the maps with property R in Proposition \ref{proposition: stabilization of universal curve}, which requires some technicalities from \cite{[Za21]}. By Lemma \ref{lemma: defining property of root},
	\begin{equation}\label{equation: trivial hom isomorphism}
		{\sh H}om ( \omega_{C[[\graph]]^\fuf /C[[\graph]]}, \pi_{C[[\graph]]^\fuf/C}^*\omega_C ) = {\sh H}om (\omega_{C[[\graph]]^\fuf,C/C[[\graph]]},{\sh O}_{C[[\graph]]^\fuf} ),
	\end{equation}
	so we may think of $\psi$ (Theorem \ref{theorem: polydiagonal degeneration}) as a global section of either.
	
	\begin{proposition}\label{proposition: descending scaling}
		There exists a (unique) ${\sh O}_{Y^\fuf}$-module homomorphism
		\begin{equation*}
			\phi: \omega_{Y^\fuf/C [[\graph]]} \to \pi_{Y^\fuf/C}^*\omega_C
		\end{equation*}
		such that the restrictions of $\psi$ (seen as a global section of the left hand side of \eqref{equation: trivial hom isomorphism}) and $\phi$ to the complement of the exceptional locus of $f^\fuf$ and the isomorphic image in $Y^\fuf$ coincide. Moreover, for any $z \in C[[\graph]](\cc)$, $\phi_z$ is equal to $0$ on a rational irreducible component of $Y^\fuf_z$ if and only if that component is not terminal, and it is always nonzero at $f^\fuf(x_i^\fuf(z))$ for $i=1,\ldots,n$.
	\end{proposition}
	
	\begin{proof} By Proposition \ref{proposition: stabilization of universal curve}, \cite[Lemma 3.7]{[Za21]}, and Lemma \ref{lemma: defining property of root},
		\begin{equation}\label{equation: some calculations with omega}
			\begin{aligned}
				(f^\fuf)^!(\omega_{Y^\fuf/C [[\graph]]} \otimes \pi_{Y^\fuf/C}^*\omega_C^\vee)
				&=\omega_{C[[\graph]]^\fuf/C[[\graph]]} \otimes (f^\fuf)^*\omega_{Y^\fuf/C[[\graph]]}^\vee \otimes (f^\fuf)^*(\omega_{Y^\fuf/C [[\graph]]} \otimes \pi_{Y^\fuf/C}^*\omega_C^\vee)  \\
				&= \omega_{C [[\graph]]^\fuf/C [[\graph]]} \otimes \pi_{C [[\graph]]^\fuf/C}^*\omega_C^\vee =\omega_{C [[\graph]]^\fuf,C/C [[\graph]]}.
			\end{aligned}
		\end{equation}
		With the `<' notation from \cite[Definition 3.12]{[Za21]}, \eqref{equation: some calculations with omega} reads $(f^\fuf)^{\text{<}}(\omega^\vee_{Y^\fuf/C [[\graph]]} \otimes \pi_{Y^\fuf/C}^*\omega_C) = \omega^\vee_{C [[\graph]]^\fuf,C/C [[\graph]]}$. 
		By \cite[Proposition 3.14]{[Za21]}, we obtain a homomorphism
		\begin{equation*}
			f^\fuf_*\omega^\vee_{C [[\graph]]^\fuf,C/C [[\graph]]} \to \omega^\vee_{Y^\fuf/C [[\graph]]} \otimes \pi_{Y^\fuf/C}^*\omega_C.
		\end{equation*}
		We define $\phi$ to be the image of $\psi$ under the homomorphism above. The defining property of $\phi$ follows from \cite[Proposition 3.14]{[Za21]}, and uniqueness is trivial. The claims regarding the vanishing of $\phi_z$ follow from the analogous facts for $\psi_z$ (Theorem \ref{theorem: polydiagonal degeneration}) and the definition of $f^\fuf$ (Proposition \ref{proposition: stabilization of universal curve}).
	\end{proof}
	
	Proposition \ref{proposition: descending scaling} descends the vector field from $C[[\graph]]^\fuf$ to $Y^\fuf$, which will suffice for Theorem \ref{theorem: small resolution theorem}. However, for the purposes of Theorem \ref{theorem: small resolution theorem 2}, we need to rearrange this in terms of $C[[\graph]]^\suf$ and $Y^\suf$, which is done in Proposition \ref{proposition: descending scaling 2} below. 
	
	\begin{proposition}\label{proposition: descending scaling 2}
		Assume that $\graph$ is complete. Let $P$ be the ${\mathbb P}^1$-bundle
		$$ P = \underline{\mathrm{Proj}}_{Y^\suf} \mathrm{Sym}^* (\omega_{Y^\suf/C[[\graph]]} \oplus \pi_{Y^\suf/C}^*\omega_C) \to Y^\suf. $$
		The sections of $P$ corresponding to the projections to the two factors of the direct sum are the $0$-section and $\infty$-section respectively.
		
		Then, there exists a (unique) section $\sigma: Y^\suf \to P$ of $P$ such that $\sigma(\zeta)$ equals
		\begin{equation*}
		\begin{aligned} 
		\mu^*\phi(\zeta) &\in \spec \mathrm{Sym}^* (\mu^* (\omega_{Y^\fuf/C [[\graph]]}^\vee \otimes \pi_{Y^\fuf/C}^*\omega_C))_{\kappa(\zeta)} = \\
		& =  \spec \mathrm{Sym}^* (\omega_{Y^\suf/C [[\graph]]}^\vee \otimes \pi_{Y^\suf/C}^*\omega_C)_{\kappa(\zeta)} =P_{\kappa(\zeta)} \backslash \{ 0 \} \simeq {\mathbb A}^1_{\kappa(\zeta)}
		\end{aligned}
		\end{equation*}
		under the natural identifications, where $\zeta$ is the generic point of $Y^\suf$. 
		
		Moreover, for any $z \in C[[\graph]]_0$, $\sigma_z$ is equal to $\infty$ on all infinite scaling components of $Y^\suf_z$, to $0$ on all $0$ scaling components, and generically to neither $0$ or $\infty$ on all finite scaling components. 
	\end{proposition}
	
	\begin{proof}
		By standard facts (e.g. \cite[\href{https://stacks.math.columbia.edu/tag/0ATX}{Tag 0ATX}]{[SP]}), $ \mu^! \omega_{Y^\fuf/ C [[\graph]]} = \mu^! \pi^!_{Y^\fuf/ C [[\graph]]} {\sh O}_{C [[\graph]]} = \pi^!_{Y^\suf/ C [[\graph]]} {\sh O}_{C [[\graph]]} = \omega_{Y^\suf/ C [[\graph]]}$, 
		and then \cite[Proposition 3.10]{[Za21]} gives a homomorphism 
		$$ \mu^* \omega_{Y^\fuf/ C [[\graph]]} \to \mu^! \omega_{Y^\fuf/ C [[\graph]]} = \omega_{Y^\suf/ C [[\graph]]}. $$
		Let $d\mu$ be the dual of the map above, so $d\mu$ is essentially the relative logarithmic differential of $\mu$. Clearly, $d\mu$ is an isomorphism on the complement of the exceptional locus of $\mu$. On the other hand,
		$$ \pi^*_{Y^\suf/C} \omega_C^\vee \xrightarrow{ \mu^*(\phi^\vee) } \mu^* \omega^\vee_{Y^\fuf/C[[\graph]]} $$
		is an isomorphism on the preimage of $\mu$ of the open set on $Y^\fuf$ which consists of points which live (only on) terminal components of the respective fiber of $Y_0^\fuf \to C[[\graph]]_0$ or away from the central fiber. These two open subsets cover $Y^\suf$, so
				\begin{equation}\label{equation: surjective map giving section of P^1-bundle}
			 \pi_{Y^\suf/C}^*\omega^\vee_C \oplus \omega_{Y^\suf/C[[\graph]]}^\vee \xrightarrow{ 
				\left[ \begin{smallmatrix}
					\mu^*(\phi^\vee) \\
					d\mu
				\end{smallmatrix} \right] } \mu^* \omega^\vee_{Y^\fuf/C[[\graph]]}
		\end{equation}
		is surjective. Twisting \eqref{equation: surjective map giving section of P^1-bundle} by $\omega_{Y^\suf/C[[\graph]]} \otimes \pi_{Y^\suf/C}^*\omega_C$, we obtain a surjection
		$$ \omega_{Y^\suf/C[[\graph]]} \oplus \pi_{Y^\suf/C}^*\omega_C \to \omega_{Y^\suf/C[[\graph]]} \otimes \pi_{Y^\suf/C}^*\omega_C \otimes \mu^* \omega^\vee_{Y^\fuf/C[[\graph]]} \to 0. $$ 
		The section is then obtained from the well-known universal property, and the required properties are easy to check.
	\end{proof}
	
	\subsection{The wonderful models as subvarieties of $C[[\graph]]$}\label{subsection: wonderful models as subvarieties} In this subsection, we identify $W_n$ (respectively $W'_n$) with a codimension $2$ subvariety of $C[[\graph]]$ when $\graph$ has no edges (respectively is complete). The varieties $W_n$ and $W'_n$ were constructed in Definition \ref{definition: initial definition of Wn and Tn}, and the constructions were slightly rearranged in \S\ref{subsection: rearranged definition}. Then, we remove the components isomorphic to $C$ from the restriction to this codimension $2$ locus of the families of curves considered in \S\ref{subsection: stabilizing the curves}. In the next subsection, we will see that the resulting family of genus $0$ curves satisfies the properties in the description of the functor of points of $\overline{P}_n$ (respectively $\overline{Q}_n$). 
		
	Let $W_\graph$ be $W_n$ if $\graph$ has no edges, respectively $W'_n$ if $\graph$ is complete. Let ${\mathrm F}(\aline,\graph)$ as in \S\ref{section: polydiagonal degenerations}, namely, ${\mathbb A}^n$ if $\graph$ has no edges, respectively ${\mathrm F}(\aline, n)$ if $\graph$ is complete. Then $W_\graph$ is a compactification of ${\mathrm F}(\aline,\graph)/{\mathbb G}_a$, where ${\mathbb G}_a$ acts diagonally. 
	
	We have the following boundary divisors on $W_\graph$.
	\begin{enumerate}
		\item For each partition $\rho \in L_n^* = L_n \backslash \{\bot\}$, the divisor $D_\rho \subset W_\graph$ is the preimage of ${\mathbb P}(0 \oplus P/\cc)$, where $P$ is the polydiagonal of $\cc^n$ corresponding to $\rho$.
		\item If $\graph = K_n$, for each $S \subseteq [n]$, $|S| \geq 2$, the divisor $E_S \subset W'_n$ is the preimage of ${\mathbb P}(\cc \oplus D/\cc)$, where $D$ is the diagonal of $\cc^n$ corresponding to $S$. 
	\end{enumerate}
	The open stratum is denoted by $W_\graph^\circ \simeq {\mathrm F}(\aline,\graph)/{\mathbb G}_a$, while the codimension $1$ strata are denoted by $D_\rho^\circ$ and $E_S^\circ$, and naturally consist of all points in the respective boundary divisors which don't belong to any other boundary divisors. 
	
	Let us fix a closed point $p \in C$ and an isomorphism $T_pC \simeq \cc$.
	
	\begin{proposition}
	The variety $W_\graph$ is isomorphic to the fiber of $C[[\graph]] \to \aline \times C^n$ 
	over $(0,p,\ldots,p)$. Moreover, $D_\rho = W_\graph \cap D_\rho^v$ for any $\rho \in L_n^*$, and, if $\graph$ is complete, $E_S = W_\graph \cap D_S^h$ for any $|S| \geq 2$,
	and the intersections are transverse.
	\end{proposition}
	
	\begin{proof}
	The fiber of the blowup of $\aline \times C^n$ at $\{0\} \times \Delta_\bot$ over $(0,p,\ldots,p)$ is
	\begin{equation}\label{equation: fiber of exceptional at p} \Pi:={\mathbb P}(T_0\aline \oplus T_pC^{\oplus n}/T_pC) = {\mathbb P}(\cc \oplus \cc^n/\cc). \end{equation}
	For any $\rho \in L_n^*$ and for any $S \subseteq [n]$ with $|S| \geq 2$, $ \widetilde{\{0\} \times \Delta_\rho} \cap \Pi = {\mathbb P}(0 \oplus \Delta_{\rho,\cc^n}/\cc)$ and $\widetilde{\aline \times \Delta_S} \cap \Pi = {\mathbb P}(\cc \oplus \Delta_{S,\cc^n}/\cc)$
	transversally inside the blowup, where $\Delta_{\rho,\cc^n}$ and $\Delta_{S,\cc^n}$ are the respective polydiagonal and diagonal in $\cc^n$. Since ${\mathcal P}_\graph$ is a building set in $\aline \times C^n$ (\S\ref{subsection: polydiagonal degenerations}) and $\{0\} \times \Delta_\bot \in {\mathcal P}_\graph$ is minimal, it follows from \cite[Propositon 2.8]{[Li09]} that the proper transforms above (the latter included only if $\graph$ is complete) form a building set in the blowup, and, by \cite[Definition 2.12]{[Li09]}, the resulting wonderful compactification coincides with $C[[\graph]]$. Therefore, the claim follows from Proposition \ref{proposition: functoriality of wc}.
	\end{proof}
	
	If ${\mathcal C} \to S$ is a family of e.g. curves over a base $S$ and $T \to S$ is a morphism, we write ${\mathcal C}_T$ for the pullback $T \times_S {\mathcal C}$ to $T$.
	
	\begin{lemma}\label{lemma: deleting C}
	Let $\hsymb \in \{\fuf,\suf\}$. Then, $C [[\graph]]^\hsymb_{W_\graph}$ has an irreducible component isomorphic to $W_\graph \times C$, and $Y^\hsymb_{W_\graph}$ has an irreducible component isomorphic to $W_\graph \times C$ as well. Moreover, if $Z^\hsymb$ is the union of all the irreducible components of $Y^\hsymb_{W_\graph}$ other than the one isomorphic to $W_\graph \times C$, then $(W_\graph \times C) \cap Z^\hsymb$ is a (singular) section of $C [[\graph]]^\hsymb_{W_\graph}$ over $W_\graph$, which is smooth as a section of $Z^\hsymb$ over $W_\graph$.
	\end{lemma}
	
	(In fact, $Z^\hsymb$ is irreducible, but it is not important for now.)
	
	\begin{proof}
	Recall (from the proof of Proposition \ref{proposition: commutative diagram} or the second part of the proof of Theorem \ref{theorem: polydiagonal degeneration}) that $C [[\graph]]^\hsymb$ was obtained from $C [[\graph]] \times C$ by a sequence of blowups indexed by ${\mathfrak l}^\star$. After the first blowup in this sequence, the restriction of the family of curves to $D^v_\bot$ has a component isomorphic $D^v_\bot \times C$ and one isomorphic to a ${\mathbb P}^1$-bundle over $D^v_\bot$. However, all subsequent blowups are disjoint from the component isomorphic to $D^v_\bot \times C$, since we're always blowing up loci inside the union of the marked sections by item \ref{item: loci to be blown up are sections in lemma: big induction in round 2 of blowing up} in Claim \ref{claim: big induction in round 2 of blowing up}, and these sections have already been moved away from $C$ above $D^v_\bot$ at the first blowup. In particular, $C [[\graph]]^\hsymb_{W_\graph}$ has an irreducible component isomorphic to $W_\graph \times C$. This component intersects the union of the other components at $W_\graph \times \{p\}$. 
	
	For $Y^\hsymb_{W_\graph}$, it is easy to check that the composition
	\begin{equation}\label{equation: to be immersion of component}
		W_\graph \times C \hookrightarrow  C [[\graph]]^\hsymb_{W_\graph} \xrightarrow{ f^\hsymb|_{C [[\graph]]^\hsymb_{W_\graph}} } Y^\hsymb_{W_\graph}, 
	\end{equation} 
	is the immersion of an irreducible component. For instance, \eqref{equation: to be immersion of component} is obviously `fiberwise' unramified by Proposition \ref{proposition: stabilization of universal curve} (it is fiberwise a closed immersion), hence unramified \cite[\href{https://stacks.math.columbia.edu/tag/0475}{Tag 0475}]{[SP]}, hence a closed immersion \cite[\href{https://stacks.math.columbia.edu/tag/04XV}{Tag 04XV}]{[SP]}, and it must be an irreducible component for dimension reasons.
	\end{proof}
	
	\begin{remark}\label{remark: generalization to any dimension}
		Much of the above could have been carried out for general $X$. Then, we would have obtained generalizations of $W_n$ and $W'_n$, which compactify spaces of configurations of $n$ points (distinct, for $W'_n$) in ${\mathbb A}^d$ (rather than ${\mathbb A}^1$) modulo translation, and even universal families over these compactifications. Thus, these compactifications generalizing $W_n$ and $W'_n$ are obtained from our polydiagonal degenerations in a manner quite similar to that in which the Chen--Gibney--Krashen space $T_{d,n}$ \cite{[CGK09]} is obtained from $X[n]$. On the other hand, generalizing $\overline{P}_n$ and $\overline{Q}_n$ is an extremely subtle issue, which we hope to address in future work.
	\end{remark}
	
	\subsubsection*{Summary} Let us summarize what we have constructed and proved so far. Continuing to use the notation in Lemma \ref{lemma: deleting C}, $f^\hsymb x_i^\hsymb (W_\graph) \subset Z^\hsymb$ holds, so let 
	$$ z_i^\hsymb = (f^\hsymb x_i^\hsymb)|_{W_\graph}: W_\graph \to Z^\hsymb. $$
	Let also $z_\infty^\hsymb: W_\graph \to Z^\hsymb$ be the section whose image is $z^\hsymb_\infty(W_\graph) = (W_\graph \times C) \cap Z^\hsymb$. Note that $Z^\hsymb \to W_\graph$ is a prestable curve of genus $0$. Indeed, flatness away from the section $z_\infty^\hsymb$ follows from flatness of $Y^\hsymb \to C[[\graph]]$, and flatness along the section $z_\infty^\hsymb$ follows from smoothness. The sections $z_1^\hsymb,\ldots,z_n^\hsymb,z_\infty^\hsymb$ are smooth, $z_i^\hsymb(s) \neq z_\infty^\hsymb(s)$ for all $s \in W_\graph$, and, if $\graph$ is complete, then $z_i^\suf(s) \neq z^\suf_j(s)$, for all $s \in W_\graph$ and $i \neq j$, which all follow from Theorem \ref{theorem: polydiagonal degeneration}, Proposition \ref{proposition: stabilization of universal curve}, and Lemma \ref{lemma: deleting C}.
	
	By \ref{item: combinatorics item in polydiagonal theorem} in Theorem \ref{theorem: polydiagonal degeneration}, Proposition \ref{proposition: stabilization of universal curve}, and Lemma \ref{lemma: deleting C}, we can determine the dual tree of $Z^\hsymb_s$, for any $s \in W_\graph(\cc)$. Indeed, it is obtained from the corresponding dual tree of $C[[\graph]]^\star$ over $s \in C[[\graph]]$ by `stabilizing' it (passing from $T({\mathcal H})$ to $T^s({\mathcal H})$, or from $T({\mathcal H},{\mathcal F})$ to $T^s({\mathcal H},{\mathcal F})$, depending on the situation) -- the combinatorial effect of Proposition \ref{proposition: stabilization of universal curve}, and removing the root -- the combinatorial effect of Lemma \ref{lemma: deleting C}. In particular, skipping ahead, combinatorial conditions such as the stability conditions for $\overline{P}_n$ (\cite[Theorem 1.3]{[Za21]}) and $\overline{Q}_n$ (\cite[Definition 2.4]{[GSW17]}), or condition (a) in \cite[Definition 2.4]{[GSW17]} will be trivial to check, so handled implicitly.
	
	The restriction $\phi|_{Z^\fuf}$ of $\phi$ (cf. Proposition \ref{proposition: descending scaling}) to $Z^\fuf$ is a priori a global section of
	$ \omega_{Y^\fuf/C[[\graph]]}^\vee |_{Z^\fuf} \simeq \omega_{Z^\fuf/W_\graph}^\vee(-z^\fuf_\infty) $ (the isomorphism follows from classical facts about dualizing sheaves, e.g. \cite[Chapter X, \S2]{[ACG10]}), but it must vanish along the section one more time since $\phi|_{W_\graph \times C} = 0$ (by Theorem \ref{theorem: polydiagonal degeneration} and Propositions \ref{proposition: descending scaling} and \ref{proposition: stabilization of universal curve}), so, 
	\begin{equation}\label{equation: double vanishing} \phi|_{Z^\fuf} \in H^0(Z^\fuf, \omega_{Z^\fuf/W_\graph}^\vee(-2z^\fuf_\infty)). \end{equation}
	By Theorem \ref{theorem: polydiagonal degeneration}, Proposition \ref{proposition: stabilization of universal curve}, and Proposition \ref{proposition: descending scaling} again, 
	\begin{equation}\label{equation: non-vanishing} \phi|_{Z^\fuf} ( z_i^\fuf(s) ) \neq 0, \end{equation}
	for all $s \in W_\graph$, and $i=1,\ldots,n$. If $\graph$ is complete and $s \in W_\graph(\cc)$, we can determine from Proposition \ref{proposition: descending scaling 2}, whether the restriction of the scaling $\sigma|_{Z^\suf_s}$ on a given component of $Z^\suf_s$ is $0$, generically nonzero and finite, or infinite. In particular, $\sigma|_{Z^\suf_s}$ is infinite at $z_\infty^+(s)$ and finite (possibly $0$) at $z_i^\suf(s)$, for any $s \in W_\graph$. 

	\subsection{The small resolutions of $\overline{P}_n$ and $\overline{Q}_n$} The objects summarized above are `points' of $\overline{P}_n$ (respectively $\overline{Q}_n$) if $\graph$ has no edges (respectively, if $\graph$ is complete, and $\hsymb = \suf$). Thus, we obtain the morphisms in Theorems \ref{theorem: small resolution theorem} and \ref{theorem: small resolution theorem 2}, and all that remains is to check that they are indeed small resolutions.
	
	Recall that $P_n \subset \overline{P}_n$ and $Q_n \subset \overline{Q}_n$ are the open subsets where the parametrized curves are smooth.
	
	\subsubsection*{The case of $\overline{P}_n$} 
	Assume that $\graph$ has no edges. Let $Z=Z^\fuf=Z^\suf$, $z_i = z_i^\fuf=z_i^\suf$, and $z_\infty = z_\infty^\fuf=z_\infty^\suf$. Recall that $W_\graph = W_n$ and $W_n^\circ = W_\graph^\circ \simeq {\mathbb A}^n/{\mathbb G}_a$. With the summary at the end of \S\ref{subsection: wonderful models as subvarieties} in mind, $ (Z,z_1,\ldots,z_n,z_\infty,\phi|_Z) $ induces a morphism
	\begin{equation} \gamma:W_n \to \overline{P}_n \end{equation}
	by \cite[Theorem 1.3]{[Za21]}. Indeed, the first three items in loc. cit. are stated explicitly in our summary, condition (1) is \eqref{equation: double vanishing}, condition (2) is \eqref{equation: non-vanishing}, and the stability condition (3) is clear (for $\cc$-points, hence for all geometric points).
	
	\begin{remark}\label{remark: relation between W curve and P curve}
	Let $s \in W_n \subset C[[K_n^c]]$ be a $\cc$-point, and let
	$$ {\mathcal H} = \{\rho \in L_n: s \in D^v_\rho\} = \{\bot\} \cup \{\rho \in L^*_n: s \in D_\rho\}. $$
	Let us review the combinatorics of the various curves over $s$.
	\begin{itemize}
		\item The dual tree of $C[[K_n^c]]_s^\fuf$ is $T({\mathcal H})$ (Definition \ref{definition: leveled tree representations}) by Theorem \ref{theorem: polydiagonal degeneration}.
		\item If we remove the component isomorphic to $C$ from $C[[K_n^c]]_s^\fuf$, the dual tree is $T({\mathcal H})-\star$. This is still connected and rooted, since $\bot \in {\mathcal H}$, see the discussion in \S\ref{section: combinatorial language} after Definition \ref{definition: leveled tree representations}.
		\item The dual tree of $Z_s$ is $T^s({\mathcal H}) - \star$ by Proposition \ref{proposition: stabilization of universal curve} and the constructions above. By definition, $Z_s$ is also the curve corresponding to $\gamma(s) \in \overline{P}_n$.
	\end{itemize}
	\end{remark}
	
	\begin{proposition}\label{proposition: birational}
		The restriction of $\gamma$ to $W_n^\circ$ is an isomorphism, and $\gamma^{-1}(P_n) = W_n^\circ$. In particular, $\gamma$ is birational.
	\end{proposition}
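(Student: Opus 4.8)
The plan is to treat the two assertions separately. For the equality $\gamma^{-1}(P_n) = W_n^\circ$, recall from Remark \ref{remark: relation between W curve and P curve} that the dual tree of the curve classified by $\gamma(s)$ is $T'(H) - \star$, where $H = \{\rho \in L_{[n]} : s \in D_\rho\}$; since $W_n \subseteq D_\bot$ we always have $\bot \in H$, so $H$ is a chain with minimum $\bot$. If $s \in W_n^\circ$, then $H = \{\bot\}$ and $T'(\{\bot\}) - \star$ is the single vertex $[n]$ carrying the legs $1,\ldots,n$, so $Z_s \cong {\mathbb P}^1$ and $\gamma(s) \in P_n$. If $s \notin W_n^\circ$, then $|H| \geq 2$; the unique level-$1$ vertex of $T(H)$ is the block $[n]$, and it has degree at least $3$ there---one edge to $\star$ and at least two to $\blocks(\rho_2)$, since $\rho_2 > \bot$ has at least two blocks---hence it survives in $T'(H)$ with at least two neighbours other than $\star$, so $T'(H) - \star$ has at least three vertices, $Z_s$ is reducible, and $\gamma(s) \notin P_n$. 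This gives $\gamma^{-1}(P_n) = W_n^\circ$, and in particular $\gamma(W_n^\circ) \subseteq P_n$.

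For the isomorphism, I would match both sides with ${\mathbb A}^n/{\mathbb G}_a$. On the source side, $W_n^\circ \cong \mathrm{F}(\aline, K_n^c)/{\mathbb G}_a = {\mathbb A}^n/{\mathbb G}_a$, as recorded above. On the target side, the moduli description of $\overline{P}_n$ shows that an irreducible stable $n$-marked ${\mathbb G}_a$-rational tree is a tuple $({\mathbb P}^1, z_1,\ldots,z_n,z_\infty,\phi)$ with $\phi$ a nonzero section of $\omega^\vee(-2z_\infty) \cong {\sh O}_{{\mathbb P}^1}$, and normalizing $z_\infty = \infty$ and $\phi = \partial_z$ leaves only the translations of $(z_1,\ldots,z_n) \in {\mathbb A}^n$, so $P_n \cong {\mathbb A}^n/{\mathbb G}_a$ carrying the tautological configuration family. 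It then suffices to check that the family $(Z, z_1,\ldots,z_n,z_\infty,\phi|_Z)$ over $W_n^\circ$ is, under these identifications, the tautological one: over $W_n^\circ$ all blowups past the first and the stabilization $f^\fuf$ are isomorphisms, while the first blowup of $\aline \times C^n$ along $\{0\} \times \Delta_\bot$ already exhibits $Z_s \cong {\mathbb P}^1$ with the $z_i$ recording the original positions $(x_1,\ldots,x_n)$ modulo translation and $\phi|_Z$ the (finite, nonzero) translation-invariant vector field, by Theorem \ref{theorem: polydiagonal degeneration} and Claim \ref{claim: family of curves as in Za21}. Hence $\gamma|_{W_n^\circ}$ classifies the tautological family, so it is the identity of ${\mathbb A}^n/{\mathbb G}_a$, hence an isomorphism onto $P_n$. (Alternatively, one can build the inverse directly: a family of irreducible stable trees over $T$ gives a map $T \to {\mathbb A}^n/{\mathbb G}_a$ by trivializing the relative $\aline = Z \setminus z_\infty$ via $\phi$ and reading off the positions of the $z_i$.)

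Finally, $W_n^\circ$ is dense in $W_n$, being the open stratum of a De Concini--Procesi wonderful model, and $P_n$ is dense in the irreducible variety $\overline{P}_n$, so $\gamma$ is birational. The step requiring the most care is the identification of the family over $W_n^\circ$ with the tautological configuration family of $P_n$: one must confirm that on the open stratum the construction of \S\ref{section: polydiagonal degenerations}, together with the stabilization, produces exactly ${\mathbb P}^1$ with the $n$ markings in their original positions and the standard scaling. Everything else in the argument is formal.
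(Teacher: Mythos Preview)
Your proof is correct and follows essentially the same outline as the paper's: the combinatorial argument for $\gamma^{-1}(P_n)=W_n^\circ$ via the dual tree $T'(H)-\star$ is exactly what the paper calls the ``trivial combinatorial argument,'' only you spell it out.

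The one genuine difference is in how the isomorphism $W_n^\circ \xrightarrow{\sim} P_n$ is established. You identify both sides with ${\mathbb A}^n/{\mathbb G}_a$ a priori and then argue that $\gamma|_{W_n^\circ}$ classifies the tautological family, hence is the identity. The paper instead first observes that $\gamma|_{W_n^\circ}:W_n^\circ\to P_n$ is proper (being a base change of $\gamma$) with normal target, so by Zariski's Main Theorem it suffices to check set-theoretic injectivity; it then carries out an explicit \'etale-local coordinate computation (two blowups, coordinates $t,u_1,\delta_2,\dots,\delta_n$) to read off the marked points $0,\delta_2,\dots,\delta_n,\infty$ and verify that the scaling is independent of the $\delta_i$. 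Your route is conceptually cleaner and avoids invoking normality of $P_n$, but the paper's route has the advantage that the crucial identification of the family over $W_n^\circ$---which you correctly flag as the step requiring the most care---is made completely concrete in coordinates rather than asserted. In practice both arguments rest on the same local computation; the paper just writes it down.
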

	
	\begin{proof}
		The fact that $\gamma^{-1}(P_n) = W_n^\circ$ follows from Remark \ref{remark: relation between W curve and P curve} and a simple combinatorial argument. It remains to check that $\gamma$ restricts on $W_n^\circ$ to an isomorphism $W_n^\circ \simeq P_n$. Since $\gamma^{-1}(P_n) = W_n^\circ$ and $P_n$ is smooth (even normal suffices, which follows from \cite[Proposition 6.7]{[Za21]}), it suffices to check that $\gamma$ is injective on $W_n^\circ$. This can be done by an explicit calculation, which is trivial after unwinding the constructions. For $s \in W^\circ_n(\cc)$, we have $C[[K_n^c]]_s^\fuf  = C \cup_p {\mathbb P}^1 = C \cup_p Y_s^\fuf$, so $Y_s^\fuf = {\mathbb P}^1$. With \eqref{equation: fiber of exceptional at p} in mind, $W^\circ_n$ is identified with the open $\cc^n/\cc \subset \Pi$. Assuming that $s = (x_1,\ldots,x_n) + \cc(1,\ldots,1)$, it can be checked by unwinding the definitions that $\gamma(s) = ({\mathbb P}^1,x_1,\ldots,x_n,[1:0],\partial/\partial x) \in P_n(\cc)$ cf. \S\ref{subsection: review of Q and P}, so $\gamma$ is injective on $W_n^\circ$. 
	\end{proof}
	
	It remains to check that $\gamma$ is small, but not an isomorphism if $n \geq 4$.
	
	\begin{proposition}\label{proposition: isomorphism in codimension 2}
		The birational morphism $\gamma$ is small.
	\end{proposition}
	
	\begin{proof}
		Let $R \subset \overline{P}_n$ be the closed set where the fibers of $\gamma$ are positive dimensional and $E = \gamma^{-1}(R)$. The restriction $W_n \backslash E \to \overline{P}_n \backslash R$ of $\gamma$ is a finite (since proper and quasi-finite) birational morphism with normal target (since $\overline{P}_n$ is normal by \cite[Proposition 6.7]{[Za21]}), hence an isomorphism. Thus, if we show that $E$ has codimension at least $2$ in $W_n$, then the proposition follows.
		
		By Proposition \ref{proposition: birational}, $W_n^\circ \cap E = \emptyset$, so $E \subseteq \partial W_n = W_n \backslash W_n^\circ$.
		
		By standard facts about stratifications of moduli stacks of prestable curves, for each $\rho \in L_n^*$ there exists a locally closed subset $P_n[\rho] \subset \overline{P}_n$ which parametrizes curves whose dual tree is a star with $|\blocks(\rho)|$ leaves (that is, $K_{1,|\blocks(\rho)|}$), such that the $\infty$ marking is on the central component, and the tail component corresponding to the block $B \in \blocks(\rho)$ contains the marked points indexed by $B$. We claim that
		\begin{equation}\label{equation: stratum dimension on P side}
			\dim P_n[\rho] = n-2.
		\end{equation}
		Heuristically, there are $|\blocks(\rho)| - 2$ moduli on the backbone ($|\blocks(\rho)|$ nodes, the $\infty$ marking, and $0$ vector field), and $|B|-1$ moduli on the tail corresponding to the block $B$ (one node, $|B|$ marked points, and nonzero vector field), which adds up to $|\blocks(\rho)| - 2 + \sum_{B \in \blocks(\rho)} (|B|-1) = n-2$. 
		More formally, it is obvious that $\dim P_n[\rho] < \dim \overline{P}_n = n-1$. On the other hand, $\dim P_n[\rho] \geq n-2$ can be justified by exhibiting a family of $n$-marked ${\mathbb G}_a$-rational trees over an $(n-2)$-dimensional base, such that the map to $\overline{P}_n$ it induces by \cite[Theorem 1.3]{[Za21]} is quasi-finite and has its image contained in $P_n[\rho] $. For instance, it is easy to construct such a family as divisors in ${\mathbb P}^1 \times {\mathbb P}^1$ consisting of one horizontal line and $|\blocks (\rho)|$ vertical lines in the rulings.
		
		Recall that $D^\circ_\rho$ is irreducible of dimension $n-2$ for any $\rho \in L_n^*$. We have
		\begin{equation}\label{equation: covers codim up to 2}
			\dim \partial W_n \backslash \bigcup_{\rho \in L_n^*} D_\rho^\circ = \dim \bigcup_{\rho_1 \neq \rho_2} (D_{\rho_1} \cap D_{\rho_2}) = n-3, 
		\end{equation}
		provided that $n \geq 3$. Theorem \ref{theorem: polydiagonal degeneration}, Remark \ref{remark: relation between W curve and P curve}, and Lemma \ref{lemma: spider has short legs} imply that
		\begin{equation}\label{equation: compatibility of W and P strata}
			\gamma^{-1}(P_n[\rho]) = D_\rho^\circ
		\end{equation}
		for any $\rho \in L_n^*$. In particular, $\gamma(D_\rho^\circ) = P_n[\rho]$ since $\gamma$ is surjective, so, by \eqref{equation: stratum dimension on P side} and the irreducibility of $D^\circ_\rho$, $P_n[\rho]$ is irreducible of dimension $n-2$. 
		
		Assume by way of contradiction that $E$ was of codimension $1$ in $W_n$. By $E \subseteq \partial W_n$ and \eqref{equation: covers codim up to 2}, there exists $\rho \in L_n^*$ such that $D_\rho^\circ \subseteq E$. Then $\dim P_n[\rho] < \dim D_\rho^\circ = n-2$ by \eqref{equation: compatibility of W and P strata} (keeping in mind that $P_n[\rho]$ is irreducible), which contradicts \eqref{equation: stratum dimension on P side}. 
	\end{proof}
	
	\begin{proposition}\label{proposition: not an isomorphism in general}
		The morphism $\gamma$ is not an isomorphism if $n \geq 4$.
	\end{proposition}
	
	\begin{proof}
		We will use the notation from Lemma \ref{lemma: binary tree}. By Lemma \ref{lemma: binary tree}, there exists a unique $b \in \overline{P}_n(\cc)$ such that the curve over $b$ has dual tree $T^s({\mathcal H}) - \star$. Note that
		$$ Y: = \bigcap_{i=0}^{k-1} D_{\rho_i} \neq \emptyset \quad \text{and} \quad \dim Y = \dim W_n - k = n - 1 - \lceil \log_2 n \rceil, $$
		by \cite[Theorem 1.2]{[Li09]}. Hence, $\dim Y > 0$ for $n \geq 4$. However, $\gamma(Y) = \{b\}$ by the second part of Lemma \ref{lemma: binary tree} and Remark \ref{remark: relation between W curve and P curve}, hence $\gamma$ is not an isomorphism. 
	\end{proof}
	
	\begin{remark}\label{remark: not IH small proof}
		The argument in the proof of Proposition \ref{proposition: not an isomorphism in general} also proves Remark \ref{remark: not IH small}. If $r = n - 1 - \lceil \log_2 n \rceil$, then $\dim \gamma^{-1}(b) \geq r$, so  $ \mathrm{codim} \{p \in \overline{P}_n: \dim \gamma^{-1}(p) \geq r \} \leq n-1 \leq 2r$ 
		for $n \geq 7$. The same argument will prove the analogous statement about $\overline{Q}_n$. 
	\end{remark}
	
	Propositions \ref{proposition: birational}, \ref{proposition: isomorphism in codimension 2}, and \ref{proposition: not an isomorphism in general} complete the proof of Theorem \ref{theorem: small resolution theorem}. Any small birational morphism of smooth projective varieties is an isomorphism \cite[Corollary 2.63]{[KM98]}, which confirms that $\overline{P}_n$ is singular for all $n \geq 4$.
	
	\subsubsection*{The case of $\overline{Q}_n$} The arguments for $\overline{Q}_n$ are in essence analogous, so we will focus on the new ingredients and the few minor differences. 
	
	Assume that $\graph$ is complete. Let $Z=Z^\suf$, $z_i = z_i^\suf$, and $z_\infty =z_\infty^\suf$. We have $W_\graph = W'_n$ and $(W'_n)^\circ=W_\graph^\circ \simeq \mathrm{F}(\aline,n)/{\mathbb G}_a$. With the summary at the end of \S\ref{subsection: wonderful models as subvarieties} in mind, $(Z,z_\infty,z_1,\ldots,z_n,\sigma|_Z)$ induces a morphism
	\begin{equation} \eta:W'_n \to \overline{Q}_n \end{equation}
	by \cite[Definition 2.4 and Theorem 2.5]{[GSW17]} or \cite[Example 4.2.(d)]{[Wo15]} (cf. \cite[Definition 10.2]{[MW10]} also). Indeed, all conditions in loc. cit. are satisfied by our summary. 
	
	\begin{remark}\label{remark: relation between T curve and Q curve}
	Let $s \in W'_n(\cc) \subset C[[K_n]](\cc)$, and 
	\begin{equation*}
	\begin{aligned}
	{\mathcal H} &= \{\rho \in L_n: s \in D^v_\rho\} = \{\bot\} \cup \{\rho \in L^*_n: s \in D_\rho\} \quad \text{and} \\
	{\mathcal F} &= \{ S \subseteq [n]: |S| \geq 2, s \in D^h_S \} =  \{ S \subseteq [n]: |S| \geq 2, s \in E_S \}.
	\end{aligned}
	\end{equation*}
	The combinatorics of the various curves over $s$ is the following.
	\begin{itemize}
		\item The dual tree of $C[[K_n]]_s^\suf$ is $T({\mathcal H},{\mathcal F})$ (Definition \ref{definition: leveled tree representations}) by Theorem \ref{theorem: polydiagonal degeneration}.
		\item If we remove the component isomorphic to $C$ from $C[[K_n]]_s^\suf$ (cf. Lemma \ref{lemma: deleting C}), the dual tree is $T({\mathcal H},{\mathcal F})-\star$. 
		\item The dual tree of $Z_s$ is $T^s({\mathcal H},{\mathcal F}) - \star$ by Proposition \ref{proposition: stabilization of universal curve} and the considerations above. Note that $Z_s$ is also the  curve corresponding to $\eta(s) \in \overline{Q}_n$.
	\end{itemize}
	\end{remark}

	\begin{lemma}\label{lemma: Q normal}
		The variety $\overline{Q}_n$ is normal.
	\end{lemma}
	
	\begin{proof}
		In \cite[Example 4.2.(d)]{[Wo15]}, it is stated and sketched that $\overline{Q}_{n+1}$ is the universal curve over $\overline{Q}_n$. Then we can easily argue inductively that $\overline{Q}_n$ is lci (prestable curves are lci over their base and compositions of lci maps are lci under very mild hypotheses) and regular in codimension $1$, hence normal since Serre's $S_2$ condition is implied by lci (in fact, even by Cohen--Macaulay \cite[\href{https://stacks.math.columbia.edu/tag/0342}{Tag 0342}]{[SP]}). Alternatively, $\overline{Q}_n$ has toric singularities \cite[Corollary 10.6]{[MW10]}, which are Cohen--Macaulay \cite{[Ka94]}.
	\end{proof}
	
	\begin{remark}\label{remark: Q normal}
		We may use an alternate weaker form of Lemma \ref{lemma: Q normal} if we wish to avoid the harder facts about $\overline{Q}_n$ invoked above. In the proofs of Propositions \ref{proposition: birational for q} and \ref{proposition: isomorphism in codimension 2 for q} below, we will only require normality for the open subset of $\overline{Q}_n$ which consists of the codimension $0$ and $1$ strata. However, this open is in fact smooth, which follows by direct calculations from the construction in \cite[\S10]{[MW10]}.
	\end{remark}
	
	\begin{proposition}\label{proposition: birational for q}
		The restriction of $\eta$ to $(W'_n)^\circ$ is an isomorphism, and $ \eta^{-1}(Q_n) = (W'_n)^\circ$. In particular, $\eta$ is birational.
	\end{proposition}
	
	\begin{proof}
	Similarly to the proof Proposition \ref{proposition: birational}, the only step that requires a longer verification is that $\eta$ is injective on $(W'_n)^\circ$. This is not just similar to the injectivity of $\gamma$ on $W_n^\circ$, but actually follows from it, since $(W'_n)^\circ$ can be naturally identified with an open subset of $W_n^\circ$, in a manner which also identifies the families of curves over them, their marked points, and scalings, keeping Proposition \ref{proposition: descending scaling 2} in mind.
	\end{proof}

	\begin{proposition}\label{proposition: isomorphism in codimension 2 for q}
		The birational morphism $\eta$ is small.
	\end{proposition}
	
	\begin{proof}
	This is similar to the proof of Proposition \ref{proposition: isomorphism in codimension 2}, with the minor difference that now we have two types of boundary divisors. As in that proof, let $R \subset W'_n$ be the set of points whose $\eta$-fiber has positive dimension, and $E = \eta^{-1}(R)$. Then $E \subseteq \partial W'_n = W'_n \backslash (W'_n)^\circ$ by Proposition \ref{proposition: birational for q}. 
	
	Consider the following strata of $\overline{Q}_n$. For each $\rho \in L_n^*$, let $Q_n[\rho] \subset \overline{Q}_n$ be the locally closed subset which parametrizes curves which fit the same description as the curves in $P_n[\rho]$ from the proof of Proposition \ref{proposition: isomorphism in codimension 2} (in fact, it can be shown that $Q_n[\rho]$ is an open subset of $P_n[\rho]$, since the marked points are now distinct). For each subset $S \subseteq [n]$, $|S| \geq 2$, we also have a locally closed subset $Q_n[S] \subset \overline{Q}_n$ which parametrizes curves with two components, one of which contains all marked points indexed by $S$ and no others (including $\infty$). We have
		\begin{equation}\label{equation: stratum dimension on Q side}
			\dim Q_n[\rho] = n-2 \quad \text{and} \quad \dim Q_n[S] = n-2.
		\end{equation}
	The former is completely analogous to \eqref{equation: stratum dimension on P side}, while the latter can be proved by similar techniques.
	
	Assume that $E$ had codimension $1$ in $W'_n$. Then, arguments similar to those in the proof of Proposition \ref{proposition: isomorphism in codimension 2} show that $E$ contains either $D_\rho$ for some $\rho \in L_n^*$, or $E_S$ for some $|S| \geq 2$, and also rule out the first alternative. To rule out the second possibility, note that Remark \ref{remark: relation between T curve and Q curve} and Theorem \ref{theorem: polydiagonal degeneration} imply that
	$$ \eta^{-1}(Q_n[S]) = E_S^\circ. $$ 
	Once again, the fact that $E_S^\circ$ and $Q_n[S]$ have the same dimension is at odds with the assumption that $\eta$ maps the former to the latter with positive dimensional fibers. In conclusion, $E$ has codimension at least $2$ in $W'_n$. We conclude the proof using normality, cf. Lemma \ref{lemma: Q normal} and Remark \ref{remark: Q normal}.
	\end{proof}
	
	\begin{proposition}\label{proposition: not an isomorphism in general for q}
		The morphism $\eta$ is not an isomorphism if $n \geq 4$.
	\end{proposition}
	
	\begin{proof}
		The marked points on the `binary curve' (corresponding to $b \in \overline{P}_n$) used in the proof of Proposition \ref{proposition: not an isomorphism in general} are pairwise distinct, so the curve is also in $\overline{Q}_n$ and the same argument applies.
	\end{proof}
	
	Theorem \ref{theorem: small resolution theorem 2} follows from Propositions \ref{proposition: birational for q}, \ref{proposition: isomorphism in codimension 2 for q} and \ref{proposition: not an isomorphism in general for q}.
	
	\begin{remark}\label{remark: final remark}
		Although the use of the auxiliary curve $C$ and polydiagonal degenerations was not strictly necessary, another advantage besides streamlining the constructions is that it allows us to apply similar constructions to the space  $\overline{M}_{n,1}(C)$ from \cite[Example 4.2.(e)]{[Wo15]} and \cite[Definition 2.1]{[GSW17]}. Indeed, using analogous methods, the interested reader can construct a natural morphism $C[[K_n]]_0 \to \overline{M}_{n,1}(C)$. 
	\end{remark}

%\begin{appendices}

\appendix
	
		\section{Contracting components of curves in families}\label{appendix: contracting components} Given a (reduced, projective, connected but reducible) curve $Y$ with only nodal singularities and an irreducible component $Z$ of $Y$, we say that $Z$ is a \emph{rational tail} of $Y$ if $Z \simeq {\mathbb P}^1$ and $Z$ contains precisely one node of $Y$; or that $Z$ is a \emph{separating rational bridge} of $Y$ if $Z \simeq {\mathbb P}^1$, $Z$ contains precisely two nodes of $Y$, and $Y \backslash Z$ is disconnected. Note that the union of some rational tails and separating rational bridges of $Y$ must be a disjoint union of rational chains (a chain ${\mathbb P}^1 \cup \cdots \cup {\mathbb P}^1$ with normal crossings) attached to the rest of $Y$ at either one or two points.
		
		\begin{situation}\label{situation: contracting bridges semi-general}
			Let $S$ be a reduced, separated, connected, finite type scheme over $\cc$, and $\pi:C \to S$ a prestable curve over $S$. Let ${\sh L}$ be a line bundle on $C$ such that for any $s \in S(\cc)$ and any irreducible component $\Sigma$ of $C_s$, $\deg {\sh L}_s > 0$ and $\deg {\sh L}_s|_\Sigma \geq 0$, with strict inequality if $\Sigma$ is neither a rational tail, nor a separating rational bridge of $C_s$.
		\end{situation}
		
		The main goal of this section is to prove the following.
		
		\begin{proposition}\label{proposition: contracting bridges semi-general}
			In Situation \ref{situation: contracting bridges semi-general}, there exist a prestable curve $\overline{C} \to S$ and a morphism $f:C \to \overline{C}$ over $S$ which, for every $s \in S(\cc)$, contracts all irreducible components of $C_s$ on which ${\sh L}_s$ has degree $0$ and nothing else.
		\end{proposition}

		The result is clearly well-known, but the author does not know a suitable reference, so we will use a very slight variation on the proof of \cite[Proposition 3.10]{[BM96]}, which, in turn, is related to \cite[\S1]{[Kn83]}. (The lemmas in \cite[\href{https://stacks.math.columbia.edu/tag/0E7B}{Tag 0E7B}]{[SP]} suggest some potential alternate approaches, but we will not take this route.)
		
		\begin{lemma}\label{lemma: Lemma C.1.3}
			Proposition \ref{proposition: contracting bridges semi-general} holds if $S = \spec \cc$.
		\end{lemma}
		
		\begin{proof}
			We can construct $\overline{C}$ by elementary methods. The details are omitted.
		\end{proof}
		
		\begin{situation}\label{situation: contracting bridges semi-general + odd assumption}
			In Situation \ref{situation: contracting bridges semi-general}, assume furthermore that, for any $s \in S(\cc)$, there exist $M \geq 3$ and a line bundle ${\sh G} \in \Pic(C_s)$ such that ${\sh L}_s = {\sh G}^{\otimes M}$, and 
			$$ \deg {\sh G}|_\Sigma \geq 5g + 2\#\text{(irreducible components of $C_s$)} + 4 $$
			for any irreducible component $\Sigma$ such that $\deg {\sh L}_s|_\Sigma > 0$, where $g= p_a(C_s)$.  
		\end{situation}
		
		Until explicitly stated otherwise, we assume we are in Situation \ref{situation: contracting bridges semi-general + odd assumption}. The assumption in Situation \ref{situation: contracting bridges semi-general + odd assumption} is indeed extremely arbitrary, there is no particular significance beyond `nonzero degrees are large'. 
		
		Recall that a coherent ${\sh O}_X$-module ${\sh F}$ is \emph{normally generated} if the natural map $\Gamma(X,{\sh F})^{\otimes k} \to \Gamma(X,{\sh F}^{\otimes k})$ is surjective, for all $k \geq 1$, e.g. \cite[Definition 1.7]{[Kn83]}.
		
		\begin{proposition}\label{proposition: proposition C.1.5}
			Assume that $S = \spec \cc$ and ${\sh L}$ is ample. Then, ${\sh L}$ is generated by global sections, normally generated, and nonspecial (that is, $H^1(C,{\sh L}) = 0$). 
		\end{proposition}
		
		\begin{proof}
			We will deduce the claim artificially from \cite[Theorem 1.8]{[Kn83]}.
			
			First, we claim that the vanishing locus of a general global section of ${\sh G} \otimes \omega_C^\vee$ is $0$-dimensional, reduced, and disjoint from the singular locus of $C$. We sketch a proof of this uninteresting technicality. For $0$-dimensional and reduced, we may restrict attention to sections which vanish at all nodes, and then, after normalizing and restricting to connected components, the claim follows from the fact that a complete linear system of degree at least twice the genus on a smooth (connected, projective) curve is base-point free and its general member is reduced. For `disjoint from the singular locus', let $\nu:C' \to C$ be the partial normalization of $C$ at an arbitrary node with preimages $q_1,q_2 \in C'$. The claim amounts to $h^0(C',\nu^*({\sh G} \otimes \omega^\vee_C) \otimes {\sh O}_{C'} (-q_1-q_2)) = h^0(C,{\sh G} \otimes \omega_C^\vee) - 1$, which follows from the non-speciality of the two line bundles, by Serre duality and some routine manipulations of the bounds.
			
			Let $p_1+\cdots+p_n$ be the vanishing locus of a general global section of ${\sh G} \otimes \omega_C^\vee$. Then $p_1,\ldots,p_n \in C(\cc)$ are smooth and distinct, and ${\sh G} \otimes \omega_C^\vee \simeq {\sh O}_C(p_1 + \cdots + p_n)$, or equivalently, ${\sh G} \simeq \omega_C(p_1 + \cdots + p_n)$. Since ${\sh G}$ is ample, $(C,p_1,\ldots,p_n)$ is stable, and the claim follows from \cite[Theorem 1.8]{[Kn83]}.
		\end{proof}
		
		\begin{corollary}\label{corollary: corollary C.1.6}
			Assume that $S = \spec \cc$. Then, ${\sh L}^{\otimes n}$ is generated by global sections, normally generated, and nonspecial for all $n \geq 1$. 
		\end{corollary}
		
		\begin{proof}
			We may assume without loss of generality that $n=1$. Let $f:C \to \overline{C}$ as provided by Lemma \ref{lemma: Lemma C.1.3}.
			
			\begin{claim}\label{claim: claim C.1.7}
				The ${\sh O}_{\overline{C}}$-module ${\sh J}:=f_*{\sh L}$ is invertible and ample. Moreover, for any integer $k \geq 0$,  $f^*{\sh J}^{\otimes {k}} \simeq {\sh L}^{\otimes k}$, ${\sh J}^{\otimes {k}} \simeq f_*{\sh L}^{\otimes {k}}$, and $R^1f_*{\sh L}^{\otimes k} = 0$.
			\end{claim}
			
			\begin{proof}
				The proof is elementary and left to the reader.
			\end{proof}
			
			The corollary can be deduced from Proposition \ref{proposition: proposition C.1.5} and Claim \ref{claim: claim C.1.7}. Indeed, $\overline{C}$ and $f_*{\sh L}$ satisfy the assumptions of Proposition \ref{proposition: proposition C.1.5}.
		\end{proof}
		
		\begin{lemma}\label{lemma: lemma C.1.10}
			The following hold:
			\begin{enumerate}
				\item\label{item: conclusion 1 in lemma: lemma C.1.10} $\pi_*{\sh O}_C \simeq {\sh O}_S$, while $\pi_*{\sh L}^{\otimes k}$ is locally free, and its formation commutes with base change, for all $k \geq 0$;
				\item\label{item: conclusion 2 in lemma: lemma C.1.10} $\pi^*\pi_*{\sh L}^{\otimes k} \to {\sh L}^{\otimes k}$ is surjective for all $k \geq 0$; and
				\item\label{item: conclusion 3 in lemma: lemma C.1.10} $(\pi_*{\sh L})^{\otimes k} \to \pi_*({\sh L}^{\otimes k})$ is surjective for all $k \geq 0$.
			\end{enumerate}
		\end{lemma}
		
		\begin{proof}
			The fact that $\pi_*{\sh O}_C \simeq {\sh O}_S$ is well-known. If $k \geq 1$, then $H^1(C_s,{\sh L}_s^{\otimes k}) = 0$ by Corollary \ref{corollary: corollary C.1.6} when $s \in S(\cc)$, and hence for all $s \in S$ by semi-continuity, and item \ref{item: conclusion 1 in lemma: lemma C.1.10} follows from the `cohomology and base change' theorem. By item \ref{item: conclusion 1 in lemma: lemma C.1.10}, the formation of the homomorphisms in items \ref{item: conclusion 2 in lemma: lemma C.1.10} and \ref{item: conclusion 3 in lemma: lemma C.1.10} commutes with base change, so their surjectivity can be checked in the special case $S = \spec \cc$, when it follows from Corollary \ref{corollary: corollary C.1.6}.
		\end{proof}
		
		Let
		\begin{equation} \widehat{C} = \underline{\mathrm{Proj}}_S {\sh S}, \quad \text{where} \quad {\sh S} = \bigoplus_{k \geq 0} {\sh S}_k = \bigoplus_{k \geq 0} \pi_* ({\sh L}^{\otimes k}),  \end{equation}
		and $\widehat{\pi}:\widehat{C} \to S$ the natural projection. Let $\widehat{f}$ be the morphism associated to ${\sh L}$ and $\pi^*{\sh S} \to \bigoplus_{k \geq 0} {\sh L}^{\otimes k}$ (in the language of \cite[(3.7.1)]{[EGAII]}), that is, the composition
		\begin{equation}\label{equation: equation C.2}
			C = \underline{\mathrm{Proj}}_C \bigoplus_{k \geq 0} {\sh L}^{\otimes k} \supseteq U \xrightarrow{\tau} \underline{\mathrm{Proj}}_C \pi^*{\sh S} = C \times_S \widehat{C} \to \widehat{C},
		\end{equation}
		where $U \subseteq C$ is the suitable open, e.g. \cite[\href{https://stacks.math.columbia.edu/tag/07ZG}{Tag 07ZG}]{[SP]}.
		
		\begin{proposition}\label{proposition: proposition C.1.11}
			The following hold:
			\begin{enumerate}
				\item\label{item: conclusion 1 in proposition C.1.11} the formation of $\widehat{C}$ commutes with base change;
				\item\label{item: conclusion 2 in proposition C.1.11} the morphism $\widehat{\pi}$ is flat and projective;
				\item\label{item: conclusion 3 in proposition C.1.11} the morphism $\widehat{f}$ is defined everywhere (that is, $U = C$); and
				\item\label{item: conclusion 4 in proposition C.1.11} the formation of $\widehat{f}$ commutes with base change.
			\end{enumerate}
		\end{proposition}
		
		\begin{proof}
			All arguments needed can be found in the proofs of Claims 1--3 inside the proof \cite[Proposition 3.10]{[BM96]}. Item \ref{item: conclusion 1 in proposition C.1.11} follows from \ref{item: conclusion 1 in lemma: lemma C.1.10} in Lemma \ref{lemma: lemma C.1.10}. Flatness of $\widehat{\pi}$ follows from the fact in Lemma \ref{item: conclusion 1 in lemma: lemma C.1.10} that ${\sh S}_k$ is locally free (cf. Claim 2 in loc. cit.). Projectivity of $\widehat{\pi}$ follows from the fact that ${\sh S}$ is generated by ${\sh S}_1$ over ${\sh S}_0 = {\sh O}_S$ by Lemma \ref{lemma: lemma C.1.10} using \cite[Proposition 5.5.1]{[EGAII]} (cf. Claim 2 in \cite{[BM96]} again). Lemma \ref{lemma: lemma C.1.10} also gives that $\pi^*{\sh S}$ is generated by $\pi^*{\sh S}_1$ over $\pi^*{\sh S}_0 = {\sh O}_C$, so, it follows from \cite[\href{https://stacks.math.columbia.edu/tag/07ZK}{Tag 07ZK}]{[SP]} that $\widehat{f}$ is defined everywhere (in fact, $\tau$ is a closed immersion of $U=C$). The fact that the formation of $\widehat{f}$ commutes with base change is visible from \eqref{equation: equation C.2}. 
		\end{proof}
		
		\begin{lemma}\label{lemma: lemma C.1.11 new}
			Assume that $S = \spec {\cc}$. Let $f:C \to \overline{C}$ as provided by Lemma \ref{lemma: Lemma C.1.3}. Then, there exists an isomorphism $\psi:\overline{C} \to \widehat{C}$ such that $\widehat{f} = \psi f$.
		\end{lemma}
		
		\begin{proof}
			Recall that $\widehat{f}$ is defined everywhere, by Proposition \ref{proposition: proposition C.1.11}. We will use the notation $\Gamma_*(C,{\sh L}) = \bigoplus_{k \geq 0} \Gamma(C,{\sh L}^{\otimes k})$ and similar notation in analogous situations. By Claim \ref{claim: claim C.1.7}, $(f_*{\sh L})^{\otimes k} = f_*({\sh L}^{\otimes k})$, so 
			\begin{equation}\label{equation: equation C.3}
				\Gamma_*(\overline{C},f_*{\sh L}) \cong \Gamma_*(C,{\sh L}).
			\end{equation}
			By Claim \ref{claim: claim C.1.7}, $f_*{\sh L}$ is invertible and ample, so, by \cite[Proposition (4.6.3)]{[EGAII]},
			\begin{equation}\label{equation: equation C.4}
				\overline{C} \cong \Proj \Gamma_*(\overline{C},f_*{\sh L}).
			\end{equation}
			By \eqref{equation: equation C.3} and \eqref{equation: equation C.4}, we may define $\psi$ to be the composition
			\begin{equation}
				\overline{C} \cong \Proj \Gamma_*(\overline{C},f_*{\sh L}) \cong \Proj \Gamma_*(C,{\sh L}) = \widehat{C}. 
			\end{equation}
			Checking $\widehat{f} = \psi f$ requires some diagram chasing. Consider the diagram
			\begin{center}
				\begin{tikzpicture}
					\node (a) at (0.5,0) {$\overline{C}$};
					\node (b) at (3,0) {$\Proj \Gamma_*(\overline{C},f_*{\sh L})$};
					\node (c) at (7,0) {$\Proj \Gamma_*(C,{\sh L})$};
					\node (d) at (9.5,0) {$\widehat{C}$};
					
					\node (e) at (3,1) {$\Proj \mathrm{Sym}\mathop{} \Gamma(\overline{C},f_*{\sh L})$};
					\node (f) at (7,1) {$\Proj \mathrm{Sym}\mathop{} \Gamma(C,{\sh L})$};
					
					\node (g) at (5,2) {$C$};
					
					\draw[->] (g) to[out=180,in=90] node[above] {$f$} (a);
					\draw[->] (g) to[out=0,in=90] node[above] {$\widehat{f}$} (d);
					\draw[->] (a) to[out=-15,in=195] node[above] {$\psi$} (d);
					\draw[->] (a) to node[above] {\eqref{equation: equation C.4}} (b);
					\draw[double equal sign distance] (b) to node[above] {\eqref{equation: equation C.3}} (c);
					\draw[double equal sign distance] (c) to (d);
					\draw[right hook->] (b) to (e); \draw[right hook->] (c) to (f); 
					\draw[double equal sign distance] (e) to (f);
					\draw[->] (g) to (f);
					\draw[->] (a) to [out = 60, in =180] (e);
				\end{tikzpicture}
			\end{center}
			in which the maps $C \to \Proj \mathrm{Sym}\mathop{}\Gamma(C,{\sh L})$ and $\overline{C} \to \Proj \mathrm{Sym}\mathop{}\Gamma(\overline{C},f_*{\sh L})$ are the morphisms associated to the complete linear systems $|{\sh L}|$ and $|f_*{\sh L}|$. For instance, the former is $p \mapsto \Gamma(C,{\sh L}(-p))$. It is straightforward to check that all five bounded `faces' in the diagram are commutative. We will only mention that the commutativity of the upper left (concave, curved) square amounts to $\Gamma(C,{\sh L}(-p)) = \Gamma(\overline{C},(f_*{\sh L})(-f(p)))$ as subspaces of $\Gamma(C,{\sh L}) = \Gamma(\overline{C},f_*{\sh L})$. Let $p \in C(\cc)$. The commutativity of the faces implies that $\psi(f(p)),\widehat{f}(p) \in \widehat{C} = \Proj \Gamma_*(C,{\sh L})$ have the same image $\Gamma(C,{\sh L}(-p)) \in \Proj \mathrm{Sym} \mathop{} \Gamma(C,{\sh L})$, but since $\widehat{C} \to \Proj \mathrm{Sym} \mathop{} \Gamma (C,{\sh L})$ is a closed immersion by Corollary \ref{corollary: corollary C.1.6} (${\sh L}$ is normally generated) and \cite[\href{https://stacks.math.columbia.edu/tag/01N1}{Tag 01N1}]{[SP]}, it follows that $\psi(f(p)) = \widehat{f}(p)$, for any $p \in C(\cc)$. Everything is reduced, separated, of finite type over $\cc$, so we conclude $\widehat{f} = \psi f$.
		\end{proof}
		
		\begin{corollary}\label{corollary: corollary C.1.13}
			Proposition \ref{proposition: contracting bridges semi-general} holds (in Situation \ref{situation: contracting bridges semi-general + odd assumption}). 
		\end{corollary}
		
		\begin{proof}
			Define $\overline{C} = \widehat{C}$ and $f=\widehat{f}$. Everything follows from Proposition \ref{proposition: proposition C.1.11} and Lemmas \ref{lemma: lemma C.1.11 new} and \ref{lemma: Lemma C.1.3}.
		\end{proof}
		
		We exit Situation \ref{situation: contracting bridges semi-general + odd assumption} and enter Situation \ref{situation: contracting bridges semi-general}. To complete the proof of Proposition \ref{proposition: contracting bridges semi-general}, apply Corollary \ref{corollary: corollary C.1.13} with ${\sh L}$ replaced by ${\sh L}^{\otimes 3N}$ for $N \gg 0$. Indeed, the number of irreducible components of $C_s$ is bounded.
		
		We conclude with a further technical remark on Proposition \ref{proposition: contracting bridges semi-general}, for which it is convenient to use the following terminology.
		
		\begin{definition}[{\cite[Definition 3.1]{[Za21]}}]\label{definition: rational contraction}
			Let $S$ be a scheme, $\pi:X \to S$ and $\rho:Y \to S$ prestable curves, and 
			$ f:X \to Y$ an $S$-morphism. We say that $f$ \emph{has property R} if $\smash{ f^\#:{\sh O}_Y \to f_*{\sh O}_X }$ is an isomorphism and $R^1f_*{\sh O}_X = 0$, and these hold universally: $f_{S'}^\#:{\sh O}_{Y_{S'}} \to f_{S',*}{\sh O}_{X_{S'}} $ is an isomorphism and $R^1f_{S',*}{\sh O}_{X_{S'}} = 0$ for all $S' \to S$.
		\end{definition}
		
		\begin{lemma}\label{lemma: has property R in general}
			Let $S$ be a scheme of finite type over $\cc$, $\pi:X \to S$ and $\rho:Y \to S$ prestable curves over $S$, and $f:X \to Y$ an $S$-morphism. If $f_s^\#: {\sh O}_{Y_s} \to f_{s,*}{\sh O}_{X_s} $ is an isomorphism and $R^1f_{s,*}{\sh O}_{X_s} = 0$ for all $s \in S(\cc)$, then $f$ has property R.
		\end{lemma}
		
		\begin{proof}
			Follows from \cite[\href{https://stacks.math.columbia.edu/tag/0E88}{Tag 0E88}]{[SP]} and \cite[Lemma 3.3]{[Za21]}.
		\end{proof}

		\begin{remark}\label{remark: has property R in general}
			In particular, the morphism $f$ from Proposition \ref{proposition: contracting bridges semi-general} has property R, since it is elementary to check the property on fibers over $\cc$-points.
		\end{remark}
		
		We will also take this opportunity to prove Proposition \ref{proposition: morphism from Q to P}. 
		
	\begin{proof}[Proof of Proposition \ref{proposition: morphism from Q to P}]
	Let $\overline{Q}_n^+$ and $\overline{P}_n^+$ be the universal families of curves over $\overline{Q}_n$ and $\overline{P}_n$ respectively. Let $q \in \overline{Q}_n(\cc)$. We claim that there exists an elementary \'{e}tale neighbourhood $h:(U,u) \to (\overline{Q}_n,q)$ and a morphism of prestable curves $f:C \to C'$ over $U$ with property R (Definition \ref{definition: rational contraction}), where $C:=U \times_{\overline{Q}_n} \overline{Q}_n^+$, such that $f_s:C_s \to C'_s$ coincides with the morphism $\overline{Q}^+_{n,h(s)} \to \overline{P}^+_{n,\tau(h(s))} $ discussed in \S\ref{subsection: review of Q and P}, for each $s \in U(\cc)$. Let $y_1(q),\ldots,y_m(q) \in \overline{Q}_{n,q}^+$ be smooth points on components of (strictly) finite scaling, at least $2$ on each such component. Note that $(\overline{Q}_{n,q}^+,x_1(q),\ldots,x_n(q),x_\infty(q),y_1(q),\ldots,y_m(q)) \in \overline{M}_{0,m+n+1}(\cc)$. \'{E}tale locally, we may extend to smooth sections $y_1,\ldots,y_m: U \to C$, and since stability is an open condition in families, the data gives a $U$-point of $\overline{M}_{0,m+n+1}$ after shrinking $U$. Stabilizing by $U \to \overline{M}_{0,m+n+1} \to \overline{M}_{0,m+1}$ which forgets the $x_1,\ldots,x_n$ sections of $C$, we obtain a morphism of prestable curves $f:C \to C'$ over $U$. The desired property of $f_s$ is satisfied for $s=u$ by construction; we claim that, after further shrinking $U$ if needed, it will necessarily be satisfied for all $s \in U(\cc)$. Let $\Gamma_s$ be the dual tree of $C_s$ for $s \in U(\cc)$. After shrinking $U$, we have an edge contraction $\Gamma_u \to \Gamma_s$. Note that all vertices of $\Gamma_u$ map to vertices of $\Gamma_s$ of the same scaling or of (strictly) finite scaling, and then the claim follows purely combinatorially. 
	
	Next, we show that the scaling $\sigma:C \to {\mathbb P}(\omega_{C/U} \oplus {\sh O}_C)$ on $C$ descends to a vector field $v$ on $C'$. First, we claim that $C'$ is normal. Indeed, $\overline{Q}_n$ is lci (see the proof of Lemma \ref{lemma: Q normal}), hence $U$ is lci, hence $C'$ is lci since $C'$ is a prestable curve over $U$, and in particular $C'$ is $S_2$. On the other hand, $C'$ is nonsingular in codimension $1$ by a simple dimension count of potential singularities, thus $C'$ is indeed normal. Let $E \subset C$ be the exceptional locus of $f$, $V = C \setminus E$ its complement, $V' = f(V) \simeq V$, and $\sigma_V$ the restriction of $\sigma$ to $V$. Note that the restrictions $\omega_{C/U}|_V$ and $\omega_{C'/U}|_{V'}$ are canonically identified, and $\sigma_V$ is nowhere $0$ by construction, so $1/\sigma_V \in \Gamma(V',\omega_{C'/U})$. However, the complement $C' \setminus V'$ has at least codimension $2$ in $C'$ because its fiber over any point $s \in U$ is finite and its fiber over the generic point is empty, and $C'$ is normal, so $1/\sigma_V$ extends to a global section $v \in \Gamma(C',\omega_{C'/U})$ by what is colloquially known as the Algebraic Hartogs Lemma. The double vanishing of $v$ at $x_\infty$ and its non-vanishing at the sections $x'_1,\ldots,x'_n:U \to C'$ obtained by pushing forward the corresponding sections of $C \to U$ is immediately checked on fibers. By \cite[Theorem 1.3]{[Za21]}, we obtain a morphism $g_U:U \to \overline{P}_n$ and we conclude the proof by \'{e}tale descent, since the morphisms $g_U$ agree on \'{e}tale overlaps given that $g_U(s)$, $s \in U$, is uniquely determined by $h(s) \in \overline{Q}_n(\cc)$. 
	\end{proof}
	
	\section{Base change of wonderful compactifications} The formation of wonderful compactifications commutes with base change by a closed immersion transverse to the entire induced arrangement. The formal statement is the following.
	
	\begin{proposition}\label{proposition: functoriality of wc}
		Let $Y$ be a smooth variety, ${\mathcal G}$ a building set in $Y$ with induced arrangement ${\mathcal S}$, and $Y_{\mathcal G}$ the wonderful compactification of ${\mathcal G}$. Let $Y' \subset Y$ be a smooth closed subvariety whose intersection with any $S \in {\mathcal S}$ is transverse, ${\mathcal S}' = \{ Y' \cap S:S \in {\mathcal S} \}$, and ${\mathcal G}' = \{Y' \cap G: G \in {\mathcal G}\}$. 
		Assume also that all elements of ${\mathcal S}'$ are non-empty and connected.
		
		Then ${\mathcal S}'$ is an arrangement in $Y'$, ${\mathcal G}'$ is a building set for ${\mathcal S}'$, and
		\begin{equation}\label{equation: functoriality of wc}
			Y'_{{\mathcal G}'} = Y' \times_Y Y_{\mathcal G}.
		\end{equation} 
		Moreover, if ${\mathcal D}$ and ${\mathcal D}'$ are the collections of distinguished divisors on $Y_{\mathcal G}$ and $Y'_{{\mathcal G}'}$ respectively (i.e. the divisors in \cite[Theorem 1.2]{[Li09]}), then
		$ {\mathcal D}' = \{Y'_{{\mathcal G}'} \cap D: D \in {\mathcal D} \}$,
		where $Y'_{{\mathcal G}'} \hookrightarrow Y_{\mathcal G}$ by \eqref{equation: functoriality of wc}, and $Y'_{{\mathcal G}'}$ intersects $D$ transversally for any $D \in {\mathcal D}$.
	\end{proposition}
	
	\begin{proof}
	We write $Z' = Y' \cap Z$ for any closed subvariety $Z \subseteq Y$. Clearly, ${\mathcal S}'$ is an arrangement since $S_1'\cap S_2' = (S_1 \cap S_2)'$. Let $S \in {\mathcal S}$, and $G_1,\ldots,G_m$ the ${\mathcal G}$-factors of $S$. It is clear that $S' = G'_1 \cap \cdots \cap G'_m$ transversally. We also have to check that $S' \subseteq G'$ for given $G \in {\mathcal G}$ implies $G \supseteq G_i$ for some $i$. Indeed, the assumption above plus the transversality of $Y'$ with $G$ and $S$ implies $S \subseteq G$, so  $G \supseteq G_i$ for some $i$ by the definition of ${\mathcal G}$-factors. Hence, ${\mathcal G}'$ is a building set of ${\mathcal S}'$.
	
	The claim regarding the wonderful models amounts to the fact that everything in the iterative blowup construction of the wonderful models in \cite[\S2]{[Li09]} `commutes' with the suitable closed immersions, as shown below.
	
	\begin{lemma}\label{lemma: preparation for functoriality of wc}
		Let $F,S,Y'$ be smooth subvarieties of a smooth variety $Y$ such that $S \cap F$ is either smooth and connected, or empty, $S$ is not contained in $F$, and for any
		$ X \in \{F,S,S \cap F\} \backslash \{\emptyset\}$, the intersection of $Y'$ with $X$ is nonempty, connected, and transverse. Consider the blowup of $Y$ at $F$ with exceptional divisor $E$. 
		
		Then, $\widetilde{Y'}$ is isomorphic to the blowup of $Y'$ at $Y' \cap F$, and the intersection of $\widetilde{Y'}$ with $E$ is transverse. Furthermore,
		\begin{equation}\label{equation: commutativity with proper transforms for functoriality} \widetilde{Y' \cap S} = \widetilde{Y'} \cap \widetilde{S}, \end{equation} 
		where the dominant transform on the left hand side is taken relative to the blowup of $Y'$ at $Y' \cap F$, and, moreover, the intersection on the right hand side is transverse.
	\end{lemma}
	
	\begin{proof}
		The claims in the first sentence are clear, and so is everything if $Y' \cap S \cap F = \emptyset$. The most substantial claim is \eqref{equation: commutativity with proper transforms for functoriality}. The `$\subseteq$' inclusion is trivial. Let us compare the fibers of the two sides of \eqref{equation: commutativity with proper transforms for functoriality} over some $p \in Y' \cap S \cap F$. On one hand, the fiber of the left hand side is a projective space of dimension $\ell' = \dim S' - \dim (F' \cap S') - 1$, 
		using the notation $Z' = Y' \cap Z$. On the other hand, $E_p \subset \widetilde{Y'}$ since $Y'$ intersects $F$ transversally, so the fiber over $p$ of the right hand side is simply $\widetilde{S} \cap E_p$, which is a projective space of dimension $\ell = \dim S - \dim (F \cap S) - 1$. However, $\ell = \ell'$ by our transversality assumptions, so \eqref{equation: commutativity with proper transforms for functoriality} follows. The intersection on the right is smooth (therefore clean) by \eqref{equation: commutativity with proper transforms for functoriality}, and of the `expected dimension' by \eqref{equation: commutativity with proper transforms for functoriality} again, hence transverse. 
	\end{proof}
		
	In the situation of \cite[Proposition 2.8]{[Li09]} and of our proposition (so, with the additional data of $Y'$), we have $ \mathrm{Bl}_{F'} Y' = Y' \times_Y \mathrm{Bl}_F Y $ by Lemma \ref{lemma: preparation for functoriality of wc}, because $F$ and $Y'$ intersect transversally. Lemma \ref{lemma: preparation for functoriality of wc} and \cite[Proposition 2.8.(i)]{[Li09]} guarantee that the non-emptiness, connectivity, and transversality hypotheses on the intersection of the subvariety with all varieties in the arrangement continue to hold after blowing up $F$, so we may continue inductively.
	\end{proof}
	
	\section{Some facts related to blowing up} 
	
	\begin{proposition}\label{proposition: pushforward of ideals in blowup}
	Let $W$ be a smooth variety, $Y \subset W$ a smooth closed subvariety, and $\beta: W' \to W$ the blowup of $W$ at $Y$ with exceptional divisor $E \subset W'$. Then, $\beta^\# : {\sh O}_W \to \beta_*{\sh O}_{W'}$ is an isomorphism, and
	\begin{equation*}
		\beta^\#({\sh I}^k_{Y,W}) = \beta_*({\sh I}^k_{E,W'} )
	\end{equation*}
	as subsheaves of $\beta_*{\sh O}_{W'}$, for any integer $k \geq 0$.
	\end{proposition}

	\begin{proof}
		The formal proof will be omitted, but note that this is essentially the fact that, in local coordinates, if $Y=\{x_1=\cdots=x_r=0\} \subset W$ and $f$ is a function on $W$ defined locally, then the order of vanishing of $\beta^*f$ on $E$ is equal to the largest $k$ such that $\displaystyle \frac{\partial^{a_1+\cdots+a_r}f }{\partial x_1^{a_1}\cdots \partial x_r^{a_r}} = 0$ on $Y$ for all $a_1+\cdots+a_r \leq k-1$.
	\end{proof}
	
	\begin{corollary}\label{corollary: twisting for blowup} In the setup of Proposition \ref{proposition: pushforward of ideals in blowup}, let ${\sh L}$ be an invertible ${\sh O}_W$-module. Then, there exists a unique isomorphism
	\begin{equation*}
	{\sh L} \otimes {\sh I}_{Y,W} \simeq \beta_*(\beta^*{\sh L} \otimes {\sh I}_{E,W'} )
\end{equation*}
which restricts to the trivial one on $W \setminus Y$. Moreover, if $s$ is a local section of ${\sh L} \otimes {\sh I}_{Y,W}$ and $s'$ the corresponding section of $\beta^*{\sh L} \otimes {\sh I}_{E,W'}$ under the isomorphism above, then $s'|_E = 0$ if and only if $s$ belongs to ${\sh L} \otimes {\sh I}^2_{Y,W}$.
	\end{corollary}
	
	\begin{proof}
	Uniqueness is trivial, since $\mathrm{Aut}({\sh L} \otimes {\sh I}_{Y,W}) \to \mathrm{Aut}({\sh L}|_{W \backslash Y})$ is injective. For existence, working (Zariski) locally on $W$ (note that `gluing' follows from uniqueness), we may assume that ${\sh L} = {\sh O}_W$. The case $k = 1$ of Proposition \ref{proposition: pushforward of ideals in blowup} proves the desired isomorphism, while the case $k = 2$ proves the second claim.
	\end{proof}
	
	We have also used the following fact in the proof of Theorem \ref{theorem: polydiagonal degeneration}.
	
	\begin{lemma}\label{lemma: technical fact about dominant transforms}
		Let $\pi:W \to V$ be a smooth morphism between two smooth varieties, and $\sigma:V \to W$ a section of $\pi$. Let $R \subset V$ be a smooth closed subvariety, $V' = \mathrm{Bl}_R V$, and $W' = W \times_V V' = \mathrm{Bl}_{\pi^{-1}(R)}W$. Let $\pi':W' \to V'$ be the projection and $\sigma':V' \to W'$ the section of $\pi'$ induced by $\sigma$.
		
		If $A \subset V$ and $B \subset W$ are smooth closed subvarieties such that $\sigma(A) = B$, and $A' \subset V'$ and $B' \subset W'$ are the dominant transforms of $A$ and $B$, then $\sigma'(A') = B'$.
	\end{lemma}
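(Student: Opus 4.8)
The plan is to reduce everything to a local computation using the hypothesis $\sigma(A)=B$ together with the fact that $W = V \times_? \text{(fiber)}$ locally looks like a product because $\pi$ is smooth. First I would recall the definition of the dominant (strict/proper) transform: for a closed subvariety $A \subset V$, with $\mathrm{Bl}_R V \to V$, the dominant transform $A'$ is the closure of the preimage of $A \setminus R$ when $A \not\subseteq R$, and is the full preimage (i.e.\ $A' = A \times_V V'$, which in the smooth-center case is itself a blowup $\mathrm{Bl}_{A \cap R} A$) when $A \subseteq R$. The key structural point to exploit is that since $W' = W \times_V V'$, the map $\sigma' : V' \to W'$ is literally the base change of $\sigma$, i.e.\ $\sigma' = \sigma \times_V \mathrm{id}_{V'}$ under the identification $W' \cong W \times_V V'$; so $\sigma'(A') = (\sigma \times_V \mathrm{id})(A')$, and the content is just to track how dominant transforms interact with this base change.

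The main steps, in order: (1) Dispose of the case $A \subseteq R$ first, since it is the cleanest — here $A' = A \times_V V'$ and $B' = B \times_V V'$ as subschemes of $W'$, and $\sigma(A) = B$ combined with the fact that base change is functorial gives $\sigma'(A') = \sigma'(A \times_V V') = (\sigma \times_V \mathrm{id})(A \times_V V') = B \times_V V' = B'$ directly (noting $B \subseteq \pi^{-1}(R)$, the center being blown up in $W$, since $B = \sigma(A) \subseteq \sigma(\pi^{-1}(R)) \subseteq \pi^{-1}(R)$). (2) For the generic case $A \not\subseteq R$, argue that $\sigma(A) = B$ forces $B \not\subseteq \pi^{-1}(R)$ (otherwise $A = \pi(B) \subseteq R$), and then both dominant transforms are strict transforms: $A'$ is the closure of $(\beta_V)^{-1}(A \setminus R)$ and $B'$ of $(\beta_W)^{-1}(B \setminus \pi^{-1}(R))$, where $\beta_V, \beta_W$ are the blowup maps. (3) Over the complement of the exceptional loci we have the canonical identifications $V' \setminus E_V \cong V \setminus R$ and $W' \setminus E_W \cong W \setminus \pi^{-1}(R)$ compatible with $\sigma', \sigma$; under these, $\sigma'$ restricted to $V' \setminus E_V$ corresponds to $\sigma$ restricted to $V \setminus R$, so $\sigma'\big((\beta_V)^{-1}(A \setminus R)\big) = (\beta_W)^{-1}(\sigma(A \setminus R)) = (\beta_W)^{-1}((\sigma(A)) \setminus \pi^{-1}(R)) = (\beta_W)^{-1}(B \setminus \pi^{-1}(R))$, using $\sigma(A \cap R) \subseteq \sigma(A) \cap \pi^{-1}(R) = B \cap \pi^{-1}(R)$ and that $\sigma$ is an isomorphism onto its image. (4) Take closures: since $\sigma'$ is a closed immersion (a section of a separated morphism), it is in particular a closed map, so $\sigma'(\overline{(\beta_V)^{-1}(A\setminus R)}) = \overline{\sigma'((\beta_V)^{-1}(A \setminus R))} = \overline{(\beta_W)^{-1}(B \setminus \pi^{-1}(R))} = B'$, which is exactly $\sigma'(A') = B'$.

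The main obstacle I anticipate is purely bookkeeping rather than conceptual: one must make sure that the identification $W' = \mathrm{Bl}_{\pi^{-1}(R)} W = W \times_V \mathrm{Bl}_R V$ is valid, i.e.\ that $\pi^{-1}(R)$ is smooth and that blowing up commutes with the smooth (hence flat) base change $W \to V$ — this is standard (blowup commutes with flat base change, and smoothness of $\pi$ makes $\pi^{-1}(R)$ smooth of the expected codimension), but it needs to be invoked cleanly. A secondary subtlety is the interaction of ``dominant transform'' with the two different regimes ($A \subseteq R$ versus not), which is why I separate the two cases at the outset; Li's convention in \cite{[Li09]} is precisely that the dominant transform is the closure of the preimage of the open part in the second case and the total transform in the first, so both cases are handled by the same functoriality of $\times_V$ once one unwinds the definitions. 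No deep input is needed — the lemma is genuinely ``simple technical fact'' — so the write-up should be short once the base-change identification is in place.
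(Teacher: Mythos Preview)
Your argument is correct and complete, but it follows a genuinely different route from the paper's own proof. The paper dispatches the lemma in two lines: after unspecified ``reductions'' one may assume $W = V \times \mathbb{A}^m$ with $\pi$ the first projection and $\sigma(v) = (v,0,\ldots,0)$, and in that product situation the claim is declared clear. In other words, the paper exploits the local structure of a smooth morphism equipped with a section to trivialize everything, and then leaves the verification to the reader.

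By contrast, you never trivialize $\pi$; you argue globally via the base-change description $W' = W \times_V V'$, $\sigma' = \sigma \times_V \mathrm{id}_{V'}$, split into the two regimes of Li's dominant-transform convention, and in the generic case push closures through the closed immersion $\sigma'$. What your approach buys is explicitness: every step is written out, and nothing is hidden behind ``after some reductions'' or ``everything is clear.'' What the paper's approach buys is brevity and a conceptual picture (once $W = V \times \mathbb{A}^m$, the section is the zero section, $B = A \times \{0\}$, and both dominant transforms visibly match). One small cosmetic point: in your case (1), the parenthetical ``which in the smooth-center case is itself a blowup $\mathrm{Bl}_{A \cap R} A$'' is misplaced when $A \subseteq R$ (then $A \cap R = A$ and the blowup is empty); you should simply say $A' = A \times_V V'$ there and drop the parenthetical, which is only apt in the $A \not\subseteq R$ regime.
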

	
	Note that $\mathrm{Bl}_{\pi^{-1}(R)}W = W \times_V V'$ follows from $\pi^*{\sh I}_{R,V}^k = {\sh I}_{\pi^{-1}(R),W}^k$ for all $k \geq 0$ (since $\pi$ is smooth) and the fact that the relative Proj construction commutes with base change.
	
	\begin{proof}
		After some reductions using \cite[\href{https://stacks.math.columbia.edu/tag/054L}{Tag 054L}]{[SP]}, we may assume without loss of generality that $W = V \times {\mathbb A}^m$, $\pi$ is the projection to the first factor, and $\sigma(v) = (v,0,\ldots,0)$ for all $v \in V$. Everything is clear in this situation.
	\end{proof}
	
%	\end{appendices}

	\bigskip
	
	\subsection*{Competing interests} No competing interest is declared.
	
	\subsection*{Funding}
	
	We acknowledge the support of the Natural Sciences and Engineering Research Council of Canada (NSERC), RGPIN-2020-05497. Cette recherche a \'{e}t\'{e} financ\'{e}e par le Conseil de recherches en sciences naturelles et en g\'{e}nie du Canada (CRSNG), RGPIN-2020-05497.
	
	\subsection*{Acknowledgements} I guessed that the small resolution of $\overline{P}_n$ (which was starting to take shape at the time) is an augmented wonderful variety when I saw \cite{[Pr22]}. I would like to thank Joel Kamnitzer and Nicholas Proudfoot for asking, respectively answering, the question on mathoverflow. I would also like to thank Joel Kamnitzer for interesting discussions related to this topic. In particular, Remarks \ref{remark: relation to spaces 2} and \ref{remark: flower curves} are based on these discussions.
	
	I am indebted to the anonymous referees not only for comments which improved the quality of the paper, but also for extremely interesting questions which suggest further directions of investigation.
	
	\bibliographystyle{plain}
	\bibliography{smallres_refs}{}

@article{[Zi14],
	author = {F. Ziltener},
	date-added = {2025-06-15 14:22:24 +0300},
	date-modified = {2025-06-15 14:36:12 +0300},
	journal = {Mem. Amer. Math. Soc.},
	volume = {1082},
	pages = {129},
	title = {A Quantum Kirwan map: bubbling and Fredholm theory for symplectic vortices over the plane},
	year = {2014}}

@article{[Za21],
	author = {A. Zahariuc},
	date-added = {2025-06-15 14:19:07 +0300},
	date-modified = {2025-06-15 14:22:10 +0300},
	journal = {Ann. Sc. Norm. Super. Pisa Cl. Sci.},
	volume = {to appear},
	title = {Marked nodal curves with vector fields},
	year = {2025}}

@article{[Wo15],
	author = {C. Woodward},
	date-added = {2025-06-15 14:17:41 +0300},
	date-modified = {2025-06-15 14:18:53 +0300},
	journal = {Transform. Groups},
	pages = {881-920},
	title = {Quantum Kirwan morphism and Gromov-Witten invariants of quotients II},
	volume = {20},
	year = {2015}}

@article{[Ul02],
	author = {A. Ulyanov},
	date-added = {2025-06-15 14:15:15 +0300},
	date-modified = {2025-06-15 14:16:25 +0300},
	journal = {J. Algebraic Geom.},
	volume = {11},
	pages = {129-159},
	title = {Polydiagonal compactification of configuration spaces},
	year = {2002}}

@misc{[SP],
	author = {stacks project authors},
	date-added = {2025-06-15 14:10:59 +0300},
	date-modified = {2025-06-15 14:13:32 +0300},
	howpublished = {\url{https://stacks.math.columbia.edu}},
	title = {The Stacks project},
	year = {2025}}

@misc{[Pr22],
	author = {J. Kamnitzer and N. Proudfoot},
	date-added = {2025-06-15 13:56:46 +0300},
	date-modified = {2025-06-15 14:07:51 +0300},
	howpublished = {\url{https://mathoverflow.net/q/426165}},
	title = {A compactification of the space of points on the affine line},
	year = {2022}}

@article{[MW10],
	author = {S. Ma'u and C. Woodward},
	date-added = {2025-06-15 13:55:24 +0300},
	date-modified = {2025-06-15 14:03:02 +0300},
	journal = {Compos. Math.},
	volume = {146},
	pages = {1002-1028},
	title = {Geometric realizations of the multiplihedra},
	year = {2010}}

@book{[Ma87],
	author = {H. Matsumura},
	date-added = {2025-06-15 13:44:07 +0300},
	date-modified = {2025-06-15 13:54:29 +0300},
	number = {8},
	publisher = {Cambridge University Press},
	series = {Cambridge Stud. Adv. Math},
	title = {Commutative Ring Theory},
	year = {1989}}

@article{[LM00],
	author = {A. Losev and Y. Manin},
	date-added = {2025-06-15 13:40:59 +0300},
	date-modified = {2025-06-15 13:42:48 +0300},
	journal = {Michigan Math. J.},
	pages = {443-472},
	title = {New moduli spaces of pointed curves and pencils of flat connections},
	volume = {48},
	year = {2000}}

@article{[Li09],
	author = {L. Li},
	date-added = {2025-06-15 13:36:55 +0300},
	date-modified = {2025-06-15 13:38:15 +0300},
	journal = {Michigan Math. J.},
	pages = {535-563},
	title = {Wonderful compactification of an arrangement of subvarieties},
	volume = {58},
	year = {2009}}

@article{[LLPP22],
	author = {M. Larson and S. Li and S. Payne and N. Proudfoot},
	date-added = {2025-06-15 13:33:53 +0300},
	date-modified = {2025-06-15 13:35:54 +0300},
	journal = {Adv. Math.},
	pages = {Paper No. 109554},
	title = {K-rings of wonderful varieties and matroids},
	volume = {441},
	year = {2024}}

@misc{[KT99],
	author = {G. Kuperberg and D. Thurston},
	date-added = {2025-06-15 13:30:28 +0300},
	date-modified = {2025-06-15 13:32:11 +0300},
	howpublished = {\href{https://arxiv.org/abs/math/9912167}{arXiv:math/9912167v2}},
	title = {Perturbative 3-manifold invariants by cut-and-paste topology},
	year = {1999}}

@book{[KM98],
	author = {J. Koll\'{a}r and S. Mori},
	date-added = {2025-06-15 13:26:17 +0300},
	date-modified = {2025-06-15 13:30:14 +0300},
	publisher = {Cambridge Univ. Press},
	series = {Cambridge Tracts in Math.},
	title = {Birational geometry of algebraic varieties},
	volume = {134},
	year = {1998}}

@article{[Kn83],
	author = {F. Knudsen},
	date-added = {2025-06-15 13:24:45 +0300},
	date-modified = {2025-06-15 13:25:48 +0300},
	journal = {Math. Scan.},
	pages = {161-199},
	title = {The projectivity of the moduli space of stable curves II: the stacks $\overline{M}_{g,n}$},
	volume = {52},
	year = {1983}}

@article{[KKO14],
	author = {B. Kim and A. Kresch and Y.-G. Oh},
	date-added = {2025-06-15 13:07:15 +0300},
	date-modified = {2025-06-15 13:11:08 +0300},
	journal = {Trans. Amer. Math. Soc.},
	pages = {51-74},
	title = {A compactification of the space of maps from curves},
	volume = {366},
	year = {2014}}

@article{[Ka93],
	author = {M. Kapranov},
	date-added = {2025-06-15 12:50:49 +0300},
	date-modified = {2025-06-15 13:05:48 +0300},
	journal = {Adv. Soviet Math.},
	pages = {29-110},
	title = {Chow quotients of Grassmannians I},
	volume = {16},
	year = {1993}}

@article{[Ka94],
	author = {K. Kato},
	date-added = {2025-06-15 12:49:12 +0300},
	date-modified = {2025-06-15 12:50:09 +0300},
	journal = {Amer. J. Math.},
	pages = {1073--1099},
	title = {Toric Singularities},
	volume = {116},
	year = {1994}}

@misc{[IKLPR23],
	author = {A. Ilin and J. Kamnitzer and Y. Li and P. Przytycki and L. Rybnikov},
	date-added = {2025-06-15 12:46:37 +0300},
	date-modified = {2025-06-15 12:48:28 +0300},
	howpublished = {\href{https://arxiv.org/abs/2308.06880}{arXiv:2308.06880}},
	title = {The moduli space of cactus flower curves and the virtual cactus group},
	year = {2023}}

@article{[HW17],
	author = {J. Huh and B. Wang},
	date-added = {2025-06-15 12:42:21 +0300},
	date-modified = {2025-06-15 12:46:25 +0300},
	journal = {Acta Math.},
	volume = {218},
	pages = {297-317},
	title = {Enumeration of points, lines, planes, etc.},
	year = {2017}}

@article{[HT99],
	author = {B. Hassett and Y. Tschinkel},
	date-added = {2025-06-15 12:34:26 +0300},
	date-modified = {2025-06-15 12:43:47 +0300},
	journal = {Int. Math. Res. Not. IMRN},
	pages = {1211-1230},
	title = {Geometry of equivariant compactifications of $G_a^n$},
	volume = {22},
	year = {1999}}

@article{[EGAII],
	author = {A. Grothendieck},
	date-added = {2025-06-15 12:31:02 +0300},
	date-modified = {2025-06-15 12:34:06 +0300},
	journal = {Publ. Math. Inst. Hautes \'{E}tudes Sci.},
	title = {\'{E}l\'{e}ments de g\'{e}om\'{e}trie alg\'{e}brique: II. \'{E}tude globale \'{e}l\'{e}mentaire de quelques classes de morphismes},
	volume = {8},
	year = {1961}}

@article{[GM83],
	author = {M. Goresky and R. MacPherson},
	date-added = {2025-06-15 12:28:13 +0300},
	date-modified = {2025-06-15 12:29:36 +0300},
	journal = {Invent. Math.},
	pages = {77-129},
	title = {Intersection homology II},
	year = {1983}}

@inproceedings{[GSW18],
	author = {E. Gonz\'{a}lez and P. Solis and C. Woodward},
	booktitle = {Algebraic geometry: Salt Lake City 2015},
	date-added = {2025-06-15 12:23:36 +0300},
	date-modified = {2025-06-15 12:27:20 +0300},
	pages = {243-275},
	series = {Proc. Sympos. Pure Math.},
	title = {Stable gauged maps},
	year = {2018}}

@article{[GSW17],
	author = {E. Gonz\'{a}lez and P. Solis and C. Woodward},
	date-added = {2025-06-15 12:21:54 +0300},
	date-modified = {2025-06-15 12:23:22 +0300},
	journal = {J. Algebra},
	pages = {104-157},
	title = {Properness for scaled gauged maps},
	volume = {490},
	year = {2017}}

@article{[FM94],
	author = {W. Fulton and R. MacPherson},
	date-added = {2025-06-15 12:16:26 +0300},
	date-modified = {2025-06-15 12:19:20 +0300},
	journal = {Ann. Math.},
	pages = {183-225},
	title = {A compactification of configuration spaces},
	volume = {139},
	year = {1994}}

@article{[EHL22],
	author = {C. Eur and J. Huh and M.\ Larson},
	date-added = {2025-06-15 12:13:11 +0300},
	date-modified = {2025-06-15 12:16:11 +0300},
	journal = {Forum Math. Pi},
	title = {Stellahedral geometry of matroids},
	volume = {11:e24},
	year = {2023}}

@article{[DP95],
	author = {C. De Concini and C. Procesi},
	date-added = {2025-06-15 11:44:47 +0300},
	date-modified = {2025-06-15 11:46:53 +0300},
	journal = {Selecta Math. (N.S.)},
	pages = {459-494},
	title = {Wonderful models of subspace arrangements},
	volume = {1},
	year = {1995}}

@article{[CGK09],
	author = {L. Chen and A. Gibney and D. Krashen},
	date-added = {2025-06-15 11:42:46 +0300},
	date-modified = {2025-06-15 11:44:29 +0300},
	journal = {J. Algebraic Geom.},
	pages = {477-509},
	title = {Pointed trees of projective spaces},
	volume = {18},
	year = {2009}}

@misc{[BHMPW20b],
	author = {T. Braden and J. Huh and J. Matherne and N. Proudfoot and B. Wang},
	date-added = {2025-06-15 11:38:23 +0300},
	date-modified = {2025-06-15 11:42:23 +0300},
	howpublished = {\href{https://arxiv.org/abs/2010.06088}{arXiv:2010.06088}},
	rating = {1},
	title = {Singular Hodge theory for combinatorial geometries},
	year = {2020}}

@article{[BHMPW20a],
	author = {T. Braden and J. Huh and J. Matherne and N. Proudfoot and B. Wang},
	date-added = {2025-06-15 11:36:41 +0300},
	date-modified = {2025-06-15 11:37:55 +0300},
	journal = {Adv. Math.},
	volume = {Paper No. 108646},
	title = {A semi-small decomposition of the Chow ring of a matroid},
	year = {2022}}

@article{[BM96],
	author = {K. Behrend and Y. Manin},
	date-added = {2025-06-15 11:34:41 +0300},
	date-modified = {2025-06-15 11:35:47 +0300},
	journal = {Duke Math. J.},
	pages = {1-60},
	title = {Stacks of stable maps and Gromov-Witten invariants},
	volume = {85},
	year = {1996}}

@book{[ACG10],
	author = {E. Arbarello and M. Cornalba and Ph. Griffiths},
	date-added = {2025-06-15 11:28:04 +0300},
	date-modified = {2025-06-15 11:33:52 +0300},
	publisher = {Springer},
	series = {Grund. Math. Wiss.},
	title = {Geometry of algebraic curves, vol. II with a contribution by J. Harris},
	volume = {268},
	year = {2010}}

\end{document}